\documentclass[10pt,a4paper,reqno]{amsart}
\usepackage{amssymb,amsmath,amsthm,amstext,amsfonts}

\usepackage{mathrsfs}

\usepackage{enumitem}

\usepackage{hyperref}
\RequirePackage[dvipsnames]{xcolor} 
\definecolor{halfgray}{gray}{0.55}
\definecolor{webgreen}{rgb}{0,0.5,0}
\definecolor{webbrown}{rgb}{.6,0,0} \hypersetup{%
	colorlinks=true, linktocpage=true, pdfstartpage=3,
	pdfstartview=FitV,%
	breaklinks=true, pdfpagemode=UseNone, pageanchor=true,
	pdfpagemode=UseOutlines,%
	plainpages=false, bookmarksnumbered, bookmarksopen=true,
	bookmarksopenlevel=1,%
	hypertexnames=true,
	pdfhighlight=/O,
	urlcolor=webbrown, linkcolor=RoyalBlue,
	citecolor=webgreen, 
	pdftitle={},%
	pdfauthor={},%
	pdfsubject={2020 Mathematics Subject Classification},%
	pdfkeywords={},%
	pdfcreator={pdfLaTeX},%
	pdfproducer={LaTeX with hyperref}%
}

\newtheorem{theorem}{Theorem}[section]
\newtheorem{lemma}[theorem]{Lemma}
\newtheorem{corollary}[theorem]{Corollary}
\newtheorem{proposition}[theorem]{Proposition}
\theoremstyle{definition}
\newtheorem{definition}[theorem]{Definition}
\newtheorem{remark}[theorem]{Remark}
\numberwithin{equation}{section}
\numberwithin{figure}{section}

\newcommand{\R}{\mathbb R}
\newcommand{\C}{\mathbb C}
\newcommand{\GL}{\operatorname{GL}}
\newcommand{\Gr}{\operatorname{Gr}}
\newcommand{\Id}{\operatorname{Id}}
\newcommand{\Span}{\operatorname{span}}
\newcommand{\Fix}{\operatorname{Fix}}

\newcommand{\bol}{\operatorname{bol}}
\newcommand{\supp}{\operatorname{supp}}
\newcommand{\dist}{\operatorname{dist}}
\newcommand{\norm}[1]{\left\lVert #1\right\rVert}
\newcommand{\abs}[1]{\left|#1\right|}
\newcommand{\cF}{\mathcal F}
\newcommand{\cH}{\mathcal H}

\usepackage{orcidlink}

\title[Lyapunov exponents and dominated splittings]{From a Gap in the Lyapunov Spectrum to Dominated Splittings}

\author[Lucas Backes]{Lucas Backes \orcidlink{0000-0003-3275-1311}}
\address{\noindent Departamento de Matem\'atica, Universidade Federal do Rio Grande do Sul, Av. Bento Gon\c{c}alves 9500, CEP 91509-900, Porto Alegre, RS, Brazil.}
\email{lucas.backes@ufrgs.br}

\date{}
\subjclass[2020]{37H15, 37D30}
\keywords{linear cocycles, dominated splitting, gap in the Lyapunov spectrum, holonomies, fiber bunching}

\begin{document}
\begin{abstract}
Let $A$ be an $\alpha$-H\"older cocycle over a transitive two-sided subshift of finite type, and assume that $A$ is strongly bunched. We prove that if there exist $k\in\{1,\ldots,d-1\}$ and $c>0$ such that
\[
 \lambda_k(p)-\lambda_{k+1}(p)\ge c
\]
for every periodic point $p$, then $A$ admits a dominated splitting of index $k$.
Thus, under strong bunching, a uniform asymptotic gap in the Lyapunov spectrum of periodic measures implies a uniform geometric splitting along all orbit segments. This extends the periodic-gap criterion of Kassel--Potrie from locally constant cocycles to H\"older cocycles in arbitrary dimension, and extends the two-dimensional fiber-bunched result of Velozo to higher dimensions under the stronger bunching assumption.
\end{abstract}
\maketitle

\section{Introduction}\label{sec:introduction}
Let $f:X\to X$ be a homeomorphism of a compact metric space and let
$A:X\to\GL(d,\R)$ be a linear cocycle. We say that $A$
admits a \emph{dominated splitting of index $k$} if there is a continuous
$A$-invariant decomposition
\[
 \R^d=E(x)\oplus F(x)
 \; \text{ with }\;
 \dim E(x)=k,
\]
such that the ``greatest expansion'' of $A(x)$ along $F(x)$ is smaller than the ``greatest contraction'' of $A(x)$ along $E(x)$ by a factor that becomes exponentially small as time evolves. More precisely, there are constants $C>0$ and $\tau\in(0,1)$ such that
\[
 \frac{\|A^n(x)|_{F(x)}\|}
      {m(A^n(x)|_{E(x)})}
 \le C\tau^n
\]
for every $x\in X$ and every $n\ge1$, where $m(\cdot)$ denotes the conorm. Bochi and Gourmelon \cite{BochiGourmelon} provided an alternative characterization of this property showing that it is equivalent to the existence of a uniform exponential separation between the $k$-th and $(k+1)$-st singular values of $A^n(x)$. This uniform separation has an immediate asymptotic consequence. Indeed, if $A$ admits a dominated splitting of index $k$, then there exists $c>0$ such that
\[
 \lambda_k(\mu)-\lambda_{k+1}(\mu)\ge c
\]
for every ergodic invariant probability measure $\mu$ where $\lambda_j(\mu)$ denotes the $j$-th Lyapunov exponent of $A$ with respect to $\mu$ (see \cite[Fact 2.5]{KasselPotrie}). Thus domination, which is a uniform geometric property along all orbit segments, forces a uniform gap in the corresponding Lyapunov exponents, which are asymptotic quantities defined almost everywhere.

The present paper is motivated by the converse question: under what conditions
does a uniform gap between Lyapunov exponents imply the existence of a dominated
splitting? In general, such a converse is false. Indeed, Velozo
\cite{Velozo} constructed a H\"older $SL(2,\R)$-cocycle over a subshift of
finite type whose Lyapunov exponents are uniformly separated on a set of full
probability, but which is not uniformly hyperbolic. Since, in dimension two,
uniform hyperbolicity is equivalent to the existence of a dominated splitting
of index one, this example shows that a uniform Lyapunov gap alone is not enough
to imply domination. Therefore, some additional hypothesis is needed in order
to obtain a positive result.

As in Velozo's setting, we take the base dynamics to be a subshift of finite
type and impose an additional condition linking the fiber dynamics to the
hyperbolicity of the base. Our main result shows that \emph{strong bunching} is
sufficient for this purpose in arbitrary dimension. More precisely, let $A$ be an $\alpha$-H\"older cocycle over a transitive
two-sided subshift of finite type, and assume that $A$ is strongly bunched in
the sense of Bochi--Garibaldi \cite{BochiGaribaldi} (see Section \ref{sec:fiber-bunching}). We prove that if there exist $k\in\{1,\ldots,d-1\}$ and $c>0$ such that
\[ \lambda_k(p)-\lambda_{k+1}(p)\ge c\]
for every periodic point $p$, then $A$ admits a dominated splitting of index
$k$.
Thus, the theorem turns asymptotic information about the Lyapunov spectrum associated to periodic measures into a uniform geometric property valid along every orbit segment. This is the central point of our result. As a simple consequence we get that, if $A$ is strongly bunched and has Lyapunov spectrum bounded away from zero in the sense that $\lambda_k(p)\geq c>0>-c\geq \lambda_{k+1}(p)$
for every periodic point $p$, then $A$ is uniformly hyperbolic.

The relation between gaps in the Lyapunov spectra and dominated
splittings has already been studied in the literature.
In \emph{dimension two}, Velozo proved that a fiber-bunched $SL(2,\R)$-cocycle over a
transitive subshift of finite type or a transitive Anosov diffeomorphism is
uniformly hyperbolic whenever its Lyapunov exponents have a uniform gap
\cite{Velozo}. In arbitrary dimension, Kassel and Potrie
\cite{KasselPotrie} proved that, for \emph{locally constant} cocycles over subshifts
of finite type, a uniform gap between the $k$-th and $(k+1)$-st Lyapunov
exponents of periodic measures forces a dominated splitting of index $k$.
For \emph{typical} fiber-bunched cocycles over subshifts of finite type, Park
\cite{ParkProximal} proved that a uniform gap between two consecutive
Lyapunov exponents at all periodic points likewise implies a dominated
splitting of the corresponding index.
DeWitt and Gogolev \cite{DeWittGogolev} obtained another periodic-data
criterion: they proved that a finite-dimensional cocycle over a hyperbolic
system admits the dominated splitting indicated by its periodic data when the
periodic data are constant, and also when they are sufficiently close to
constant. More recently, in \cite{BackesDominated} we have proved an analogous result for quasi-compact operator cocycles on Banach spaces, allowing
infinite-dimensional and possibly non-invertible fiber dynamics.

Note that our result differs from the results of DeWitt--Gogolev and also from our previous work
in that we do not assume that the periodic spectra are constant, or close to
a fixed spectrum. We only assume a uniform gap at one prescribed index.
Our theorem may therefore be viewed as extending the periodic-gap criterion
of Kassel--Potrie from locally constant cocycles to H\"older cocycles in
arbitrary dimension under the strong bunching assumption, without imposing
the typicality hypothesis appearing in Park's result. It also extends the
two-dimensional fiber-bunched result of Velozo to higher-dimensional
cocycles, although with the stronger bunching condition required by our
argument. In a different direction, Yemini \cite{Yemini} proved that if a
gap between two consecutive Lyapunov exponents persists uniformly in a
$C^0$-neighborhood of a cocycle, then the cocycle admits a dominated
splitting of the corresponding index.

\subsection{Why are we interested in dominated splittings?}
Uniform hyperbolicity is one of the most basic notions in the theory of dynamical systems. It provides a robust geometric structure and strong control of the
asymptotic behavior of systems. At the same time, uniform hyperbolicity is
too restrictive to describe many dynamical phenomena that arise naturally.
This led to the development of weaker forms of hyperbolic behavior which still
retain part of the uniform geometry of the hyperbolic setting. Among them,
dominated splittings have become especially important. The notion was developed by Ma\~n\'e in connection with the $C^1$ Stability Conjecture
\cite{ManeStability}, and has since played a fundamental role in the study of
nonuniformly hyperbolic and partially hyperbolic systems.

Dominated splittings are a basic tool in smooth
dynamics because they provide a robust form of separation between different
directions of the tangent dynamics. For the derivative cocycle of a
diffeomorphism, such splittings persist under $C^1$ perturbations and form the
geometric framework behind many notions of partial hyperbolicity. They occur
naturally in the study of robust transitivity, structural stability, generic
dynamics, and the organization of nonhyperbolic invariant sets. On surfaces,
for example, Pujals and Sambarino obtained a detailed description of the
dynamics of invariant sets carrying a dominated splitting
\cite{PujalsSambarino}. More generally, domination is strong enough to yield
uniform geometric information, while remaining flexible enough to occur far
beyond the uniformly hyperbolic setting. For this reason, criteria that allow
one to detect domination from asymptotic or periodic information are useful
both for linear cocycles and for smooth dynamical systems.

\section{Statements}\label{sec:definitions}

In this section, we fix the setting and notation and state the main theorem.

\subsection{Subshifts of finite type}

Let $Q=(q_{ij})_{1\le i,j\le \ell}$ be an $\ell\times\ell$ matrix with
$q_{ij}\in\{0,1\}$. The \emph{subshift of finite type} associated to the matrix $Q$ is the subset of the bi-infinite sequences $\{1,\dots, \ell\}^{\mathbb{Z}}$ satisfying
\[
 \widehat\Sigma=
 \{(x_n)_{n\in\mathbb Z}:q_{x_nx_{n+1}}=1\text{ for every }n\in\mathbb Z\}.
\]
We require that each row and column of $Q$ contains at least one nonzero entry. We let $ \hat f:\widehat\Sigma\to\widehat\Sigma$ be the left-shift map defined by $\hat f(x_n)_{n\in\mathbb Z}=(x_{n+1})_{n\in\mathbb Z}$ and assume that $\hat f$ is topologically transitive.

We will also use the one-sided spaces
\[
 \Sigma^u=\{(x_n)_{n\ge0}:q_{x_nx_{n+1}}=1\} \; \text{ and }\;
 \Sigma^s=\{(x_n)_{n\le0}:q_{x_nx_{n+1}}=1\}.
\]
Let $P^u:\widehat\Sigma\to\Sigma^u$ and
$P^s:\widehat\Sigma\to\Sigma^s$ be the natural projections and $f_{s}$ and $f_{u}$ denote the right and left shifts on $\Sigma^{s}$ and $\Sigma^{u}$, respectively. When only the left one-sided shift is used we write
\[
 \Sigma:=\Sigma^u,\quad P:=P^u:\widehat\Sigma\to\Sigma \; \text{ and }\; f:\Sigma\to\Sigma
\]
for the corresponding left shift.  Thus hatted symbols always refer to the
two-sided system and unhatted symbols to the one-sided left shift.

We define the \emph{local stable and unstable sets} of $\hat x\in\widehat\Sigma$ by
\[
 W^s_{\rm loc}(\hat x)=
 \{\hat y\in\widehat\Sigma:x_n=y_n\text{ for every }n\ge0\}
\]
and
\[
 W^u_{\rm loc}(\hat x)=
 \{\hat y\in\widehat\Sigma:x_n=y_n\text{ for every }n\le0\}.
\]
Equivalently, for $x\in\Sigma$ and $x^-\in\Sigma^s$ we write
\[
 W^s_{\rm loc}(x):=P^{-1}(x)\; \text{ and }\;
 W^u_{\rm loc}(x^-):=(P^s)^{-1}(x^-).
\]
Thus the fibers of $P$ are precisely the local stable sets.  For
$0<\theta<1$ we define the metric
\[
 d_\theta(\hat x,\hat y)=\theta^{N(\hat x,\hat y)} \; \text{ where }
 N(\hat x,\hat y)=\max\{N\ge0:x_n=y_n\text{ for all }|n|<N\}.
\]
If $\hat x$ and $\hat y$ have the same zero-coordinate, we write
\begin{equation}\label{eq:bracket-conv}
 [\hat x,\hat y]\in W^u_{\rm loc}(\hat x)\cap W^s_{\rm loc}(\hat y)
\end{equation}
for the point having the past of $\hat x$ and the future of $\hat y$.

Finally, for $m \in \mathbb{Z}$ and $a_{0},\dots,a_{k} \in \{1,\dots,\ell\}$, we define the cylinder notation
\[
[m;a_{0},\dots,a_{k}] = \{\hat{x} \in \widehat{\Sigma}: x_{m+i} = a_i, \, 0 \leq i \leq k\}.
\]

\subsection{Cocycles and Lyapunov exponents}

A $d$-dimensional \emph{linear cocycle over $\hat f$} is simply a map
$\hat A:\widehat\Sigma\to\GL(d,\R)$.  Its iterates are given by
\begin{equation*}\label{eq:cocycle-iterates}
 \hat A^n(\hat x)=
 \begin{cases}
  \hat A(\hat f^{n-1}\hat x)\cdots
  \hat A(\hat f\hat x)\hat A(\hat x),&n>0,\\
  \Id,&n=0,\\
  \bigl(\hat A^{-n}(\hat f^n\hat x)\bigr)^{-1},&n<0.
 \end{cases}
\end{equation*}

Let $\hat\mu$ be an ergodic $\hat f$-invariant probability measure on
$\widehat\Sigma$. Assuming that $\log^+\|\hat A\|$ and $\log^+\|\hat A^{-1}\|$ belong to $L^1(\hat\mu)$, Oseledets' theorem (see \cite{VianaLectures}) gives an integer $r_{\hat\mu}\in\{1,\ldots,d\}$ and distinct real numbers
\[
 \chi_1(\hat\mu)>\cdots>\chi_{r_{\hat\mu}}(\hat\mu),
\]
called the \emph{Lyapunov exponents}, and, for $\hat\mu$-almost every
$\hat x$, a decomposition
\[
 \R^d
 =
 E^1_{\hat x}\oplus\cdots\oplus E^{r_{\hat\mu}}_{\hat x},
\]
called the \emph{Oseledets splitting}. The subspaces
$E^i_{\hat x}$ depend measurably on $\hat x$ and satisfy
\[
 \hat A(\hat x)E^i_{\hat x}
 =
 E^i_{\hat f(\hat x)}.
\]
Moreover, for every nonzero $v\in E^i_{\hat x}$,
\[
 \lim_{n\to\pm\infty}
 \frac{1}{n}
 \log\|\hat A^n(\hat x)v\|
 =
 \chi_i(\hat\mu).
\]
In what follows, it will be convenient to write the \emph{Lyapunov spectrum} according to multiplicity as
\[
 \lambda_1(\hat\mu)\ge\cdots\ge\lambda_d(\hat\mu),
\]
where each $\chi_i(\hat\mu)$ is repeated
$\dim E^i_{\hat x}$ times.

We recall that, if $\hat p\in\widehat\Sigma$ is a periodic point of period $n$, then
\[
 \hat\mu_{\hat p}
 =
 \frac{1}{n}
 \sum_{j=0}^{n-1}
 \delta_{\hat f^j(\hat p)}
\]
is an ergodic $\hat f$-invariant probability measure supported on the
orbit of $\hat p$. Then, denoting by $\rho_1,\ldots,\rho_d$ the eigenvalues of
$\hat A^n(\hat p)$, repeated according to their algebraic multiplicities
and ordered so that $|\rho_1|\ge\cdots\ge|\rho_d|$, we have that the Lyapunov spectrum of $\hat\mu_{\hat p}$ is given simply by
\begin{equation*}\label{eq:periodic-exponents}
 \lambda_i(\hat p)
 :=
 \lambda_i(\hat\mu_{\hat p})
 =
 \frac{1}{n}\log|\rho_i|
 \; \text{ for }
 1\le i\le d.
\end{equation*}

\subsection{Fiber bunching and strong bunching}\label{sec:fiber-bunching}
Given an invertible linear map $L$, we write $m(L)=\norm{L^{-1}}^{-1}$ for the \emph{conorm} of $L$ and, following \cite{BochiGaribaldi}, we define its  \emph{bolicity} as
\[
 \bol(L)=\norm L\,\norm{L^{-1}}.
\]
Note that $\bol(L)$ measures the lack of conformality of $L$.
Then, we say that an $\alpha$-H\"older cocycle $\hat A$ is \emph{$\alpha$-fiber-bunched} if there exists $N\ge1$ such that
\begin{equation*}\label{eq:fiber-bunching}
 \bol(\hat A^N(\hat x))\,\theta^{N\alpha}<1
 \;\text{ for every }\hat x\in\widehat\Sigma.
\end{equation*}
Moreover, we will also use a notion of strong bunching which comes from \cite{BochiGaribaldi}, specialized to the
standard metric $d_\theta$ on a subshift of finite type.

\begin{definition}\label{def:strong-bunching}
An $\alpha$-H\"older cocycle
$\hat A:\widehat\Sigma\to\GL(d,\R)$ is \emph{strongly bunched} if
\begin{enumerate}[label=\textup{(\roman*)}]
\item when $d=2$, $\hat A$ is fiber-bunched;
\item when $d\ge3$, there exists $N\ge1$ such that
\begin{equation}\label{eq:strong-bunching}
 \bol(\hat A^N(\hat x))\,\theta^{N\alpha/3}<1
 \;\text{ for every }\hat x\in\widehat\Sigma.
\end{equation}
\end{enumerate}
\end{definition}

\begin{remark}
We observe that our notion of strong bunching is the eventual form of the strong bunching condition of Bochi--Garibaldi, specialized to the symbolic metric used here. Their notation
uses $\theta$ for the H\"older exponent, whereas here the H\"older exponent is
denoted by $\alpha$ and $\theta\in(0,1)$ denotes the parameter in the metric
$d_\theta$. For this metric, the stable and unstable contraction factors are
both $\theta$, so the corresponding hyperbolicity exponents in the notation of
\cite{BochiGaribaldi} may be taken to be
\[
 \lambda_s=\lambda_u=\Lambda_u=-\log\theta.
\]
It then follows from the explicit choice in
\cite[Lemma~3.6 and formula~(A.24)]{BochiGaribaldi} that we may take
\[
 \eta_0=\frac{\alpha}{3}.
\]
Thus the $(\eta_0,\alpha)$-bunching condition of Bochi--Garibaldi becomes \eqref{eq:strong-bunching}, after passing to an iterate. The denominator $3$ is therefore the one supplied by the cited estimate. 
\end{remark}

\subsection{Dominated splittings}
We say that the cocycle $\hat A$ has a \emph{dominated splitting of index $k$} if there is a continuous $\hat A$-invariant decomposition
\[
 \R^d=E(\hat x)\oplus F(\hat x) \; \text{ with }\; \dim E(\hat x)=k,
\]
and constants $C>0$ and $0<\tau<1$ such that
\[
 \frac{\norm{\hat A^n(\hat x)|_{F(\hat x)}}}
 {m(\hat A^n(\hat x)|_{E(\hat x)})}
 \le C\tau^n
 \; \text{ for every }\hat x\in\widehat\Sigma \text{ and }n\ge0.
\]

\subsection{Main result} The following is the main result of this paper.

\begin{theorem}\label{thm:main}
Let $\hat f:\widehat\Sigma\to\widehat\Sigma$ be a transitive two-sided
subshift of finite type and let
$\hat A:\widehat\Sigma\to\GL(d,\R)$ be an $\alpha$-H\"older cocycle which
is strongly bunched in the sense of Definition~\ref{def:strong-bunching}.  Fix
$k\in\{1,\ldots,d-1\}$.  Assume that there exists $c>0$ such that
\begin{equation}\label{eq:periodic-gap}
 \lambda_k(\hat p)-\lambda_{k+1}(\hat p)\ge c
\end{equation}
for every periodic point $\hat p$.  Then $\hat A$ admits a dominated splitting
of index $k$.
\end{theorem}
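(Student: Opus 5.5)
We argue by contradiction, using the Bochi--Gourmelon characterization. By \cite{BochiGourmelon}, $\hat A$ admits a dominated splitting of index $k$ if and only if there are $C>0$ and $\tau\in(0,1)$ with
\[
\sigma_{k+1}(\hat A^n(\hat x))/\sigma_k(\hat A^n(\hat x))\le C\tau^n
\]
for all $\hat x$ and $n\ge0$, where $\sigma_j$ denotes the $j$-th singular value. Equivalently, the exterior power cocycle $\Lambda^k\hat A$ has a dominated splitting of index $1$. So we pass to $\Lambda^k\hat A$. Its top two periodic exponents are $\lambda_1+\dots+\lambda_k$ and $\lambda_1+\dots+\lambda_{k-1}+\lambda_{k+1}$, so \eqref{eq:periodic-gap} gives a gap $c$ between its first and second periodic exponents at every periodic point.

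The bunching hypothesis is the reason for working with the strong version. Strong bunching of $\hat A$ is designed, following \cite{BochiGaribaldi}, so that the relevant auxiliary cocycles are still fiber-bunched, and in particular so that stable and unstable holonomies $H^{s}_{\hat x\hat y}$, $H^u_{\hat x\hat y}$ exist and are H\"older (for $\Lambda^k\hat A$, or for the action on the relevant Grassmannians). We first establish this holonomy package. Then we show that the domination estimate fails only if, for every $\epsilon>0$ and large $n$, there is an orbit segment $\hat x,\dots,\hat f^n\hat x$ with
\[
\sigma_{k+1}(\hat A^n(\hat x))/\sigma_k(\hat A^n(\hat x))\ge e^{-\epsilon n}.
\]
By a standard compactness/ergodic-theoretic step (semi-uniform subadditive ergodic theorem applied to $\log\sigma_1(\Lambda^k\hat A^n)-\log\sigma_2(\Lambda^k\hat A^n)$, or directly the Bochi--Gourmelon argument), failure of domination yields an ergodic measure $\hat\mu$ with $\lambda_k(\hat\mu)=\lambda_{k+1}(\hat\mu)$. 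Alternatively, it yields long orbit segments along which the $k$-th and $(k+1)$-st singular values stay subexponentially close at all intermediate times.

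The core step is to transfer this to periodic points: we close such orbit segments into periodic orbits by the specification/closing property of the transitive SFT, and then control $\hat A^{n+m}(\hat p)$. In general, closing does not preserve singular-value information, because the eigenvalue gap of a product can differ wildly from its singular-value gap. This is where holonomies and strong bunching enter, as in Kalinin's periodic approximation and Bochi--Garibaldi's extremal norms. Using holonomies we compare $\hat A^{n}(\hat p)$ with $H\circ\hat A^n(\hat x)\circ H'$, with distortion bounded independently of $n$. We then use a bounded connecting word, chosen among finitely many options, to make the resulting product ``$k$-proximal-generic'': the top $k$-plane of the image is in general position relative to the bottom $(d-k)$-plane at the start. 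For such products the $k$-th eigenvalue gap is comparable, up to uniform constants, to the singular-value gap. This produces a periodic $\hat p$ of period about $n$ with
\[
\lambda_k(\hat p)-\lambda_{k+1}(\hat p)\le \epsilon+O(1/n)<c,
\]
which is a contradiction.

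The main obstacle is this transversality step: guaranteeing that some connecting word makes the closed-up product have its eigenvalue gap controlled by its singular-value gap. Without typicality (Park's assumption) we cannot pick a pinching/twisting word. The plan is therefore to use the quasi-conformal control supplied by strong bunching, via the invariant Finsler/extremal-norm structure of \cite{BochiGaribaldi}, to compare all connecting choices. If every connecting word put the image $k$-plane uniformly close to the kernel directions, we would build from the holonomies a continuous invariant family of planes that already forces domination. So either we obtain a good closing or we obtain domination directly.
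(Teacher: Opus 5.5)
Your plan has a genuine gap at the step you yourself flag as the main obstacle, and the mechanism you propose for it cannot work. The first half is sound in spirit. Closing a bad segment by a holonomy path and comparing eigenvalues with singular values is what the paper does in Lemma~\ref{lem:two-boundary} and Proposition~\ref{prop:all-path}. Two corrections are needed there, though.
First, since $\sigma_k(D_j)/\sigma_{k+1}(D_j)=e^{o(N_j)}$, the fast $k$-plane is not asymptotically defined. Transversality must therefore be imposed at the genuine boundaries $r<k<s$ of the maximal plateau of normalized singular values. A concavity argument on partial sums of exponents then forces the eigenvalue gap at $k$ to be subexponential.
Second, without typicality no finite stock of bounded connecting words is guaranteed to work, so \emph{all} holonomy paths must be considered.

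The real content of the theorem is the case where every closing path is non-transverse. Your sentence that one would then build an invariant family of planes that ``already forces domination'' has no argument behind it. An invariant, even holonomy-invariant, family of subspaces does not imply domination. In the paper this case leads to a contradiction, not to domination, and that requires three ingredients absent from your proposal.
(1) A $u$-state supported on the closure $\mathscr K$ of the limiting flag $(F_r,F_s)$ under forward dynamics and unstable holonomies. Via a maximal-linear-section argument, its one-sided conditionals yield a finite continuous arrangement $\{W_i\}$ in $\Lambda^q\R^d$, one of whose members satisfies $W_{\hat p}\subset\ker\ell_q$.
(2) H\"older-approximate unstable invariance of this arrangement (Proposition~\ref{prop:theta-s-arrangement}), upgraded to exact invariance using $\bol(\hat A^n)\theta^{n\theta_s}\to0$ together with the orbit-lifting Lemma~\ref{lem:orbit-lifting}. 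This is the only place strong bunching enters; it is not an extremal-norm comparison of connecting words.
(3) The algebraic reduction of Proposition~\ref{prop:linear-reduction}, which semisimplifies the based-loop group and removes finite monodromy. It must be performed before the bad blocks are chosen.
The final contradiction is that $\Lambda^q(G_jD_j)/\norm{\Lambda^q(G_jD_j)}\to u\otimes\ell_q$ with $0\ne u\in W_{\hat p}\subset\ker\ell_q$, while every based loop fixes $W_{\hat p}$ and an invariant complement. Without (3) there is no contradiction at all. The nilpotent limit $u\otimes\ell_q$ maps everything into $W_{\hat p}$, so it is compatible with block upper triangular loops preserving $W_{\hat p}$.

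Separately, your claim that failure of domination yields an ergodic $\hat\mu$ with $\lambda_k(\hat\mu)=\lambda_{k+1}(\hat\mu)$ is false. The function $\log\sigma_1(\Lambda^k\hat A^n)-\log\sigma_2(\Lambda^k\hat A^n)$ is a difference of subadditive sequences, so the semi-uniform subadditive ergodic theorem does not apply to it. Velozo's example \cite{Velozo}, recalled in the introduction, is a non-dominated H\"older cocycle whose exponents are nonetheless uniformly separated. Moreover, combining your claim with Kalinin's approximation of all exponents of an ergodic measure by periodic ones \cite[Theorem~1.4]{Kal}, which needs no bunching, would prove the theorem with no bunching hypothesis at all. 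The argument must stay at the level of finite bad orbit segments, as in the paper.

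A smaller point: the holonomies of $\Lambda^k\hat A$ are just $\Lambda^kH^s$ and $\Lambda^kH^u$. They exist under ordinary fiber bunching of $\hat A$, so strong bunching is not what provides them.
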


For an explanation of the role of the strong bunching hypothesis in our proof, we refer the reader to Remark \ref{rem:role-strong-bunching} below.

\begin{remark}
    It was observed in \cite[Example 3.8]{KasselPotrie} that the uniformity given by the constant $c$ in \eqref{eq:periodic-gap} is really necessary to get a dominated splitting even in the locally constant setting. More precisely, they constructed a locally constant cocycle $\hat A:\widehat \Sigma \to \GL(3,\R)$ over the full shift on  two symbols satisfying $\lambda_2(\hat p)>\lambda_3(\hat p)$ for every periodic point $\hat p$ which does not admit a dominated splitting.
\end{remark}

As a simple consequence of our main result we have the following.
\begin{corollary}\label{cor:corollary-hyperbolic}
    Let $\hat f$, $\hat A$ and $k$ be as in Theorem \ref{thm:main}. Moreover, suppose there exists $c>0$ such that
    \begin{equation}\label{eq:periodic-gap-corollary}
 \lambda_k(\hat p)\geq c>0>-c\geq \lambda_{k+1}(\hat p)
\end{equation}
for every periodic point $\hat p$.  Then $\hat A$ is uniformly hyperbolic.
\end{corollary}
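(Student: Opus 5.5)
The statement to prove is Corollary~\ref{cor:corollary-hyperbolic}. I will derive it directly from Theorem~\ref{thm:main}, using the Bochi--Gourmelon style fact that domination forces uniform gaps for all invariant measures, together with the periodic approximation of Lyapunov exponents available for fiber-bunched cocycles.

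\textbf{Step 1: get a dominated splitting.} Hypothesis \eqref{eq:periodic-gap-corollary} gives $\lambda_k(\hat p)-\lambda_{k+1}(\hat p)\ge 2c$ for every periodic point $\hat p$. Theorem~\ref{thm:main} therefore provides a dominated splitting $\R^d=E\oplus F$ of index $k$. What remains is to show that $F$ is uniformly contracted and $E$ is uniformly expanded. Then $E\oplus F$ is a hyperbolic splitting, with $E$ unstable and $F$ stable.

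\textbf{Step 2: exponents on $F$ for every ergodic measure.} Since $\hat A$ is strongly bunched, it is in particular fiber-bunched. Apply Kalinin's periodic approximation theorem, in the version for fiber-bunched (or merely H\"older) cocycles, to the restricted cocycles $\hat A|_F$ and $\hat A|_E$. This is legitimate because both restrictions are continuous cocycles on continuous bundles, and they are H\"older since dominated bundles of H\"older cocycles over hyperbolic bases are H\"older. The theorem says that for every ergodic $\hat\mu$ the top exponent of $\hat A|_F$, namely $\lambda_{k+1}(\hat\mu)$, is a limit of top exponents of $\hat A|_F$ at periodic points $\hat p_n$. By domination these periodic exponents equal $\lambda_{k+1}(\hat p_n)\le -c$, so $\lambda_{k+1}(\hat\mu)\le -c$. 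Similarly, the bottom exponent of $\hat A|_E$ gives $\lambda_k(\hat\mu)\ge c$.

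As an alternative route that avoids the H\"older regularity of the bundles, work with $\log\|\hat A^n|_F\|$ directly. Use that the growth $\max_{\hat x}\frac1n\log\|\hat A^n(\hat x)|_F\|$ is attained, in the limit, by some ergodic measure; this is the standard semi-uniform subadditive ergodic theorem. Then approximate that measure's top $F$-exponent by periodic data via Kalinin's theorem for continuous cocycles over hyperbolic systems with closing.

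\textbf{Step 3: uniform contraction.} By the semi-uniform subadditive ergodic theorem (Morris/Schreiber/Sturman--Stark),
\[
\lim_{n\to\infty}\frac1n\max_{\hat x}\log\|\hat A^n(\hat x)|_{F(\hat x)}\|=\sup_{\hat\mu}\lambda_{k+1}(\hat\mu)\le -c<0,
\]
where the supremum is over ergodic $\hat\mu$. Hence there are $C>0$ and $\sigma<1$ with $\|\hat A^n|_F\|\le C\sigma^n$. Applying the same argument to $(\hat A^n|_E)^{-1}$ gives uniform expansion on $E$. Together with the continuity and invariance of the splitting from Step 1, this is uniform hyperbolicity.

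The main obstacle is Step 2. One must justify that exponents of arbitrary ergodic measures of the restricted cocycle are approximated by periodic ones. If Kalinin's theorem is unavailable for the restricted bundles, I would fall back on the following: the Lyapunov exponents of $\hat A|_F$ at a periodic point agree with $\lambda_{k+1},\dots,\lambda_d$ of $\hat A$. I would then prove the approximation by an Anosov closing argument combined with the bounded distortion supplied by fiber bunching.
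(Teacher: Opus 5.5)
Your proof is correct and follows the paper's argument. Theorem~\ref{thm:main} gives the dominated splitting, Kalinin's periodic approximation yields $\lambda_{k+1}(\hat\mu)\le -c$ and $\lambda_k(\hat\mu)\ge c$ for every ergodic $\hat\mu$, and the semi-uniform subadditive ergodic theorem (Morris) applied to $\log\|\hat A^n|_{F}\|$ and $\log\|(\hat A^n|_{E})^{-1}\|$ gives uniform contraction on $F$ and uniform expansion on $E$.

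The only difference is that your main route applies Kalinin's theorem to the restricted cocycles, which requires the H\"older regularity of $E$ and $F$. The paper avoids this by applying Kalinin's theorem to the H\"older cocycle $\hat A$ itself and identifying $\lambda_{k+1}(\hat\mu)$ with the growth rate of $\|\hat A^n|_F\|$ via Kingman. In your ``alternative route'' you should invoke the theorem for $\hat A$ in the same way, since Kalinin's theorem requires H\"older, not merely continuous, cocycles.
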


\begin{proof}[Proof of Corollary \ref{cor:corollary-hyperbolic}] We start observing that, by \eqref{eq:periodic-gap-corollary}, for every periodic point $\hat p$ we have
    \begin{equation*}
        \lambda_k(\hat p) - \lambda_{k+1}(\hat p) \ge 2c > 0.
    \end{equation*}
    Thus, Theorem \ref{thm:main} guarantees that the cocycle $\hat A$ admits a dominated splitting of index $k$, which we denote by $\mathbb R^d= E(\hat x) \oplus F(\hat x)$ where $\dim E(\hat x) = k$. 

    Define  $a_n(\hat x) = \log \|\hat A^n(\hat x)|_{F(\hat x)}\|$. Since the dominated splitting is continuous, we have that $(a_n(\hat x))_{n \in \mathbb{N}}$ is a sequence of continuous and subadditive functions. Thus, by Kingman's subadditive theorem combined with the fact that the splitting above is dominated we have that
    \[\lim_{n\to +\infty} \frac{a_n(\hat x)}{n}=\inf_n\frac{1}{n}\int a_n(\hat x)d\hat \mu (\hat x)=\lambda_{k+1}(\hat \mu) \]
    for $\hat \mu$-almost every $\hat x\in \widehat \Sigma$ and every ergodic $\hat f$-invariant measure $\hat \mu$. Moreover, since the Lyapunov exponents of ergodic measures can be approximated by Lyapunov exponents on periodic points \cite[Theorem 1.4]{Kal}, it follows that 
    \[\lambda_{k+1}(\hat \mu)\leq -c \]
    for every ergodic measure $\hat \mu$.
Thus, by \cite[Theorem A.3]{Mor} we get that
\[\lim_{n\to +\infty} \sup_{\hat x}\frac{a_n(\hat x)}{n}=\sup_{\hat \mu}\inf_n\frac{1}{n}\int a_n(\hat x)d\hat \mu (\hat x)\leq -c \]
where the supremum is taken over all ergodic $\hat{f}$-invariant  measures. Consequently, given $\varepsilon\in (0,c)$ there exists $C>0$ such that 
\[\|\hat A^n(\hat x)|_{F(\hat x)}\|\leq Ce^{-(c-\varepsilon)n} \;\text{ for every } \hat x\in \widehat\Sigma \text{ and }n\in \mathbb N.\]
Applying a similar reasoning to $b_n(\hat x) = \log \left\|\left(\hat A^n(\hat x)|_{E(\hat x)}\right)^{-1}\right\|$ we conclude that for every $\varepsilon\in (0,c)$ there exist $C>0$ such that
\[ \left\|\left(\hat A^n(\hat x)|_{E(\hat x)}\right)^{-1}\right\| \leq   Ce^{(c-\varepsilon)n}  \;\text{ for every } \hat x\in \widehat\Sigma \text{ and }n\in \mathbb N,\]
which implies that 
\[ m\left(\hat A^n(\hat x)|_{E(\hat x)}\right)
       \ge C^{-1}e^{(c-\varepsilon)n}
       \;\text{ for every } \hat x\in \widehat\Sigma \text{ and }n\in \mathbb N. \]
Consequently, $\hat{ A}$ is uniformly hyperbolic as claimed.
\end{proof}

\begin{remark}
An analogous version of Theorem \ref{thm:main} holds when the base dynamics $\hat f:\widehat\Sigma\to\widehat\Sigma$ is replaced, for instance, by a transitive Anosov diffeomorphism. The proof proceeds via standard techniques using Markov partitions, so we omit the details.
\end{remark}

\begin{remark}\label{rem:mixing-reduction}
We will sometimes reduce from a transitive subshift of finite type to a mixing
one. Recall that a transitive subshift of finite type admits a finite cyclic
decomposition $\widehat\Sigma =\widehat\Sigma_0\sqcup\cdots\sqcup\widehat\Sigma_{r-1}$ where $\hat f(\widehat\Sigma_i)=\widehat\Sigma_{i+1\pmod r}$
and $\hat f^r|_{\widehat\Sigma_i}$ is mixing for every $i$.
Consider the cocycle $\hat A^r$ over $\hat f^r$ restricted to one
cyclic component. Its Lyapunov exponents are $r$ times the corresponding
Lyapunov exponents of $\hat A$. Hence the periodic-gap hypothesis is
preserved, with the constant $c$ replaced by $rc$. Moreover, if
$\hat A^r$ admits a dominated splitting of index $k$ on one cyclic
component, then this splitting can be transported by the cocycle to the other
components and gives a dominated splitting of index $k$ for $\hat A$ over
$\widehat\Sigma$. Thus, whenever convenient, we may pass to a mixing component
and to the corresponding iterate.
\end{remark}

\section{Preliminaries I: general constructions}\label{sec:prelim}
In this section, we gather some useful preliminary results and constructions that are going to be used in the proof of our main result.
We keep the notation of Section \ref{sec:definitions} throughout the paper.
Thus $\hat x,\hat y,\ldots$ always denote points of $\widehat\Sigma$, while $x,y,\ldots$ are reserved for
points of the one-sided shift $\Sigma$.  We regard $\hat A$ as acting on
the trivial bundle $\widehat\Sigma\times\R^d$ and denote the fiber over
$\hat x$ by $E_{\hat x}$ when the base point needs to be displayed.

\subsection{Characterization of dominated splittings}
We start recalling a characterization of dominated splittings due to Bochi and
Gourmelon \cite{BochiGourmelon}, which will play an important role in our
proof. Their characterization is formulated in terms of singular values.
Given an invertible matrix $L\in\GL(d,\R)$, let $\sigma_1(L)\ge\cdots\ge\sigma_d(L)>0$ denote its singular values.

\begin{theorem}\label{thm:BG-domination}
The cocycle $\hat A$ has a dominated splitting of index $k$ if and only if
there are $C>0$ and $0<\tau<1$ such that
\[
 \frac{\sigma_{k+1}(\hat A^n(\hat x))}
      {\sigma_k(\hat A^n(\hat x))}
 \le C\tau^n \; \text{ for every }
\hat x\in\widehat\Sigma \text{ and } n\ge0.
\]
\end{theorem}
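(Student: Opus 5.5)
This is the Bochi--Gourmelon characterization, a cited result. I plan to prove it directly, treating the two implications separately.

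\emph{Forward direction (domination $\Rightarrow$ singular value gap).} I will first note that the angle between $E(\hat x)$ and $F(\hat x)$ is bounded away from zero uniformly in $\hat x$. This holds because the bundles are continuous, transverse, and $\widehat\Sigma$ is compact. I will use this to bound $\sigma_k(\hat A^n(\hat x))$ from below. Let $V\subset E(\hat x)$ be any $k$-dimensional subspace. The minimax characterization gives $\sigma_k(L)\ge m(L|_{V})$, so $\sigma_k\ge m(\hat A^n|_{E})$. For the upper bound I will use $\sigma_{k+1}(L)\le \norm{L\,\pi}$, where $\pi$ is the projection onto $F$ along $E$; this works because $L$ restricted to the kernel $E$ of $\pi$ contributes nothing beyond rank $d-k$. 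The norm of $\pi$ is controlled by the angle bound, so $\sigma_{k+1}\le \norm{\pi}\,\norm{\hat A^n|_F}$. Dividing the two bounds gives the claim, with $C$ multiplied by $\sup\norm{\pi}$.

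\emph{Converse (singular value gap $\Rightarrow$ domination).} I will construct the bundles as limits of singular directions.
\begin{enumerate}
\item Let $U_k(L)$ denote the span of the top $k$ left singular vectors of $L$. Set $E_n(\hat x)=U_k(\hat A^n(\hat f^{-n}\hat x))$.
\item Set $F_n(\hat x)$ to be the span of the bottom $d-k$ right singular vectors of $\hat A^n(\hat x)$, or equivalently $U_{d-k}$ of the inverse cocycle.
\item Show that $E_n(\hat x)$ is Cauchy in the Grassmannian, with $d(E_n,E_{n+1})\lesssim \sigma_{k+1}/\sigma_k(\hat A^n)\le C\tau^n$.
\end{enumerate}
Step 3 is a standard perturbation lemma. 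For $L'=LB$ with $\norm{B},\norm{B^{-1}}$ bounded, the angle between $U_k(L')$ and $U_k(L)$ is at most a constant times $\sigma_{k+1}(L)/\sigma_k(L)$. The limits $E$ and $F$ are then uniform limits of continuous bundles, hence continuous. Invariance follows from the defining formulas.

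The main obstacle is transversality of $E$ and $F$ together with the domination estimate. For the domination, take $u\in E$ and $v\in F$. Since $E(\hat x)$ is exponentially close to $E_n$ built from the past, the vector $\hat A^m u$ grows at least like $\sigma_k(\hat A^m(\hat x))$, up to constants. Likewise $\norm{\hat A^m v}\lesssim \sigma_{k+1}(\hat A^m(\hat x))$ once $F$ is close to the bottom right singular space of $\hat A^m(\hat x)$.

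I will get transversality with a compactness and contradiction argument. Suppose the angle between $E$ and $F$ were small at some point. Take a vector that is close to both subspaces and iterate it forward. Being near $E$, it should grow like $\sigma_k$; being near $F$, it should grow only like $\sigma_{k+1}$. These two rates contradict the exponential gap after a fixed number $n_0$ of steps, and $n_0$ is uniform in $\hat x$. The result is a uniform lower bound on the angle, from which the estimate in the definition of dominated splitting follows with new constants.
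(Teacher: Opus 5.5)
The paper does not prove this statement; it quotes it from Bochi--Gourmelon and uses it as a black box. Your forward direction is correct. It can even be shortened: by min--max over $(d-k)$-planes, $\sigma_{k+1}(\hat A^n(\hat x))\le\norm{\hat A^n(\hat x)|_{F(\hat x)}}$ holds directly, with no projection or angle bound. Your construction of $E$ and $F$ as uniform limits of $U_k(\hat A^n(\hat f^{-n}\hat x))$ and of the bottom right singular $(d-k)$-spaces $S_{d-k}(\hat A^n(\hat x))$ is the right starting point. Invariance, however, does not follow ``from the defining formulas'', since $U_k(ML)\ne M\,U_k(L)$ in general. You need the companion estimate $d\bigl(U_k(ML),M\,U_k(L)\bigr)\le\bol(M)\,\sigma_{k+1}(L)/\sigma_k(L)$.

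The genuine gap is in your last two paragraphs, which is where the content of the theorem lies, and the argument there is circular.

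\emph{The $E$-side.} You assert that a vector in (or near) $E(\hat x)$ grows under $\hat A^m(\hat x)$ at least like $\sigma_k(\hat A^m(\hat x))$. Nothing you have established supports this. You only know that $E(\hat x)$ is close to the fast left singular space of the \emph{past} products $\hat A^n(\hat f^{-n}\hat x)$, and that says nothing about the \emph{future} map $\hat A^m(\hat x)$. Such a lower bound holds precisely when $E(\hat x)$ is uniformly transverse to $S_{d-k}(\hat A^m(\hat x))$, which is $C\tau^m$-close to $F(\hat x)$. So your contradiction argument assumes the transversality it is meant to prove, and the domination estimate relies on the same unproved bound.

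\emph{The $F$-side.} This is not free either when $k\ge 2$. Knowing $d\bigl(F(\hat x),S_{d-k}(\hat A^m(\hat x))\bigr)\le C\tau^m$ only gives $\norm{\hat A^m(\hat x)v}\le\sigma_{k+1}+C\tau^m\sigma_1$. The hypothesis does not control $\sigma_1/\sigma_k$, so the error term can dominate $\sigma_k$.

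\emph{One way to close the gap} is through invariant measures.
\begin{enumerate}
\item For each ergodic $\mu$ the hypothesis gives $\lambda_k(\mu)-\lambda_{k+1}(\mu)\ge-\log\tau>0$.
\item For $\mu$-almost every point, $E$ and $F$ coincide with the sum of the top $k$ Oseledets spaces and the sum of the remaining ones (the singular-value description of the Oseledets splitting). Hence they are complementary $\mu$-almost everywhere.
\item The set $\{E\cap F\ne\{0\}\}$ is closed and invariant and is null for every ergodic measure, so it is empty by Krylov--Bogolyubov. Compactness then gives a uniform angle between $E$ and $F$.
\item Finally, $a_n=\log\norm{\hat A^n|_F}-\log m(\hat A^n|_E)$ is continuous and subadditive, with $a_n/n\to\lambda_{k+1}(\mu)-\lambda_k(\mu)\le\log\tau$ for every ergodic $\mu$. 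The uniform subadditive ergodic theorem, as in the paper's proof of Corollary~\ref{cor:corollary-hyperbolic}, then yields domination with any rate $\tau'\in(\tau,1)$.
\end{enumerate}
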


\subsection{Grassmannians, Exterior powers and hyperplane sections}\label{sec: grassmanian}

\subsubsection{Grassmannians} We use the following Grassmannian notation throughout the paper.  If $V$ is a
$d$-dimensional real vector space, $\Gr(q,V)$ denotes the Grassmannian of
$q$-dimensional linear subspaces of $V$, and we abbreviate
\[
 \Gr(q,d):=\Gr(q,\mathbb R^d).
\]
If $E\to X$ is a vector bundle, then
\[
 \Gr_q(E):=\bigsqcup_{x\in X}\Gr(q,E_x)
\]
denotes the corresponding Grassmannian bundle.  Thus expressions such as
$\Gr(q,E_{\hat p})$ refer to a single fiber, whereas $\Gr_q(E)$ refers to
the whole bundle.

\subsubsection{Exterior powers}
For $1\le q\le d-1$, recall that the $q$-th exterior power
$\Lambda^q\mathbb R^d$ is the vector space generated by formal wedges
$v_1\wedge\cdots\wedge v_q$, which is multilinear in the vectors and alternating in the
sense that the wedge vanishes whenever two entries coincide.  It has dimension
$\binom dq$.  Every linear map $L:\mathbb R^d\to\mathbb R^d$ induces a linear
map $\Lambda^qL:\Lambda^q\mathbb R^d\longrightarrow\Lambda^q\mathbb R^d$ given by
\[
 (\Lambda^qL)(v_1\wedge\cdots\wedge v_q)
 =Lv_1\wedge\cdots\wedge Lv_q.
\]
With the Euclidean structure induced from $\mathbb R^d$, the singular values of
$\Lambda^qL$ are precisely the products
$\sigma_{i_1}(L)\cdots\sigma_{i_q}(L)$ with
$1\le i_1<\cdots<i_q\le d$.  In particular, its two largest singular values are
\[
 \sigma_1(\Lambda^qL)=\sigma_1(L)\cdots\sigma_q(L)
\]
and
\[
 \sigma_2(\Lambda^qL)=
 \sigma_1(L)\cdots\sigma_{q-1}(L)\sigma_{q+1}(L),
\]
so
\begin{equation*}\label{eq:wedge-gap}
 \frac{\sigma_2(\Lambda^qL)}{\sigma_1(\Lambda^qL)}
 =\frac{\sigma_{q+1}(L)}{\sigma_q(L)}.
\end{equation*}

A nonzero vector $v_1\wedge\cdots\wedge v_q\in\Lambda^q\R^d$ is said to be
\emph{decomposable}.  More generally, for any finite-dimensional real vector
space $V$ the \emph{Pl\"ucker embedding} $\iota_{q,V}:\Gr(q,V)\longrightarrow\mathbb P(\Lambda^qV)$ is given by
\begin{equation*}\label{eq:plucker-embedding}
 \iota_{q,V}(\Span\{v_1,\ldots,v_q\})
 =[v_1\wedge\cdots\wedge v_q]
\end{equation*}
where $\mathbb P(\Lambda^qV)$ denotes the projective space of one-dimensional
linear subspaces of $\Lambda^qV$, and $[v_1\wedge\cdots\wedge v_q]$ denotes
the line spanned by the nonzero decomposable vector
$v_1\wedge\cdots\wedge v_q$.
When the ambient vector space is clear we simply write $\iota_q$. In
particular, $\iota_q:\Gr(q,d)\to\mathbb P(\Lambda^q\R^d)$ in the standard
fiber.  Its image is exactly the set of projective lines generated by nonzero
decomposable $q$-vectors.  We occasionally identify a Grassmannian with its
Pl\"ucker image, but whenever a measure on the Grassmannian is involved we
will keep track of the inverse image under $\iota_q$ explicitly.

\subsubsection{Hyperplane sections} If $S$ is a $(d-q)$-plane, the set
\[
 \cH(S)=\{V\in\Gr(q,d):V\cap S\ne\{0\}\}
\]
is the corresponding \emph{hyperplane section}.  Equivalently, for a suitable nonzero covector
$\ell\in(\Lambda^q\R^d)^*$ one has
\[
 \cH(S)=\iota_q^{-1}\bigl(\mathbb P(\ker\ell)\bigr).
\]
Note that $\ell$ is a nonzero linear functional such that \[\ell(v_1\wedge\cdots\wedge v_q)=0\]
whenever
\[
\operatorname{span}\{v_1,\ldots,v_q\}\cap S\ne{0}.
\]

\subsubsection{Distances}
We fix the standard metric $d_{\Gr}$ induced by orthogonal projections on every Grassmannian which appears below. We will use in the sequel the following estimate
\begin{equation}\label{eq:prelim-grass-lip}
 d_{\Gr}(LV,LW)\le C_d\bol(L)d_{\Gr}(V,W),
\end{equation}
valid for every $L\in\GL(d,\R)$ and $V,W\in\Gr(q,d)$.
 Moreover, we use the corresponding
projective metric on $\mathbb P(\Lambda^q\R^d)$ and denote Hausdorff distance
by $d_H$. Since all these spaces are compact, replacing these metrics by any
other standard equivalent metrics only changes the constants in the estimates.

 When probabilities on $\Gr(q,d)$ are considered, $W_1$ denotes the \emph{Wasserstein-$1$ distance} associated to $d_{\Gr}$ given by
\[
 W_1(\nu_1,\nu_2)
 =\inf_{\pi\in\Pi(\nu_1,\nu_2)}
   \int d_{\Gr}(V,W)\,d\pi(V,W)
\]
where $\Pi(\nu_1,\nu_2)$ denotes the set of \emph{couplings} of
$\nu_1$ and $\nu_2$: these are the Borel probability measures $\pi$ on
$\Gr(q,d)\times\Gr(q,d)$ whose first marginal is $\nu_1$ and whose second
marginal is $\nu_2$.  Equivalently, for every Borel set $E\subset\Gr(q,d)$,
\[
 \pi(E\times\Gr(q,d))=\nu_1(E)\; \text{ and }\;
 \pi(\Gr(q,d)\times E)=\nu_2(E).
\]
Thus $W_1(\nu_1,\nu_2)$ is the least possible average distance
$d_{\Gr}(V,W)$ among all joint distributions of $V$ and $W$ having the prescribed
marginals $\nu_1$ and $\nu_2$.

\subsection{Finite extensions}\label{sec:finite extensions}

At two points in the proof we will pass to a finite extension of the two-sided
system. We recall here what this means and record the properties that will be
used later.

Let $F$ be a finite set and let
\[
 \beta:\widehat\Sigma\longrightarrow\operatorname{Sym}(F)
\]
be a locally constant map, where $\operatorname{Sym}(F)$ denotes the group of
permutations of $F$. After replacing the base, if necessary, by a
topologically conjugate higher-block presentation, we may assume that
$\beta(\hat x)$ depends only on the current symbol. The corresponding
\emph{finite extension} is the skew product
\[
  \hat f_{\mathrm{ext}}(\hat x,a)
 :=
 \bigl(\hat f\hat x,\beta(\hat x)a\bigr)
\]
acting on $\widehat\Sigma_{\mathrm{ext}}:= \widehat\Sigma\times F$.
The natural projection $\pi_{\mathrm{ext}}:
 \widehat\Sigma_{\mathrm{ext}}\longrightarrow\widehat\Sigma$ given by $\pi_{\mathrm{ext}}(\hat x,a)=\hat x$
satisfies
\[
 \pi_{\mathrm{ext}}\circ\hat f_{\mathrm{ext}}
 =
 \hat f\circ\pi_{\mathrm{ext}}.
\]
Thus every point of the base has finitely many lifts. The additional
coordinate records which element of a finite family is being followed along
the orbit. In the applications below, the elements of $F$ label finitely many
subspaces or components, and $\beta(\hat x)$ records the permutation of these
labels induced when one passes from $\hat x$ to $\hat f\hat x$.

As a simple example, consider the full two-shift and take
$F=\mathbb Z/2\mathbb Z$. If
\[
 \beta(\hat x)a=a+x_0\pmod 2,
\]
then
\[
 \hat f_{\mathrm{ext}}(\hat x,a)
 =
 \bigl(\hat f\hat x,a+x_0 \pmod 2\bigr).
\]
The second coordinate records the parity of the number of symbols $1$
encountered along the orbit. The extensions used below are of the same type,
with a general finite set $F$ and an arbitrary permutation-valued cocycle
$\beta$.

The cocycle on the extension is the pullback of $\hat A$ by
$\pi_{\mathrm{ext}}$ given by
\[
 \hat A_{\mathrm{ext}}(\hat x,a) := \hat A(\hat x).
\]
Consequently,
\begin{equation}\label{eq:finite-extension-iterates}
 \hat A_{\mathrm{ext}}^n(\hat x,a) = \hat A^n(\hat x) \; \text{ for every }\; n\ge1.
\end{equation}
In particular, the linear dynamics along an orbit in the extension is exactly
the same as that along its projection to the base.

Let now $\hat p$ be a periodic point of $\hat f$ of period $n$. After
$n$ iterates, a lift $(\hat p,a)$ returns to the same base point, but its
finite coordinate is transformed by the permutation
\[
 \beta^{(n)}(\hat p)
 :=
 \beta(\hat f^{n-1}\hat p)\cdots\beta(\hat p).
\]
Hence $(\hat p,a)$ need not be periodic for
$\hat f_{\mathrm{ext}}^n$. However, since $F$ is finite, there exists
$1\le r\le |F|$ such that
\[
 \bigl(\beta^{(n)}(\hat p)\bigr)^r a=a.
\]
The point $(\hat p,a)$ is therefore periodic for the extension with period
dividing $rn$, and the corresponding return matrix over $rn$ iterates is
\[
 \hat A_{\mathrm{ext}}^{rn}(\hat p,a)
 =
 \bigl(\hat A^n(\hat p)\bigr)^r.
\]
It follows that the normalized Lyapunov exponents of the lifted periodic
orbit are the same as those of the periodic orbit downstairs. In particular,
periodic Lyapunov gaps are unchanged under passage to the finite extension.

We also record how domination behaves under this construction. If
$\hat A$ admits a dominated splitting of index $k$, then its pullback
$\hat A_{\mathrm{ext}}$ clearly admits a dominated splitting of the same
index. Conversely, suppose that the restriction of
$\hat A_{\mathrm{ext}}$ to every transitive component of
$\widehat\Sigma_{\mathrm{ext}}$ is index-$k$ dominated. Since there are only
finitely many such components, the domination constants may be chosen
uniformly. By \eqref{eq:finite-extension-iterates} and the singular-value
characterization of domination given in Theorem \ref{thm:BG-domination}, the same uniform singular-value estimate
holds on the base. Hence $\hat A$ is index-$k$ dominated.

Equivalently, if $\hat A$ is not index-$k$ dominated, then at least one
transitive component of the finite extension is also not index-$k$
dominated. Thus both the periodic-gap hypothesis and the failure of
domination are preserved under the finite extensions used below.

\subsection{Stable and unstable holonomies}\label{sec:holonomies}
One of the most useful consequences of the fiber bunching assumption described in Section \ref{sec:fiber-bunching} is the existence of \emph{stable and unstable holonomies}. They provide canonical linear identifications between fibers over points lying on the same local stable or local unstable set.
More precisely, if $\hat A$ is $\alpha$-H\"older and fiber-bunched, then
for every $\hat y\in W^s_{\rm loc}(\hat x)$ the \emph{stable holonomy} from
$\hat x$ to $\hat y$ is defined by
\begin{equation*}\label{eq:canonical-holonomy}
 H^s_{\hat x,\hat y}
 =\lim_{n\to\infty}\hat A^n(\hat y)^{-1}\hat A^n(\hat x).
\end{equation*}
For $\hat y\in W^u_{\rm loc}(\hat x)$ the \emph{unstable holonomy} is
similarly given by
\begin{equation*}\label{eq:canonical-unstable-holonomy}
 H^u_{\hat x,\hat y}
 =\lim_{n\to\infty}
 \hat A^{-n}(\hat y)^{-1}\hat A^{-n}(\hat x).
\end{equation*}
These maps are H\"older continuous on
local leaves and satisfy the usual composition and equivariance properties
\begin{align*}
 H^*_{\hat x,\hat x}&=\Id,\;\;
 H^*_{\hat y,\hat z}H^*_{\hat x,\hat y}=H^*_{\hat x,\hat z}\; \text{ and }\;
 H^*_{\hat f\hat x,\hat f\hat y}\hat A(\hat x)
 =\hat A(\hat y)H^*_{\hat x,\hat y},
\end{align*}
for $*=s,u$. See, for example, \cite[Proposition~1.2]{BonattiViana2004}. From now on $H^s$ and $H^u$ always denote these canonical holonomies.

We next record the quantitative estimates that will be needed later.
After replacing $\hat f$ by an iterate if necessary, there are
$0<\eta_0\le\theta_s\le\alpha$, constants $C_H,C_B>0$, and $0<\tau<1$ such
that for local stable or unstable pairs $\hat x$ and $\hat y$
\begin{equation}\label{eq:leafwise-package}
 \norm{H^*_{\hat x,\hat y}-\Id}
 \le C_H d_\theta(\hat x,\hat y)^{\alpha}\; \text{ for }\; *=s,u,
\end{equation}
and for every local product rectangle
$(\hat x,\hat y,\hat x',\hat y')$ one has the following estimate
\begin{equation}\label{eq:rectangle-package}
 \norm{H^u_{\hat y,\hat y'}H^s_{\hat x,\hat y}
       -H^s_{\hat x',\hat y'}H^u_{\hat x,\hat x'}}
 \le C_H d_\theta(\hat x,\hat x')^{\theta_s}.
\end{equation}
Here  the term \emph{local product rectangle} means that $\hat y\in W^s_{\rm loc}(\hat x)$, $\hat x'\in W^u_{\rm loc}(\hat x)$, and $\hat y'\in W^u_{\rm loc}(\hat y)\cap W^s_{\rm loc}(\hat x')$.
Moreover, in dimension $d\ge3$,
\begin{equation}\label{eq:eta-bunching-consequence}
 \bol(\hat A^n(\hat z))\,\theta^{\eta_0 n}
 \le C_B\tau^n \; \text{ for }\; n\ge0,\ \hat z\in\widehat\Sigma.
\end{equation}
Since $\eta_0\le\theta_s$, the same estimate holds with $\theta_s$ in place
of $\eta_0$. The leafwise regularity \eqref{eq:leafwise-package} follows already from fiber bunching and can be found, for instance, in  \cite[Proposition~1.2]{BonattiViana2004}. Transverse regularity \eqref{eq:eta-bunching-consequence} also follows from the fiber bunching assumption for some $\theta_s\leq \alpha$ as proved in \cite[Proposition 2.10]{BochiGaribaldi}. Strong bunching is used to obtain an exponent
$\theta_s\ge\eta_0$ for which the transverse estimate
\eqref{eq:rectangle-package} holds and which is therefore compatible with the
exponential estimate \eqref{eq:eta-bunching-consequence} (see
\cite[Lemma~3.6]{BochiGaribaldi}).

\subsection{Stable-holonomy reduction to the one-sided shift}
\label{subsec:stable-reduction}

In this section we describe a standard construction that reduces
$\hat A$ to a cocycle over the one-sided shift. We also explain how
invariant measures and $u$-states behave under this reduction. The
construction uses stable holonomies and goes back to Bonatti--Viana
\cite{BonattiViana2004}. See also
\cite{AvilaViana2007,BackesBrownButler,DeWittMitsutani} for related
formulations.

\subsubsection{Reduction of the cocycle}

For each symbol $a\in\{1,\ldots,\ell\}$, fix a left-infinite admissible
sequence $ x^-(a)\in\Sigma^s$ (called \emph{reference past} of $a$) whose zero-coordinate is $a$. If
$\hat x\in\widehat\Sigma$ has zero-coordinate $x_0=a$, let
\[
 \varphi^u(\hat x)
 \in W^u_{\rm loc}(x^-(a))\cap W^s_{\rm loc}(\hat x)
\]
be the two-sided sequence having past $x^-(a)$ and the same future as
$\hat x$. Thus
\[
 P\varphi^u(\hat x)=P\hat x,
\]
and $\varphi^u$ is constant on each local stable set.

Using the stable holonomies, define
\begin{equation}\label{eq:stable-reduced-cocycle}
 A(P\hat x)
 :=
 H^s_{\hat f\hat x,\,\varphi^u(\hat f\hat x)}
 \,\hat A(\hat x)\,
 H^s_{\varphi^u(\hat x),\,\hat x}.
\end{equation}
The composition and equivariance properties of the stable holonomies imply
that the right-hand side depends only on $P\hat x$. Hence
\eqref{eq:stable-reduced-cocycle} defines a H\"older cocycle
\[
 A:\Sigma\longrightarrow\GL(d,\R)
\]
over the one-sided shift $f:\Sigma\to\Sigma$. In other words, stable
holonomy identifies the fiber over $\hat x$ with the fiber over the preferred
representative $\varphi^u(\hat x)$, and with respect to these
identifications the cocycle is constant on local stable sets. 

It is useful to record the corresponding factor map on the Grassmannian
bundle. Define
\[
 \mathcal Q_q:
 \widehat\Sigma\times\Gr(q,d)
 \longrightarrow
 \Sigma\times\Gr(q,d)
\]
by
\begin{equation}\label{eq:stable-factor-map-prelim}
 \mathcal Q_q(\hat x,V)
 :=
 \bigl(
 P\hat x,\,
 H^s_{\hat x,\,\varphi^u(\hat x)}V
 \bigr).
\end{equation}
If
\[
 \hat F(\hat x,V) = \bigl(\hat f\hat x,\hat A(\hat x)V\bigr) \; \text{ and } \;  F(x,V) =  \bigl(fx,A(x)V\bigr),
\]
then \eqref{eq:stable-reduced-cocycle} implies
\[
 \mathcal Q_q\circ\hat F = F\circ\mathcal Q_q.
\]
Thus $\mathcal Q_q$ intertwines the two-sided and one-sided Grassmannian
dynamics.

\subsubsection{Reduction of measures}

We next recall the measure-theoretic setting in which the reduction will be
used. Let $\hat\mu$ be a fully supported
$\hat f$-invariant probability measure on $\widehat\Sigma$. Set $\mu^s:=(P^s)_*\hat\mu$ and $ \mu:=P_*\hat\mu$.
For each symbol $a$, the map
\[
 [0;a]
 \longrightarrow
 P^s([0;a])\times P([0;a]),
 \qquad
 \hat x\longmapsto
 \bigl(P^s\hat x,P\hat x\bigr),
\]
is a homeomorphism. We say that $\hat\mu$ has
\emph{continuous local product structure} if there exists a positive
continuous function $ \psi:\widehat\Sigma\longrightarrow(0,\infty)$
such that, under this identification,
\begin{equation*}\label{eq:local-product-structure}
 \hat\mu|_{[0;a]}= \psi\, \bigl( \mu^s|_{P^s([0;a])} \times \mu|_{P([0;a])} \bigr)
\end{equation*}
for every symbol $a$. This is the usual local product structure for measures
on a two-sided subshift as used, for instance, in
\cite[Section~2.3]{BackesBrownButler} and
\cite[Section~2]{DeWittMitsutani}.

Disintegrate $\hat\mu$ with respect to the projection
$P:\widehat\Sigma\to\Sigma$
\begin{equation*}\label{eq:stable-disintegration-base}
 \hat\mu
 =
 \int_\Sigma
 \hat\mu_x^s\,d\mu(x) \; \text{ with }\;
 \supp\hat\mu_x^s\subset P^{-1}(x).
\end{equation*}
Since the fibers of $P$ are precisely the local stable sets,
$\hat\mu_x^s$ is the conditional measure of $\hat\mu$ on the stable
fiber over $x$.

Let $\hat m$ be an $\hat F$-invariant probability measure on
$\widehat\Sigma\times\Gr(q,d)$ which projects to $\hat\mu$. Thus
\[
 \hat\pi_*\hat m=\hat\mu,
\]
where $ \hat\pi:
 \widehat\Sigma\times\Gr(q,d)\longrightarrow\widehat\Sigma $
is the canonical projection. Write the fiber disintegration of
$\hat m$ as
\begin{equation}\label{eq:disintegrationhatm}
 \hat m
 =
 \int_{\widehat\Sigma}
 \hat m_{\hat x}\,d\hat\mu(\hat x),
\end{equation}
where $\hat m_{\hat x}$ is, for $\hat\mu$-almost every $\hat x$,
a probability measure on the fiber
$\{\hat x\}\times\Gr(q,d)$, which we identify with $\Gr(q,d)$.

We define the reduced measure by
\[
 m:=(\mathcal Q_q)_*\hat m.
\]
Since $\mathcal Q_q$ intertwines $\hat F$ and $F$, the measure $m$ is
$F$-invariant and projects to $\mu$. Its fiber disintegration is given, for
$\mu$-almost every $x$, by
\begin{equation*}\label{eq:one-sided-conditional-from-two-sided}
 m_x = \int_{P^{-1}(x)}  \bigl( H^s_{\hat x,\,\varphi^u(\hat x)} \bigr)_*
 \hat m_{\hat x}\, d\hat\mu_x^s(\hat x).
\end{equation*}
Thus the one-sided conditional $m_x$ is obtained by transporting the
two-sided fiber conditionals to the preferred stable representative and then
averaging over the stable fiber. Compare with
\cite[Lemma~4.5]{AvilaViana2007},
\cite[Sections~4.2--4.3]{BackesBrownButler}, and
\cite[Lemma~2.14]{DeWittMitsutani}. Note that in these works, $\hat A$ is assumed to be constant along local stable sets after a coordinate change induced by stable holonomies. This explains why stable holonomies do not explicitly appear in their expression for $m_x$.

\subsubsection{$u$-states} An
$\hat F$-invariant probability measure $\hat m$ on
$\widehat\Sigma\times\Gr(q,d)$ which projects to $\hat\mu$ is called a
\emph{$u$-state} if its fiber disintegration
\eqref{eq:disintegrationhatm} can be chosen to be invariant under local
unstable holonomies, that is,
\begin{equation*}\label{eq:ustate-two-sided}
 (H^u_{\hat x,\hat y})_*\hat m_{\hat x}
 =
 \hat m_{\hat y}
\end{equation*}
for almost every pair of points $\hat x,\hat y$ lying on the same local
unstable set.

The same terminology will be used bellow on partial flag bundles: the
Grassmannian fiber $\Gr(q,d)$ is simply replaced by the corresponding
partial flag manifold, and the holonomies act simultaneously on each
subspace of the flag.

One of the main properties of $u$-states used below is that the one-sided
fiber disintegration obtained after stable reduction admits a weak-star
continuous version. More precisely, if $\hat m$ is a $u$-state and
$m=(\mathcal Q_q)_*\hat m$, then the family
\[
 x\longmapsto m_x
\]
may be chosen to depend weak-star continuously on $x\in\Sigma$ as observed in
\cite[Proposition~4.4]{AvilaViana2007} and
\cite[Proposition~2.15]{DeWittMitsutani}.

We finally record the inverse-branch relation satisfied by this continuous
disintegration. Continuous local product structure implies that the
one-sided measure $\mu=P_*\hat\mu$ admits a positive continuous Jacobian
$J_\mu f$ with respect to $f$ (see
\cite[Section~2.3]{BackesBrownButler}). We write $J_\mu f^n$ for the
corresponding Jacobian of $f^n$. Then, for every $x\in\Sigma$ and every
$n\ge1$,
\begin{equation}\label{eq:one-sided-transfer-prelim}
 m_x
 =
 \sum_{z\in f^{-n}(x)}
 \frac{1}{J_\mu f^n(z)}
 (A^n(z))_*m_z.
\end{equation}
In particular, the coefficients
\[
 p_n(z\mid x)
 :=
 \frac{1}{J_\mu f^n(z)}
\]
are positive and satisfy
\[
 \sum_{z\in f^{-n}(x)}p_n(z\mid x)=1.
\]
Formula \eqref{eq:one-sided-transfer-prelim} is the inverse-branch formula
for the continuous disintegration of a $u$-state (see
\cite[Proposition~2.16]{DeWittMitsutani}). When $\mu$ is a fully supported
one-step Markov measure, these inverse-branch weights are locally constant
on the corresponding inverse-branch cylinders.

\section{Preliminaries II: Holonomy paths and periodicization}\label{sec:paths}

The aim of this section is to explain how holonomies and orbit pieces can be
joined into paths between fibers. We also show how a path that almost closes
can be changed into a true periodic orbit while changing the corresponding
linear map only by a small amount. This is the step that will allow us to use
the periodic Lyapunov-gap assumption later in the proof.

A \emph{holonomy path} from $\hat x$ to $\hat y$ is specified by a point
$\hat x'\in W^u_{\rm loc}(\hat x)$, an integer $n\ge0$, and
$\hat y'=\hat f^n\hat x'\in W^s_{\rm loc}(\hat y)$.  Its linear map is
\begin{equation*}\label{eq:path-map}
 {\mathcal B}_{\hat x,\hat y}
 =H^s_{\hat y',\hat y}\hat A^n(\hat x')H^u_{\hat x,\hat x'}:
 E_{\hat x}\longrightarrow E_{\hat y}.
\end{equation*}
Thus, along a holonomy path, we move from one point to another along a stable
or unstable leaf, then apply the cocycle for some number of iterates, and then
move again along a stable or unstable leaf. A finite sequence of such
operations gives a linear map between the corresponding fibers. When the path
starts and ends at the same point $\hat p$, we call it a \emph{based holonomy loop at $\hat p$} or a \emph{based path loop at $\hat{p}$}. 

We shall use two elementary path operations. For this, recall our bracket convention \eqref{eq:bracket-conv} and that hatted symbols denote points in $\widehat \Sigma$.

\begin{lemma}
\label{lem:path-perturb}
Take $\hat c\in W^u_{\rm loc}(\hat y)$ and $\hat b=\hat f^L\hat c\in  W^s_{\rm loc}(\hat z)$ and let
\[
 {\mathcal G}=H^s_{\hat b,\hat z}\hat A^L(\hat c)
 H^u_{\hat y,\hat c}:E_{\hat y}\to E_{\hat z}
\]
be a fixed holonomy path.  If $\hat y_j\to\hat y$, then, for all large $j$,
there are holonomy paths ${\mathcal G}_j:E_{\hat y_j}\to E_{\hat z}$ with the
same orbit length $L$ and ${\mathcal G}_j\to{\mathcal G}$.
\end{lemma}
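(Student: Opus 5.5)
The plan is to build $\mathcal G_j$ explicitly by changing only the past of the intermediate point $\hat c$, and then use the holonomy estimates of Section~\ref{sec:holonomies} to show convergence.

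Since $\hat y_j\to\hat y$, for large $j$ the points $\hat y_j$ and $\hat y$ share the zero coordinate, and indeed agree on coordinates $|n|<N_j$ with $N_j\to\infty$. The point $\hat c\in W^u_{\rm loc}(\hat y)$ has the same past as $\hat y$. I define
\[
 \hat c_j:=[\hat y_j,\hat c],
\]
the point with the past of $\hat y_j$ and the future of $\hat c$. This is admissible because $c_0=y_0=(y_j)_0$. Then $\hat c_j\in W^u_{\rm loc}(\hat y_j)\cap W^s_{\rm loc}(\hat c)$. Set $\hat b_j:=\hat f^L\hat c_j$. Since $\hat c_j$ and $\hat c$ share coordinates $n\ge0$, we have $\hat b_j\in W^s_{\rm loc}(\hat b)\subset W^s_{\rm loc}(\hat z)$. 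Local stable sets are equivalence classes (same nonnegative coordinates), so this inclusion is valid. The candidate path is
\[
 \mathcal G_j:=H^s_{\hat b_j,\hat z}\hat A^L(\hat c_j)H^u_{\hat y_j,\hat c_j},
\]
which has orbit length $L$, as required.

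For convergence, observe that $\hat c_j\to\hat c$. The futures agree, and the past of $\hat c_j$ equals that of $\hat y_j$, which agrees with the past of $\hat y$ (equal to the past of $\hat c$) on $N_j$ coordinates. Hence $d_\theta(\hat c_j,\hat c)\le\theta^{N_j}\to0$. The three factors then converge separately.

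\textbf{Middle factor.} $\hat A^L(\hat c_j)\to\hat A^L(\hat c)$ by continuity of $\hat A$, with $L$ fixed.

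\textbf{Stable factor.} By the composition rule, $H^s_{\hat b_j,\hat z}=H^s_{\hat b,\hat z}H^s_{\hat b_j,\hat b}$. Since $d_\theta(\hat b_j,\hat b)\le\theta^{-L}d_\theta(\hat c_j,\hat c)\to0$, estimate \eqref{eq:leafwise-package} gives $H^s_{\hat b_j,\hat b}\to\Id$.

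\textbf{Unstable factor.} This is the step that needs care, because the base points $\hat y_j$ move and we cannot simply factor through a fixed holonomy. I would write $H^u_{\hat y_j,\hat c_j}-H^u_{\hat y,\hat c}$ and compare both to $\Id$ using \eqref{eq:leafwise-package}. That only yields a bound of size $d(\hat y,\hat c)^\alpha$, not convergence, so I would instead use continuity of the holonomy in its pair of base points. This can be obtained from the rectangle estimate \eqref{eq:rectangle-package} applied to the local product rectangle $(\hat y,\hat y_j,\hat c,\hat c_j)$. Indeed, $\hat y_j\in W^s_{\rm loc}(\hat y)$ after modifying $\hat y_j$ along stable sets, and in general I would first move from $\hat y_j$ to $[\hat y,\hat y_j]$. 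The estimate gives
\[
 H^u_{\hat y_j,\hat c_j}\approx H^s_{\hat c,\hat c_j}H^u_{\hat y,\hat c}\bigl(H^s_{\hat y,\hat y_j}\bigr)^{-1}
\]
up to $C_H\,d_\theta^{\theta_s}$. The outer stable holonomies tend to $\Id$ by \eqref{eq:leafwise-package}.

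Alternatively, and more simply, continuity follows directly from the defining limit $H^u=\lim\hat A^{-n}(\cdot)^{-1}\hat A^{-n}(\cdot)$. The convergence is uniform in the base points by fiber bunching (the standard Bonatti--Viana argument), and each approximant is jointly continuous. I would use this uniform-limit argument as the main route, since it avoids the case distinction about whether $\hat y_j$ lies on the stable set of $\hat y$. Multiplying the three convergent factors gives $\mathcal G_j\to\mathcal G$.
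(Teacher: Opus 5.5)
Your proposal is correct and is essentially the paper's proof: the same point $\hat c_j=[\hat y_j,\hat c]$, the same path $\mathcal G_j$, and convergence from continuity of $\hat A$ together with joint continuity of the canonical holonomies, which you rightly justify via the uniform Bonatti--Viana limit (the paper simply cites ``continuity of the holonomies''). Drop the rectangle-estimate detour: \eqref{eq:rectangle-package} bounds the error by the unstable side $d_\theta(\hat y,\hat c)^{\theta_s}$, which is fixed and does not tend to zero, so that route cannot give convergence on its own (and the auxiliary point it would need is $[\hat y_j,\hat y]$, not $[\hat y,\hat y_j]$).
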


\begin{proof}
Put $\hat c_j=[\hat y_j,\hat c]$.  Then
$\hat c_j\in W^u_{\rm loc}(\hat y_j)\cap W^s_{\rm loc}(\hat c)$ and
$\hat c_j\to\hat c$.  Since $L$ is fixed,
$\hat b_j:=\hat f^L\hat c_j\to\hat b=\hat f^L\hat c$ and, for all
large $j$, $\hat b_j$ lies in the same local stable set of $\hat z$ as
$\hat b$.  Hence
\[
 {\mathcal G}_j
 =H^s_{\hat b_j,\hat z}\hat A^L(\hat c_j)H^u_{\hat y_j,\hat c_j}
\]
is a holonomy path.  H\"older continuity of $\hat A$ and continuity of the
holonomies give ${\mathcal G}_j\to{\mathcal G}$.
\end{proof}

\begin{lemma}
\label{lem:exact-concat}
Let $\hat a\in W^u_{\rm loc}(\hat x)$ and $\hat y=\hat f^N\hat a$ and consider
\[
 \mathcal D=\hat A^N(\hat a)H^u_{\hat x,\hat a}:
 E_{\hat x}\to E_{\hat y},
\]
so that the terminal stable leg of $\mathcal D$ is the identity.  If
${\mathcal G}:E_{\hat y}\to E_{\hat z}$ is any holonomy path, then the
composition ${\mathcal G}{\mathcal D}$ is itself a holonomy path and its path
map is exactly the matrix product ${\mathcal G}{\mathcal D}$.
\end{lemma}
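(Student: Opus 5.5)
The plan is to unwind the definition of a holonomy path and then use the composition and equivariance identities for $H^s$ and $H^u$ from Section~\ref{sec:holonomies}. By hypothesis, $\mathcal G=H^s_{\hat b,\hat z}\hat A^L(\hat c)H^u_{\hat y,\hat c}$ with $\hat c\in W^u_{\rm loc}(\hat y)$ and $\hat b=\hat f^L\hat c\in W^s_{\rm loc}(\hat z)$. The obstruction to concatenation is the unstable leg $H^u_{\hat y,\hat c}$ sitting in the middle, after $\hat A^N(\hat a)$. We will push this leg backwards through the cocycle iterate using unstable equivariance, which turns it into an unstable leg at $\hat a$ that merges with $H^u_{\hat x,\hat a}$.

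Concretely, set $\hat c'=\hat f^{-N}\hat c$. Since $\hat c\in W^u_{\rm loc}(\hat f^N\hat a)$, the point $\hat c'$ lies on the unstable set of $\hat a$, and backward iteration contracts unstable leaves. So $\hat c'\in W^u_{\rm loc}(\hat a)$, and hence $\hat c'\in W^u_{\rm loc}(\hat x)$, because $\hat a\in W^u_{\rm loc}(\hat x)$ and local unstable sets are equivalence classes determined by the past. Iterating the equivariance relation $H^u_{\hat f\hat p,\hat f\hat q}\hat A(\hat p)=\hat A(\hat q)H^u_{\hat p,\hat q}$ $N$ times gives
\[
 H^u_{\hat y,\hat c}\hat A^N(\hat a)=\hat A^N(\hat c')H^u_{\hat a,\hat c'}.
\]
Here all intermediate pairs $\hat f^j\hat a,\hat f^j\hat c'$ lie on common local unstable sets. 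For the symbolic metric, if $\hat f^N\hat a$ and $\hat c$ agree in all nonpositive coordinates, then so do $\hat f^j\hat a$ and $\hat f^j\hat c'$ for $0\le j\le N$.

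Composing, and using $H^u_{\hat a,\hat c'}H^u_{\hat x,\hat a}=H^u_{\hat x,\hat c'}$ together with $\hat A^L(\hat c)\hat A^N(\hat c')=\hat A^{N+L}(\hat c')$, we obtain
\[
 \mathcal G\mathcal D=H^s_{\hat b,\hat z}\hat A^{N+L}(\hat c')H^u_{\hat x,\hat c'},
\]
where $\hat f^{N+L}\hat c'=\hat b\in W^s_{\rm loc}(\hat z)$. This is precisely a holonomy path from $\hat x$ to $\hat z$, with orbit length $N+L$ and starting point $\hat c'$, and its path map equals the matrix product $\mathcal G\mathcal D$.

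The only point requiring care is checking that the intermediate points stay in \emph{local} unstable sets, so that the canonical holonomies and their equivariance apply. In the two-sided shift this is immediate, since $W^u_{\rm loc}$ means agreement of all coordinates $n\le0$, and that condition is preserved under $\hat f^{-1}$ and, up to time $N$, along the orbit segment. If a looser notion of local leaf were in use, one would instead invoke the global unstable holonomy, defined by the same limit formula.
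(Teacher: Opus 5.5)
Your proof is correct and follows the paper's argument essentially verbatim. You pull the middle unstable leg back through $\hat A^N(\hat a)$ by equivariance to the point $\hat f^{-N}\hat c\in W^u_{\rm loc}(\hat a)$, merge it with $H^u_{\hat x,\hat a}$ via the composition law, and combine the iterates into $\hat A^{N+L}(\hat f^{-N}\hat c)$; your explicit check that the intermediate pairs stay in common local unstable sets is a detail the paper leaves implicit.
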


\begin{proof}
Let us consider
\[
 {\mathcal G}=H^s_{\hat b,\hat z}\hat A^L(\hat c)H^u_{\hat y,\hat c}
\]
with $\hat c\in W^u_{\rm loc}(\hat y)$ and $\hat b=\hat f^L\hat c\in  W^s_{\rm loc}(\hat z)$.
Put $\hat a'=\hat f^{-N}\hat c$. Since $\hat c\in W^u_{\rm loc}(\hat y)$ and
$\hat y=\hat f^N\hat a$, we get that
\[
 \hat a'
 =\hat f^{-N}\hat c
 \in W^u_{\rm loc}(\hat f^{-N}\hat y)
 =W^u_{\rm loc}(\hat a).
\]
Thus, the equivariance of unstable holonomies gives
\[
 H^u_{\hat y,\hat c}\hat A^N(\hat a)
 =\hat A^N(\hat a')H^u_{\hat a,\hat a'}.
\]
Moreover, since $\hat x,\hat a,\hat a'$ lie on the same unstable set, the composition
law gives
$H^u_{\hat a,\hat a'}H^u_{\hat x,\hat a}=H^u_{\hat x,\hat a'}$. Combining these observations we get that
\[
 {\mathcal G}{\mathcal D}
 =H^s_{\hat b,\hat z}\hat A^{N+L}(\hat a')H^u_{\hat x,\hat a'},
\]
which is a holonomy path of the required form.
\end{proof}

\begin{lemma}\label{lem:periodize}
Assume that $\hat p$ is a fixed point of $\hat f$ and take  $\hat a_j\in W^u_{\rm loc}(\hat p)$ and $\hat b_j=\hat f^{M_j}\hat a_j\in W^s_{\rm loc}(\hat p)$. Write $\hat a_j =(a_{j,t})_{t\in \mathbb Z}$ and let
\[
 \mathcal B_j
 =H^s_{\hat b_j,\hat p}\hat A^{M_j}(\hat a_j)
 H^u_{\hat p,\hat a_j}:E_{\hat p}\to E_{\hat p},\\
 \]
be based holonomy loops with $M_j\to\infty$ and $\hat a_j\to\hat p$.  Let
$\hat q_j$ be the periodic point obtained by repeating the admissible starting word of $\hat a_j$ of length $M_j$, that is, $\hat q_j$ is obtained repeating the word $(a_{j,0},a_{j,1},\ldots,a_{j,M_j-1})$.  Then there are invertible maps $H_{1,j},H_{2,j}$ such
that
\begin{equation}\label{eq:periodize-factor}
 \hat A^{M_j}(\hat q_j)=H_{1,j}{\mathcal B}_jH_{2,j} \; \text{ with }\; R_j:=H_{2,j}H_{1,j}\longrightarrow\Id.
\end{equation}
The maps $H_{1,j}^{\pm1},H_{2,j}^{\pm1}$ are uniformly bounded.  In
particular $\hat A^{M_j}(\hat q_j)$ has the same eigenvalues, with
algebraic multiplicity, as $R_j{\mathcal B}_j$.
\end{lemma}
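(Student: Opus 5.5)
The plan is to build an auxiliary point $\hat z_j$ that shadows both $\hat a_j$ and $\hat q_j$. I will then rewrite $\hat A^{M_j}(\hat q_j)$ via holonomy equivariance as $H_{1,j}\mathcal B_jH_{2,j}$, so that $R_j=H_{2,j}H_{1,j}$ becomes a product of holonomies around one local product rectangle at $\hat p$. The rectangle estimate \eqref{eq:rectangle-package} will then give $R_j\to\Id$.

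\textbf{Combinatorics.} Let $s$ be the symbol of the fixed point, so $\hat p=(\ldots ss\ldots)$.
\begin{itemize}
\item Since $\hat a_j\in W^u_{\rm loc}(\hat p)$, we have $a_{j,t}=s$ for $t\le 0$.
\item Since $\hat b_j\in W^s_{\rm loc}(\hat p)$, we have $a_{j,t}=s$ for $t\ge M_j$.
\item Since $\hat a_j\to\hat p$, there are $K_j\to\infty$ (which we may take with $K_j\le M_j$) such that $a_{j,t}=s$ for $0\le t<K_j$.
\end{itemize}
In particular $a_{j,M_j-1}\to s=a_{j,0}$ is admissible, so $\hat q_j$ is a well-defined periodic point with $\hat f^{M_j}\hat q_j=\hat q_j$. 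Let $\hat z_j$ be the point with past $s^{\infty}$ (i.e.\ $z_{j,t}=s$ for $t<0$) and the same future as $\hat q_j$.

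The following relations then hold.
\begin{itemize}
\item $\hat z_j\in W^s_{\rm loc}(\hat q_j)$, and $\hat z_j\in W^u_{\rm loc}(\hat p)=W^u_{\rm loc}(\hat a_j)$.
\item $d_\theta(\hat z_j,\hat p)\le\theta^{K_j}$, because the future of $\hat q_j$ begins with $s^{K_j}$.
\item Put $\hat w_j:=\hat f^{M_j}\hat z_j$. Since $\hat z_j$ agrees with $\hat a_j$ on all coordinates $t<M_j$, the point $\hat w_j$ has the same past as $\hat b_j$, so $\hat w_j\in W^u_{\rm loc}(\hat b_j)$.
\item The future of $\hat w_j$ is that of $\hat f^{M_j}\hat q_j=\hat q_j$, so $\hat w_j\in W^s_{\rm loc}(\hat q_j)\cap W^s_{\rm loc}(\hat z_j)$.
\end{itemize}
Thus $(\hat p,\hat b_j,\hat z_j,\hat w_j)$ is a local product rectangle in the sense used in \eqref{eq:rectangle-package}.

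\textbf{Factorization.} Stable equivariance with $\hat z_j\in W^s_{\rm loc}(\hat q_j)$ gives
\[
\hat A^{M_j}(\hat q_j)=H^s_{\hat w_j,\hat q_j}\hat A^{M_j}(\hat z_j)H^s_{\hat q_j,\hat z_j}.
\]
Unstable equivariance with $\hat z_j\in W^u_{\rm loc}(\hat a_j)$ gives
\[
\hat A^{M_j}(\hat z_j)=H^u_{\hat b_j,\hat w_j}\hat A^{M_j}(\hat a_j)H^u_{\hat z_j,\hat a_j}.
\]
Finally, the definition of $\mathcal B_j$ gives
\[
\hat A^{M_j}(\hat a_j)=H^s_{\hat p,\hat b_j}\mathcal B_jH^u_{\hat a_j,\hat p}.
\]
Combining these, and using the composition law, yields \eqref{eq:periodize-factor} with
\[
H_{1,j}=H^s_{\hat w_j,\hat q_j}H^u_{\hat b_j,\hat w_j}H^s_{\hat p,\hat b_j},\qquad
H_{2,j}=H^u_{\hat z_j,\hat p}H^s_{\hat q_j,\hat z_j}.
\]
Uniform boundedness of $H_{1,j}^{\pm1}$ and $H_{2,j}^{\pm1}$ follows from the uniform bounds on local holonomies, which are continuous on a compact set of pairs by \eqref{eq:leafwise-package}.

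\textbf{Convergence $R_j\to\Id$.} By the composition law, $H^s_{\hat q_j,\hat z_j}H^s_{\hat w_j,\hat q_j}=H^s_{\hat w_j,\hat z_j}$, hence
\[
R_j=H^u_{\hat z_j,\hat p}\,H^s_{\hat w_j,\hat z_j}\,H^u_{\hat b_j,\hat w_j}H^s_{\hat p,\hat b_j}.
\]
Apply \eqref{eq:rectangle-package} to the rectangle $(\hat p,\hat b_j,\hat z_j,\hat w_j)$:
\[
H^u_{\hat b_j,\hat w_j}H^s_{\hat p,\hat b_j}=H^s_{\hat z_j,\hat w_j}H^u_{\hat p,\hat z_j}+O\bigl(d_\theta(\hat p,\hat z_j)^{\theta_s}\bigr).
\]
Substituting, the main term collapses to $\Id$ by the composition law. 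The error is multiplied by uniformly bounded holonomies, so
\[
\|R_j-\Id\|\lesssim\theta^{K_j\theta_s}\to0.
\]
The eigenvalue statement then follows because $H_{1,j}\mathcal B_jH_{2,j}$ is conjugate to $H_{2,j}H_{1,j}\mathcal B_j=R_j\mathcal B_j$, via conjugation by $H_{2,j}$.

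\textbf{Main obstacle.} The delicate step is arranging the loop so that only a single rectangle appears. The point $\hat q_j$ itself need not be close to $\hat p$, because its past near $0$ is the tail of the word, which can be arbitrary. The naive estimate comparing $\hat q_j$ with $\hat p$ therefore fails. The trick is to merge the two stable legs through $\hat q_j$ into one stable holonomy $H^s_{\hat w_j,\hat z_j}$, so that the only required smallness is $d_\theta(\hat p,\hat z_j)\to0$.

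I should double-check that \eqref{eq:rectangle-package} applies to rectangles whose sides are not small. The pairs $(\hat z_j,\hat w_j)$ and $(\hat p,\hat b_j)$ are only local-stable related, not close. This is consistent with the way the estimate is stated, which involves only $d_\theta(\hat x,\hat x')$.
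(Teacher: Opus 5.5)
Your proof is correct, but it uses a different auxiliary point and a different regularity input than the paper.

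\textbf{The paper's route.} The paper uses the opposite bracket. At every time $i$ it sets $\hat z_{i,j}=[\hat q_{i,j},\hat a_{i,j}]$, the point with the past of $\hat q$ and the future of $\hat a$. It then uses the transition maps $K_{i,j}=H^s_{\hat z_{i,j},\hat a_{i,j}}H^u_{\hat q_{i,j},\hat z_{i,j}}$ to write $\hat A^{M_j}(\hat q_j)=K_{M_j,j}^{-1}\hat A^{M_j}(\hat a_j)K_{0,j}$. It proves $R_j\to\Id$ from three separate limits:
\begin{enumerate}
\item $K_{M_j,j}\to\Id$, because $d_\theta(\hat q_j,\hat b_j)\to0$; this needs both $M_j\to\infty$ and $\hat a_j\to\hat p$.
\item $H^u_{\hat p,\hat a_j}\to\Id$.
\item $\|K_{0,j}-H^s_{\hat b_j,\hat p}\|\to0$, by continuity of $(\hat x,\hat y)\mapsto H^s_{\hat x,\hat y}$ jointly in both endpoints.
\end{enumerate}
So the paper avoids the obstacle you identify by comparing $\hat q_j$ with $\hat b_j$ rather than with $\hat p$.

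\textbf{Your route.} Your choice $\hat z_j=[\hat a_j,\hat q_j]$ places the auxiliary point next to $\hat p$. This makes $R_j$ exactly the holonomy loop around the single rectangle $(\hat p,\hat b_j,\hat z_j,\hat w_j)$. One application of \eqref{eq:rectangle-package} then finishes the proof and gives the explicit rate $\|R_j-\Id\|=O(\theta^{\theta_s K_j})$. You never need to compare $\hat q_j$ with $\hat b_j$.

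\textbf{Trade-off.} You rely on the quantitative transverse estimate, whereas the paper only needs qualitative joint continuity of the canonical holonomies. Since \eqref{eq:rectangle-package} is among the standing estimates here, and only $R_j\to\Id$ is used later in Proposition~\ref{prop:all-path}, either route suffices.

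\textbf{A small detail.} Agreement of $\hat z_j$ and $\hat a_j$ on the coordinates $t<M_j$ only gives agreement of $\hat w_j$ and $\hat b_j$ on $t\le-1$. The zero coordinate also matches, because $z_{j,M_j}=q_{j,0}=s=a_{j,M_j}$. This is what puts $\hat w_j$ in $W^u_{\rm loc}(\hat b_j)$. It also justifies iterating the unstable equivariance all the way to time $M_j$.
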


\begin{proof}
We start observing that, since $\hat a_j\in W^u_{\rm loc}(\hat p)$ and
$\hat f^{M_j}\hat a_j\in W^s_{\rm loc}(\hat p)$, the transition from the
last symbol $a_{j,M_j-1}$ of the word $(a_{j,0},a_{j,1},\ldots,a_{j,M_j-1})$ to its first symbol $a_{j,0}$ is admissible and  $\hat q_j$ is well-defined. 

Now, for $0\le i\le M_j$ put
$\hat a_{i,j}=\hat f^i\hat a_j$ and
$\hat q_{i,j}=\hat f^i\hat q_j$. Let $\hat z_{i,j}=[\hat q_{i,j},\hat a_{i,j}]$ and consider $ K_{i,j}:E_{\hat q_{i,j}}\longrightarrow E_{\hat a_{i,j}}
$ given by
\[
 K_{i,j}=H^s_{\hat z_{i,j},\hat a_{i,j}}
 H^u_{\hat q_{i,j},\hat z_{i,j}}.
\]
Note that the bracket is defined because the two points have the same zero-symbol.
Thus, holonomy equivariance gives us that
\[
 K_{i+1,j}\hat A(\hat q_{i,j})
 =\hat A(\hat a_{i,j})K_{i,j},
\]
and therefore, applying this identity inductively, we get that
\begin{equation*}\label{eq:transition-product}
 \hat A^{M_j}(\hat q_j)
 =K_{M_j,j}^{-1}\hat A^{M_j}(\hat a_j)K_{0,j}.
\end{equation*}
Consequently, since
\[
 \hat A^{M_j}(\hat a_j)
 =(H^s_{\hat b_j,\hat p})^{-1}{\mathcal B}_j
 (H^u_{\hat p,\hat a_j})^{-1},
\]
we may take
\[
 H_{1,j}=K_{M_j,j}^{-1}(H^s_{\hat b_j,\hat p})^{-1}
 \; \text{ and }\;
 H_{2,j}=(H^u_{\hat p,\hat a_j})^{-1}K_{0,j}.
\]
Note that all these maps and their inverses are uniformly bounded and, moreover, satisfy the first equality in \eqref{eq:periodize-factor}.

It remains to prove that $H_{2,j}H_{1,j}\to\Id$. We start observing that, since $\hat q_j$ is obtained by repeating the word $(a_{j,0},a_{j,1},\ldots,a_{j,M_j-1})$, the points $\hat q_j$ and
$\hat b_j=\hat f^{M_j}\hat a_j$ agree on the $M_j$ symbols immediately to
the left of the zero coordinate. Their nonnegative coordinates agree for as
many symbols as $\hat a_j$ and the fixed point $\hat p$ agree. Since
$M_j\to\infty$ and $\hat a_j\to\hat p$, we obtain
\begin{equation}\label{eq:q-b-close}
 d_\theta(\hat q_j,\hat b_j)\longrightarrow0.
\end{equation}
At time $M_j$ the transition map $K_{M_j,j}$ is therefore a product of two
holonomies whose endpoints become arbitrarily close, and hence
$K_{M_j,j}\to\Id$.

Now, observe that the periodic point $\hat q_j$ and $\hat a_j$ agree at the coordinates $0,\ldots,M_j-1$. Thus, since
$\hat z_{0,j}=[\hat q_j,\hat a_j]$ has the future of $\hat a_j$ and the past of $\hat q_j$, this gives
\[
 d_\theta(\hat q_j,\hat z_{0,j})\le \theta^{M_j}\longrightarrow0.
\]
Together with \eqref{eq:q-b-close} this also gives
$d_\theta(\hat z_{0,j},\hat b_j)\to0$. Therefore, since $\hat a_j\to\hat p$, uniform
continuity of the holonomies on the compact local stable and unstable sets yields
\[
 H^u_{\hat q_j,\hat z_{0,j}}\longrightarrow\Id \; \text{ and } \;
 \|H^s_{\hat z_{0,j},\hat a_j}
          -H^s_{\hat b_j,\hat p}\|\longrightarrow0.
\]
Consequently,
\begin{equation*}\label{eq:K0-limit}
 \bigl\|K_{0,j}-H^s_{\hat b_j,\hat p}\bigr\|\longrightarrow0.
\end{equation*}
Finally, $H^u_{\hat p,\hat a_j}\to\Id$ because $\hat a_j\to\hat p$ on the
local unstable set. Substituting these three estimates in
\[
 H_{2,j}H_{1,j}
 =(H^u_{\hat p,\hat a_j})^{-1}K_{0,j}K_{M_j,j}^{-1}
 (H^s_{\hat b_j,\hat p})^{-1}
\]
gives $R_j\to\Id$.

The last assertion follows directly by cyclic similarity:
\[
 H_{1,j}{\mathcal B}_jH_{2,j} \sim {\mathcal B}_jH_{2,j}H_{1,j} \sim R_j{\mathcal B}_j.
\]
\end{proof}

\section{A linear-algebraic reduction}\label{sec:reduction-main}
In this section we perform a linear-algebraic reduction of our cocycle. The
purpose is to eliminate the upper-triangular shearing and the finite
permutation of invariant components, while preserving the periodic spectral
data, the failure of domination, and the quantitative estimates required in
the rest of the proof. The construction and its properties are proved in
Appendix~\ref{app:reduction}. We therefore use the reduction here without
interrupting the main dynamical argument with its algebraic details.

Let $\hat p=(p_t)_{t\in \mathbb Z}$ be a periodic point of $\hat f$, which, after passing to a suitable iterate, we assume to be a fixed point.

\begin{proposition}\label{prop:linear-reduction}
If Theorem~\ref{thm:main} fails, then after a bounded H\"older change of
coordinates, passage to a finite permutation extension and a transitive
component, passage to an iterate, and removal of purely upper-triangular
shearing, one obtains another counterexample for which:
\begin{enumerate}[label=\textup{(\roman*)}]
\item the periodic-gap hypothesis is preserved. Moreover, the algebraic
changes preserve the eigenvalues of corresponding periodic return matrices;
\item failure of index-$k$ domination is preserved;
\item the estimates
\eqref{eq:leafwise-package}--\eqref{eq:rectangle-package} and, when $d\ge3$,
\eqref{eq:eta-bunching-consequence}, remain valid with the same exponents;
\item if a finite family of subspaces in an exterior power is permuted by all
based holonomy loops at $\hat p$, then every based holonomy loop fixes each member
of the family individually;
\item if a subspace $W$ of an exterior power is invariant under every based
holonomy loop at $\hat p$, then it has a complementary subspace $W'$ which is also
invariant under every based holonomy loop.
\end{enumerate}
\end{proposition}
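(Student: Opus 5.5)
The natural route is to study the \emph{holonomy group} $\mathcal G_{\hat p}\subset \GL(d,\R)$, defined as the closure of the semigroup of all based holonomy loops at $\hat p$. By Lemma~\ref{lem:exact-concat} and Lemma~\ref{lem:path-perturb}, based loops compose and can be perturbed, and a standard argument shows their closure is a closed subgroup. We then look at its Zariski closure $\mathbf G$ and reduce $\mathbf G$ in three steps, each realized by an operation on the cocycle that preserves (i)--(iii). Properties (i)--(iii) will be checked operation by operation. Conjugating by a bounded H\"older map $\hat x\mapsto C(\hat x)$ changes holonomies by conjugation. It also changes periodic return matrices by conjugation, so their eigenvalues are unchanged, and it changes singular values of $\hat A^n$ only by bounded factors, so failure of domination persists by Theorem~\ref{thm:BG-domination}; the H\"older exponents in \eqref{eq:leafwise-package}--\eqref{eq:eta-bunching-consequence} survive because $C$ and $C^{-1}$ are bounded and H\"older of exponent $\ge\alpha$. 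Passing to an iterate multiplies exponents, and therefore the constant $c$, by the period, as in Remark~\ref{rem:mixing-reduction}. Passing to finite extensions preserves periodic gaps and non-domination, by Section~\ref{sec:finite extensions}.

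\textbf{Step 1 (item (iv)).} Let $\mathbf G^0$ be the identity component of $\mathbf G$. The quotient $\mathbf G/\mathbf G^0$ is finite, and loops act on any finite family of subspaces permuted by $\mathcal G_{\hat p}$ through this finite quotient. Transport via unstable holonomy, then the cocycle, then stable holonomy gives a locally constant (after a higher-block recoding) map $\beta$ into $\operatorname{Sym}(F)$ with $F=\mathbf G/\mathbf G^0$. Passing to the finite extension by $\beta$ and restricting to a transitive component that still fails domination, the loop group of the new system at a lift of $\hat p$ becomes the preimage of the stabilizer, which is contained in $\mathbf G^0$. A connected group cannot nontrivially permute a finite set. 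Since the exterior powers are finitely many representations, this handles all families simultaneously, which gives (iv). We then pass to a further iterate so that the lift of $\hat p$ is again fixed.

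\textbf{Step 2 (item (v)), the main obstacle.} A connected algebraic group $\mathbf G^0$ has Levi decomposition $\mathbf G^0=\mathbf L\ltimes \mathbf U$ with $\mathbf U$ unipotent, and invariant subspaces need complements only for the reductive part $\mathbf L$. The plan is to find a flag of $\mathbf G^0$-invariant subspaces and, using holonomy invariance, spread it to a H\"older family of flags over $\widehat\Sigma$ that is $s$- and $u$-holonomy invariant and $\hat A$-invariant. We then choose H\"older coordinates adapted to this flag, in which $\hat A$ is block upper-triangular, and replace $\hat A$ by its block-diagonal part. This is the ``removal of purely upper-triangular shearing.''

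Several things then have to be checked, and this is where I expect the difficulty. First, the block-diagonal cocycle has the same periodic eigenvalues, since the return matrices are block triangular. Second, it still satisfies the bunching estimates, because the block-diagonal projection is $1$-Lipschitz and contracts bolicity up to a constant. Third, and hardest, it still fails index-$k$ domination. For this I would argue by contradiction. If the diagonal part were dominated, then the unipotent shearing is $\mathbf U$-valued and hence of subexponential size in the holonomy-invariant coordinates; combined with the strong bunching estimate \eqref{eq:eta-bunching-consequence}, a graph-transform/cone argument would show that the original cocycle is dominated too.

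Once the new loop group is reductive, complete reducibility of its action on each $\Lambda^q\R^d$ gives (v). Iterating Steps~1--2 finitely many times, with the dimension of the algebraic group decreasing, yields the required counterexample.
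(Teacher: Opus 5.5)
Your overall architecture matches the paper's (Appendix~\ref{app:reduction}). You gauge every fiber to $E_{\hat p}$ through cocycle/holonomy arrows, so that the gauged cocycle $\hat B$ and all holonomies take values in the Zariski closure $H$ of the based loop group. You use the filtration by $U$-fixed vectors ($U=R_u(H)$), with $L$-invariant complements $V_i$, to make everything block upper triangular, and replace $\hat B$ by its block-diagonal (Levi) part. You kill $H/H^\circ$ by the finite extension $\beta=\chi\circ\hat B$, and iterate while the dimension of the closure drops. Then (iv) follows from connectedness and (v) from complete reducibility. The genuine gap is the step you single out yourself: preservation of non-domination after removing the shear. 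The mechanism you propose for it rests on a false premise.

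Being $\mathbf U$-valued does not make the shear subexponential. Domination is about the products $\hat B^n$. Write $\hat B^n=\ell_nu_n$ with $\ell_n$ the block-diagonal part; then $u_n$ is a product of bounded one-step unipotent factors conjugated by partial Levi products, and it can grow exponentially.
\begin{itemize}
\item For the constant cocycle $\begin{pmatrix} a&t\\ 0&b\end{pmatrix}$ with $|b|>|a|$, the off-diagonal entry of $\ell_n^{-1}\hat B^n$ is $t(a^n-b^n)/((a-b)a^n)$, which is of order $|b/a|^n$.
\item Concatenating an orbit segment where $|b|>|a|$ with one where $|a|>|b|$ makes both orderings $\ell u$ and $u\ell$ exponentially large.
\end{itemize}
The bunching estimate \eqref{eq:eta-bunching-consequence} bounds $\bol(\hat A^n)$, not this shear, so it supplies no input. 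A bounded but non-small perturbation of a dominated cocycle is not dominated in general, so your cone/graph-transform step has nothing to run on as stated.

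The conclusion is nevertheless true. The missing idea is a constant rescaling that uses only the one-step triangular structure. Fix $a_1<\cdots<a_m$ and set $D_n=\bigoplus_i e^{a_in}\Id_{V_i}$.
\begin{itemize}
\item The $(i,j)$ block of $D_n\hat B(\hat x)D_n^{-1}$ is $e^{(a_i-a_j)n}\hat B_{ij}(\hat x)$. Since $\hat B$ has compact range, $D_n\hat BD_n^{-1}\to\pi_{\rm ss}\circ\hat B$ uniformly.
\item If the block-diagonal cocycle were index-$k$ dominated, $C^0$-openness of domination would make $D_n\hat BD_n^{-1}$ dominated for one fixed large $n$.
\item Conjugating by the fixed matrix $D_n$ changes $\sigma_{k+1}/\sigma_k$ of each $\hat B^m(\hat x)$ by at most the factor $\bol(D_n)^2$. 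So $\hat B$, hence $\hat A$, would be dominated by Theorem~\ref{thm:BG-domination}.
\end{itemize}
Neither bunching nor any growth bound on the shear is needed.

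Two smaller points:
\begin{itemize}
\item The Euclidean closure of the semigroup of based loops need not be a group. Use the groupoid generated by $\hat A^{\pm1}$ and the holonomies, or pass directly to the Zariski closure, which is a group.
\item A gauge built from holonomies is only $\theta_s$-H\"older across leaves (through \eqref{eq:rectangle-package}), not of exponent $\ge\alpha$. So the exponent that survives is $\theta_s$, which is what the later argument actually uses.
\end{itemize}
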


We apply Proposition~\ref{prop:linear-reduction} once and then rename the
resulting cocycle and base as $\hat A$ and $\hat f$.  All bad orbit segments below are chosen after this reduction.  This order is important: the final contradiction must use the same bad sequence that produces both the limiting singular covector and the invariant component.

Thus, from now on, let $\hat{p}$ be fixed and assume that the conclusions (i)-(v) of Proposition \ref{prop:linear-reduction} hold for $(\hat A,\hat f)$.

\section{Beginning the proof of Theorem \ref{thm:main}: failure of domination and a singular-value plateau}\label{sec:bad-blocks}

In this section we begin the proof of Theorem \ref{thm:main}. We assume, for a contradiction, that the cocycle is not
index-$k$ dominated. This gives long orbit segments on which the $k$-th and
$(k+1)$-st singular values are too close on the exponential scale. We place
these bad segments inside paths based at the fixed periodic point $\hat p$ and extract a limiting block of singular values with a flat part containing the index $k$.  All the notation introduced here will be fixed for the remainder of the argument

\subsection{Bad orbits and paths}
Assume, toward a contradiction, that $\hat A$ is not index-$k$ dominated.  By the
Bochi--Gourmelon singular-value characterization given in Theorem \ref{thm:BG-domination}, there
are $\hat x_j=(x_{j,t})_{t\in\mathbb Z}\in\widehat\Sigma$ and $n_j\to\infty$ such that
\begin{equation}\label{eq:bad}
 \log\frac{\sigma_k(\hat A^{n_j}(\hat x_j))}
               {\sigma_{k+1}(\hat A^{n_j}(\hat x_j))}
 =o(n_j)
\end{equation}
where $o(n_j)$ means the usual little-o notation. We now construct paths based at $\hat p=(p_t)_{t\in\mathbb Z}$ associated with the bad orbit blocks $(x_{j,0}x_{j,1}\cdots x_{j,n_j})$ coming from $\hat x_j$.  Choose integers
$m_j\to\infty$ with $m_j=o(n_j)$, for example
$m_j=\lfloor\sqrt{n_j}\rfloor$.  Fix also a point
$\hat y=(y_t)_{t\in\mathbb Z}\in\widehat\Sigma$. Its role is only to provide a common limiting endpoint for the extended bad blocks that we build below. Since, after the preliminary reductions (recall Remark \ref{rem:mixing-reduction}), we
may work on a mixing component, there is a uniform constant $L$ such that any
two admissible endpoint symbols can be joined by a bridge containing at most
$L$ inserted symbols.  Passing to a subsequence, if desired, we may also
assume that the endpoint symbols $x_{j,0}$ and $x_{j,n_j}$ are fixed.

In order to build paths based at $\hat p=(p_t)_{t\in\mathbb Z}$, we construct the points $\hat a_j$ that realize the bad orbit blocks. So, choose an admissible bridge
$b_j^-$ from the symbol $p_{m_j}$ to $x_{j,0}$ and an admissible bridge
$b_j^+$ from $x_{j,n_j}$ to $y_{-m_j}$.  Let $\ell_j^\pm\le L$ denote the
numbers of \emph{inserted} symbols in these two bridges.  Define the
bi-infinite sequence $\hat a_j=(a_{j,t})_{t\in\mathbb Z}$ by the following
concatenation:
\[
 \underbrace{\cdots p_{-2}p_{-1}p_0p_1\cdots p_{m_j}}_{\text{past of $\hat p$ and a long future block of $\hat p$}}
 \; b_j^- \;
 \underbrace{x_{j,0}x_{j,1}\cdots x_{j,n_j}}_{\text{copied bad word}}
 \; b_j^+ \;
 \underbrace{y_{-m_j}\cdots y_{-1}y_0y_1\cdots}_{\text{tail of $\hat y$}}.
\]
Equivalently, $a_{j,t}=p_t$ for every $t\le m_j$, the copied bad word starts
at the time
\begin{equation*}\label{eq:tj-definition}
 t_j:=m_j+\ell_j^-+1,
\end{equation*}
and
\begin{equation}\label{eq:copied-word}
 a_{j,t_j+r}=x_{j,r},\qquad 0\le r\le n_j.
\end{equation}
After the second bridge we place the symbol $y_{-m_j}$ and then continue with
the future of $\hat y$.  If $N_j$ denotes the time occupied by the copied
symbol $y_0$, then, with the above convention for the bridge lengths,
\begin{equation*}\label{eq:Nj-definition}
 N_j=t_j+n_j+\ell_j^++1+m_j
     =n_j+2m_j+\ell_j^-+\ell_j^++2.
\end{equation*}
Finally set
\[
 \hat y_j:=\hat f^{N_j}\hat a_j.
\]
These definitions immediately give
\[
 N_j=n_j+o(n_j),\quad
 \hat a_j\in W^u_{\rm loc}(\hat p),\quad \hat a_j\to\hat p \; \text{ and }
 \; \hat y_j\to\hat y.
\]
Indeed, $\hat a_j$ agrees with $\hat p$ at every coordinate $t\le m_j$,
while $\hat y_j$ agrees with $\hat y$ at every coordinate $t\ge -m_j$.

The point at the beginning of the copied bad word is now completely explicit:
\begin{equation}\label{eq:xjprime-definition}
 \hat x'_j:=\hat f^{t_j}\hat a_j.
\end{equation}
By \eqref{eq:copied-word},
\begin{equation}\label{eq:xjprime-word-agreement}
 x'_{j,r}=x_{j,r},\qquad 0\le r\le n_j.
\end{equation}
Thus $\hat x'_j$ and $\hat x_j$ have the same $(n_j+1)$-``starting'' word.  Notice that
copying the endpoint symbol $x_{j,n_j}$ is useful: it guarantees the required
local unstable relation also at the terminal end of the $n_j$-step orbit
segment.

Using the initial unstable holonomy and taking
$\hat y_j=\hat f^{N_j}\hat a_j$ as the actual target (so that no terminal
stable holonomy is needed), define
\begin{equation*}\label{eq:Dj-explicit}
 D_j
 :=\hat A^{N_j}(\hat a_j)H^u_{\hat p,\hat a_j}
 :E_{\hat p}\longrightarrow E_{\hat y_j}.
\end{equation*}
This is the path map of the framed orbit segment of length $N_j$ and will play a fundamental role in our proof.

We next compare $D_j$ explicitly with the original bad matrix
$\hat A^{n_j}(\hat x_j)$.  First we isolate the copied block.  By the cocycle
identity and \eqref{eq:xjprime-definition},
\begin{equation}\label{eq:Dj-prefix-central-suffix}
 D_j=P_j^+\,\hat A^{n_j}(\hat x'_j)\,P_j^-,
\end{equation}
where
\begin{align}
 P_j^-
 &:=\hat A^{t_j}(\hat a_j)H^u_{\hat p,\hat a_j}
   :E_{\hat p}\to E_{\hat x'_j}\label{eq:Pminus}
\end{align}
and
 \begin{align}
P_j^+
 &:=\hat A^{N_j-t_j-n_j}
       (\hat f^{t_j+n_j}\hat a_j)
   :E_{\hat f^{n_j}\hat x'_j}\to E_{\hat y_j}.
   \label{eq:Pplus}
\end{align}
The two outside orbit pieces have lengths
\[
 t_j=m_j+O(1)=o(n_j)\; \text{ and }\;
 N_j-t_j-n_j=m_j+O(1)=o(n_j).
\]

Observe that, since $\hat A$ is H\"older continuous rather than locally constant, the equality of the symbolic words in \eqref{eq:xjprime-word-agreement} does not imply that
$\hat A^{n_j}(\hat x'_j)=\hat A^{n_j}(\hat x_j)$. We use the canonical
holonomies to compare these matrices.  Put
\[
 \hat z_j=[\hat x'_j,\hat x_j].
\]
Then $\hat z_j\in W^u_{\rm loc}(\hat x'_j)\cap W^s_{\rm loc}(\hat x_j)$,
and, because the two points agree also at the endpoint $n_j$,
$\hat f^{n_j}\hat z_j$ is simultaneously unstable-related to
$\hat f^{n_j}\hat x'_j$ and stable-related to
$\hat f^{n_j}\hat x_j$.  Iterating holonomy equivariance gives the exact
factorization
\begin{equation}\label{eq:word-distortion}
 \hat A^{n_j}(\hat x'_j)
 =L_j^{(0)}\hat A^{n_j}(\hat x_j)R_j^{(0)},
\end{equation}
with
\begin{align*}
 R_j^{(0)}
 &=H^s_{\hat z_j,\hat x_j}H^u_{\hat x'_j,\hat z_j}\label{eq:R0-explicit}
\end{align*}
and
 \begin{align*}
 L_j^{(0)}
 &=\bigl(H^u_{\hat f^{n_j}\hat x'_j,
                    \hat f^{n_j}\hat z_j}\bigr)^{-1}
   \bigl(H^s_{\hat f^{n_j}\hat z_j,
                    \hat f^{n_j}\hat x_j}\bigr)^{-1}.
\end{align*}
Note that all four holonomies above, and their inverses, are uniformly bounded.

Combining \eqref{eq:Dj-prefix-central-suffix} and
\eqref{eq:word-distortion}, we obtain the direct comparison
\begin{equation}\label{eq:Dj-direct-comparison}
 D_j=C_j^+\,\hat A^{n_j}(\hat x_j)\,C_j^-
\end{equation}
with $ C_j^+:=P_j^+L_j^{(0)}$ and $C_j^-:=R_j^{(0)}P_j^-$. 
Let
\[
 K_A:=\sup_{\hat x\in\widehat\Sigma}\bol(\hat A(\hat x))<\infty.
\]
Thus, since the lengths of the prefix and suffix are $o(n_j)$ and the canonical
holonomies are uniformly bounded, \eqref{eq:Pminus} and \eqref{eq:Pplus} imply
\begin{equation}\label{eq:Cpm-subexp}
 \log\bol(C_j^+)+\log\bol(C_j^-)=o(n_j).
\end{equation}
Now, for any $L,M,R\in\GL(d,\R)$, standard singular-value inequalities give
\begin{equation}\label{eq:gap-comparison-LMR}
 \left|
 \log\frac{\sigma_k(LMR)}{\sigma_{k+1}(LMR)}
 -\log\frac{\sigma_k(M)}{\sigma_{k+1}(M)}
 \right|
 \le \log\bol(L)+\log\bol(R).
\end{equation}
Applying \eqref{eq:gap-comparison-LMR} to
\eqref{eq:Dj-direct-comparison}, and using \eqref{eq:bad} and
\eqref{eq:Cpm-subexp}, yields
\[
 \log\frac{\sigma_k(D_j)}{\sigma_{k+1}(D_j)}
 =o(n_j).
\]
Thus, since $N_j/n_j\to1$, this is equivalent to
\begin{equation}\label{eq:badD}
 \log\frac{\sigma_k(D_j)}{\sigma_{k+1}(D_j)}=o(N_j).
\end{equation}
This is the precise sense in which $(D_j,N_j)$ is again a bad sequence: its
normalized $k$-th singular-value gap tends to zero, exactly as for the
original sequence
$(\hat A^{n_j}(\hat x_j),n_j)$.

The endpoint estimates needed later are $\hat a_j\to\hat p$ (for
Lemma~\ref{lem:periodize}) and $\hat y_j\to\hat y$ (for perturbing a path beginning at
$\hat y$  -- see Lemma \ref{lem:path-perturb}).

\subsection{Fast and slow spaces}
After passing to a subsequence, we may assume that the limits
\[
 a_i=\lim_{j\to\infty}\frac1{N_j}\log\sigma_i(D_j),
 \]
exist. In particular, we have $a_1\ge\cdots\ge a_d$. Moreover, by \eqref{eq:badD} we have that $a_k=a_{k+1}$.
Let
\begin{equation}\label{eq:plateau}
 a_r>a_{r+1}=\cdots=a_s>a_{s+1}\; \text{ with }\; r<k<s,
\end{equation}
be the \emph{maximal plateau} containing $k$ and $k+1$, with the endpoint inequality
omitted when $r=0$ or $s=d$. In what follows, we use the term \emph{genuine boundary} for an index $q\in\{1,\dots,d-1\}$ at an actual edge of the maximal singular-value plateau, so that $a_q>a_{q+1}$.

As observed above, at every genuine boundary $q\in\{r,s\}$ we have
$a_q>a_{q+1}$.  Hence
\[
 \log\frac{\sigma_q(D_j)}{\sigma_{q+1}(D_j)}
 =(a_q-a_{q+1})N_j+o(N_j),
\]
so for all sufficiently large $j$ there is a strict singular-value gap at
index $q$.  To explain the associated subspaces, choose a singular-value
decomposition 
\[
 D_j=U_j\Sigma_jV_j^*
\]
for $ D_j:E_{\hat p}\longrightarrow E_{\hat y_j}$
or, equivalently, orthonormal right singular vectors
$v_{1,j},\ldots,v_{d,j}\in E_{\hat p}$ and orthonormal left singular vectors
$u_{1,j},\ldots,u_{d,j}\in E_{\hat y_j}$ such that
\[
 D_jv_{i,j}=\sigma_i(D_j)u_{i,j}
 \;\text{ with }\;
 \sigma_1(D_j)\ge\cdots\ge\sigma_d(D_j)>0.
\]
The \emph{left fast $q$-plane} is
\begin{equation*}\label{eq:left-fast-plane}
 F_{q,j}:=\Span\{u_{1,j},\ldots,u_{q,j}\}
 \subset E_{\hat y_j}.
\end{equation*}
It is called ``left'' because it is defined by the left singular vectors,
which live in the target fiber, and ``fast'' because it corresponds to the
$q$ largest singular values.  The \emph{right slow $(d-q)$-plane} is
\begin{equation*}\label{eq:right-slow-plane}
 S_{q,j}:=\Span\{v_{q+1,j},\ldots,v_{d,j}\}
 \subset E_{\hat p}.
\end{equation*}
Similarly, it is called ``right'' because it is defined by the right singular vectors,
which live in the source fiber, and ``slow'' because it corresponds to the
$d-q$ smallest singular values.  Equivalently,
$S_{q,j}^{\perp}=\Span\{v_{1,j},\ldots,v_{q,j}\}$ is the right fast
$q$-plane and
\[
 D_j(S_{q,j}^{\perp})=F_{q,j}.
\]
The strict gap at $q$ makes these two subspaces independent of the choice of
singular bases inside the upper and lower blocks.  Since the relevant
Grassmannians are compact and $\hat y_j\to\hat y$, after passing to a
subsequence we may assume
\[
 F_{q,j}\to F_q \subset E_{\hat y}\;\text{ and }\; S_{q,j}\to S_q\subset E_{\hat p}.
\]
At a formal endpoint we set
\[
 F_0:=\{0\}\; \text{ and }\; F_d:=E_{\hat y},
\]
whenever that endpoint occurs.  Thus $F_r\subset F_s$ in all cases.  For the
slow spaces we use only genuine boundaries: when both are genuine one has
$S_s\subset S_r$.  Put
\begin{equation}\label{eq:J-boundaries}
 J:=\{q\in\{r,s\}:1\le q\le d-1\}.
\end{equation}

\section{Closing the bad blocks and the all-path obstruction}\label{sec:path-obstruction}

The goal of this section is to use the singular-value plateau from the previous section to obtain a
condition that every holonomy path closing the bad blocks must satisfy. If a
closing path failed to satisfy all the boundary relations coming from the
plateau, the comparison between the singular-value sums and the logarithms of the eigenvalue moduli would give periodic orbits whose
$k$-th Lyapunov gap is too small. This would contradict the assumed uniform
gap on periodic orbits. Therefore every closing path must satisfy at least one
of the limiting boundary relations. Moreover, we show that $J\neq \emptyset$. We start with an auxiliary lemma.

\begin{lemma}\label{lem:two-boundary}
Let $B_j,C_j\in\GL(d,\R)$ and $N_j\to\infty$.  Suppose that
\[
 \frac1{N_j}\log\sigma_i(B_j)\to a_i \; \text{ and }\; \log\norm{C_j^{\pm1}}=o(N_j) ,
\]
and assume that the numbers $a_i$ satisfy \eqref{eq:plateau}.
At every $q\in J$ let $u_{q,j}$ be a unit top left singular vector of
$\Lambda^qB_j$ and let $v_{q,j}^*$ be the corresponding unit top right
covector.  Assume that
\begin{equation}\label{eq:boundary-transversality}
 -\log\abs{v_{q,j}^*(\widetilde{\Lambda^q C_j u_{q,j}})}=o(N_j),
\end{equation}
where the tilde denotes unit normalization. Then, ordering eigenvalues by decreasing modulus,
\[
 \log\frac{|\rho_k(C_jB_j)|}{|\rho_{k+1}(C_jB_j)|}=o(N_j).
\]
\end{lemma}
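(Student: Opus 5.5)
The plan is to work one exterior power at a time. The idea is to show that, at each genuine boundary $q\in J$, the product of the $q$ largest eigenvalue moduli of $C_jB_j$ has the same exponential size as $\sigma_1(B_j)\cdots\sigma_q(B_j)$. The eigenvalue moduli inside the plateau are then squeezed by a majorization argument. Throughout, write $M_j:=C_jB_j$ and $\Sigma_q:=a_1+\cdots+a_q$.

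\emph{Step 1 (upper bounds).} By Weyl's majorization, $\sum_{i\le m}\log|\rho_i(M_j)|\le\sum_{i\le m}\log\sigma_i(M_j)$ for every $m$. Since $\log\|C_j^{\pm1}\|=o(N_j)$, each $\sigma_i(M_j)$ differs from $\sigma_i(B_j)$ by a factor $e^{o(N_j)}$. Hence $\sum_{i\le m}\log|\rho_i(M_j)|\le N_j\Sigma_m+o(N_j)$ for all $m$. Taking determinants gives equality at $m=d$, and $m=0$ is trivial. This handles the formal endpoints $r=0$ or $s=d$.

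\emph{Step 2 (lower bound at genuine boundaries).} This is the main step, and the one I expect to be the obstacle. Fix $q\in J$ and put $T=\Lambda^qB_j$ and $G=\Lambda^qC_j$. Then $T=\sigma_1(T)\,u_{q,j}v_{q,j}^*+E$ with $\|E\|\le\sigma_2(T)$, so
\[
 GT=\sigma_1(T)\,(Gu_{q,j})\,v_{q,j}^*+GE .
\]
Moreover $\sigma_2(T)/\sigma_1(T)=\sigma_{q+1}(B_j)/\sigma_q(B_j)=e^{-(a_q-a_{q+1})N_j+o(N_j)}$, while $\|G^{\pm1}\|=e^{o(N_j)}$. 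Let $w=\widetilde{Gu_{q,j}}$ and $\delta=|v_{q,j}^*(w)|=e^{-o(N_j)}$ by \eqref{eq:boundary-transversality}.

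I would avoid eigenvalue perturbation theory for the non-normal rank-one part and use an invariant cone instead. Let $\mathcal K=\{x:\|x-tw\|\le\epsilon|t|\ \text{for some } t\}$ with $\epsilon=\delta/4$. For $x\in\mathcal K$ we have $|v_{q,j}^*(x)|\ge(\delta/2)|t|$. The rank-one part sends $x$ exactly onto the line of $w$, with length at least $\sigma_1(T)\|Gu_{q,j}\|(\delta/2)|t|\ge\sigma_1(T)e^{-o(N_j)}|t|$. The error $GEx$ has size at most $\sigma_2(T)e^{o(N_j)}|t|$, which is exponentially smaller. Thus for large $j$ the cone $\mathcal K$ is mapped into itself and vectors in it are stretched by at least $\sigma_1(T)e^{-o(N_j)}$. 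Iterating then gives $\rho(GT)\ge\sigma_1(T)e^{-o(N_j)}$. Since $GT=\Lambda^qM_j$ and its spectral radius is $|\rho_1(M_j)\cdots\rho_q(M_j)|$, we get $\sum_{i\le q}\log|\rho_i(M_j)|\ge N_j\Sigma_q-o(N_j)$.

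\emph{Step 3 (squeezing the plateau).} Set $x_i=\frac1{N_j}\log|\rho_i(M_j)|$ and let $a$ be the common plateau value $a_{r+1}=\cdots=a_s$. Steps 1 and 2 at $m=r$ and $m=s$ give $\sum_{i=r+1}^{s}x_i=(s-r)a+o(1)$. Step 1 gives $\sum_{i=r+1}^{m}x_i\le(m-r)a+o(1)$ for $r<m\le s$. Since the $x_i$ are nonincreasing, we have:
\begin{itemize}
\item $x_{r+1}\ge a-o(1)$, since $x_{r+1}$ is at least the average of $x_{r+1},\dots,x_s$;
\item $x_{r+1}\le a+o(1)$, by the bound at $m=r+1$;
\item $x_s\ge a-o(1)$, by subtracting the bound at $m=s-1$ from the total.
\end{itemize}
Monotonicity then pins every $x_i$ with $r<i\le s$ to $a+o(1)$. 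Since $r<k<k+1\le s$, this yields $\log\bigl(|\rho_k(M_j)|/|\rho_{k+1}(M_j)|\bigr)=o(N_j)$.
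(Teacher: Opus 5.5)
Your proof is correct, and its overall structure matches the paper's. Your Step 1 is in substance the paper's bound $\rho(\Lambda^q(C_jB_j))\le\|\Lambda^qC_j\|\|\Lambda^qB_j\|$, together with the determinant at $q=d$. Your Step 3 unpacks the paper's concavity argument: $q\mapsto B_q$ is concave and lies below the affine $q\mapsto A_q$ on $[r,s]$, with equality at both ends. You carry $o(1)$ errors explicitly, whereas the paper passes to subsequences along which $N_j^{-1}\log|\rho_i(C_jB_j)|$ converge. One small point: your bound $\sum_{i=r+1}^m x_i\le(m-r)a+o(1)$ uses the Step 2 lower bound at $r$ as well as Step 1 at $m$, not Step 1 alone.

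The genuine difference is Step 2. The paper gets the lower bound on the spectral radius from the trace. From $\Lambda^q(C_jB_j)=s_{q,j}(L_{q,j}u_{q,j})\otimes v_{q,j}^*+L_{q,j}R_{q,j}$ it obtains $\operatorname{tr}\Lambda^q(C_jB_j)=s_{q,j}v_{q,j}^*(L_{q,j}u_{q,j})+O(\|L_{q,j}\|\|R_{q,j}\|)$. The transversality hypothesis makes the first term dominate, and $|\operatorname{tr}X|\le\binom dq\rho(X)$ on $\Lambda^q\R^d$ finishes. You instead build a thin double cone around $w=\widetilde{\Lambda^qC_ju_{q,j}}$ and check that it is mapped into itself and uniformly stretched. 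This requires tracking the witnesses $t$ and using $(1-\epsilon)|t|\le\|x\|\le(1+\epsilon)|t|$; you then conclude by Gelfand's formula.

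The trace route is shorter and purely algebraic. Yours avoids the finite-dimensional trace inequality and gives slightly more geometric information: an invariant cone around the line of $\Lambda^qC_ju_{q,j}$, not just a bound on a modulus. Both yield $\rho(\Lambda^q(C_jB_j))\ge s_{q,j}e^{-o(N_j)}$, which is all the lemma needs.
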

\begin{remark}
Geometrically, $u_{q,j}$ represents the left fast $q$-plane $F_{q,j}$ of $B_j$, while
$\ker v_{q,j}^*$ is the hyperplane section corresponding to $q$-planes that meet the
right slow $(d-q)$-plane $S_{q,j}$ of $B_j$.  Thus \eqref{eq:boundary-transversality} says that $C_j(F_{q,j})$ does not approach this hyperplane at an exponential rate. Equivalently, it remains subexponentially transverse to $S_{q,j}$.
\end{remark}

\begin{proof}
Let
\[
 A_q=a_1+\cdots+a_q.
\]
Note that the quantities $N_j^{-1}\log|\rho_i(C_jB_j)|$ are uniformly bounded.
Indeed, the convergence of the normalized singular values of $B_j$ gives
\[
 \log\norm{B_j}=O(N_j)\; \text{ and }\;
 \log\norm{B_j^{-1}}=O(N_j),
\]
while $\log\norm{C_j^{\pm1}}=o(N_j)$.  Hence
\[
 \log\norm{C_jB_j}=O(N_j)\; \text{ and }\;
 \log\norm{(C_jB_j)^{-1}}=O(N_j).
\]
Every eigenvalue $\rho$ of an invertible matrix $M$ satisfies
$\norm{M^{-1}}^{-1}\le |\rho|\le\norm M$ (apply the upper bound also to the
eigenvalue $\rho^{-1}$ of $M^{-1}$).  Therefore
\[
 \bigl|\log|\rho_i(C_jB_j)|\bigr|\le C N_j
\]
for a constant $C$ independent of $i$ and $j$.  After passing to a subsequence
we may therefore write
\[
 b_i=\lim_j\frac1{N_j}\log|\rho_i(C_jB_j)|\; \text{ and }\;
 B_q=b_1+\cdots+b_q.
\]
We will now observe that $A_q=B_q$. Indeed, for every $q$,
\begin{equation}\label{eq:BqAq}
 B_q\le A_q,
\end{equation}
since
\[
 \rho(\Lambda^q(C_jB_j))
 \le\norm{\Lambda^qC_j}\norm{\Lambda^qB_j}
\]
and $\log\norm{C_j^{\pm1}}=o(N_j)$.

On the other hand, at a genuine boundary $q$, let
\[
 B_j=U_j\operatorname{diag}(\sigma_1(B_j),\ldots,\sigma_d(B_j))V_j^*
\]
be a singular-value decomposition, with left singular vectors $u_{1,j},\ldots,u_{d,j}$ and right
singular vectors $v_{1,j},\ldots,v_{d,j}$.  Taking the $q$-th exterior power
wedges this decomposition: for every multi-index
$I=\{i_1<\cdots<i_q\}$, the vector
$v_{i_1,j}\wedge\cdots\wedge v_{i_q,j}$ is sent to
\[
 \bigl(\sigma_{i_1}(B_j)\cdots\sigma_{i_q}(B_j)\bigr)
 (u_{i_1,j}\wedge\cdots\wedge u_{i_q,j})
\]
by $\Lambda^qB_j$. Thus the largest singular value of $\Lambda^qB_j$ is the product corresponding
to $I=\{1,\ldots,q\}$, while the next largest is obtained by replacing
$\sigma_q(B_j)$ with $\sigma_{q+1}(B_j)$.  Writing the first rank-one singular
term separately gives
\[
 \Lambda^qB_j =  s_{q,j}u_{q,j}\otimes v_{q,j}^*+R_{q,j},
\]
where $u_{q,j}$ denotes the unit decomposable vector spanning
$u_{1,j}\wedge\cdots\wedge u_{q,j}$ which is the top left
singular vector of $\Lambda^qB_j $,  $v_{q,j}^*$ is the unit covector dual
to the right singular line $v_{1,j}\wedge\cdots\wedge v_{q,j}$ and $\otimes $ denotes the tensor product which is defined as follows: given subspaces $U$ and $V$, for $u \in U$ and $v^\ast \in V^*$,  $u \otimes v^\ast$ is the linear map from $V$ to $U$ defined by $(u \otimes v^\ast)(v) = v^\ast(v)u$ for every $v \in V$. Note that, for notational convenience, we use the same notation $u_{q,j}$ and $v_{q,j}$ for the top left and right singular vectors of $\Lambda^q B_j$ and for the $q$-th left and right singular vectors of $B_j$, respectively. The meaning will always be clear from the context. Consequently
\[
 s_{q,j}=\sigma_1(B_j)\cdots\sigma_q(B_j)\; \text{ and }\;
 \frac{\norm{R_{q,j}}}{s_{q,j}} = \frac{\sigma_{q+1}(B_j)}{\sigma_q(B_j)} = e^{-(a_q-a_{q+1})N_j+o(N_j)}
\]
and the Pl\"ucker line $[u_{q,j}]$ represents $F_{q,j}$ and
$\ker v_{q,j}^*$ is the hyperplane section of $q$-planes meeting
$S_{q,j}$.

Put $L_{q,j}=\Lambda^qC_j$.  Then
\begin{align*}
\operatorname{tr}(\Lambda^q(C_jB_j)) 
&= \operatorname{tr}\left( L_{q,j} \left( s_{q,j}u_{q,j}\otimes v_{q,j}^* + R_{q,j} \right) \right) \\
&= \operatorname{tr}\left( s_{q,j} (L_{q,j}u_{q,j})\otimes v_{q,j}^* + L_{q,j}R_{q,j} \right) \\
&= s_{q,j} \operatorname{tr}\left( (L_{q,j}u_{q,j})\otimes v_{q,j}^* \right) + \operatorname{tr}(L_{q,j}R_{q,j}) \\
&= s_{q,j} v_{q,j}^*(L_{q,j}u_{q,j}) + O(\norm{L_{q,j}}\norm{R_{q,j}}).
\end{align*}
Moreover, using \eqref{eq:boundary-transversality} (recall the normalization used in the referred equation) and
$\norm{L_{q,j}^{\pm1}}=e^{o(N_j)}$ we get that
\[
 |v_{q,j}^*(L_{q,j}u_{q,j})|
 \ge m(L_{q,j})\,e^{-o(N_j)}=e^{-o(N_j)}.
\]
Thus, the error term is exponentially smaller than this leading term. Indeed,
\[
 \frac{\norm{L_{q,j}}\norm{R_{q,j}}}
 {s_{q,j}|v_{q,j}^*(L_{q,j}u_{q,j})|}
 \le
 \bol(L_{q,j})e^{-(a_q-a_{q+1})N_j+o(N_j)}\longrightarrow0.
\]
Consequently there is no exponential cancellation and
\[
 |\operatorname{tr}(\Lambda^q(C_jB_j))|
 \ge s_{q,j}e^{-o(N_j)}.
\]
Since in fixed dimension $|\operatorname{tr}M|\le C\rho(M)$,
we obtain that $A_q\leq B_q$. Consequently, $B_q=A_q$ at every transverse genuine boundary.  Equality is
automatic at $q=0$. At $q=d$ it follows from
\[
 \det(C_jB_j)=\det(C_j)\det(B_j)
\]
and $\log|\det C_j|=o(N_j)$, which is a consequence of
$\log\|C_j^{\pm1}\|=o(N_j)$.

Now, the sequence $q\mapsto B_q$ is concave, whereas $q\mapsto A_q$ is affine on
$[r,s]$.  Together with \eqref{eq:BqAq} and equality at the two endpoints,
concavity gives
\[
 B_q=A_q \; \text{ for every }\; r\le q\le s.
\]
Hence
\[
 b_{r+1}=\cdots=b_s,
\]
and in particular $b_k=b_{k+1}$.  The argument applies to every subsequence
along which the normalized logarithms of the eigenvalue moduli converge.
Therefore every subsequential limit of
\[
 \frac1{N_j}\log\frac{|\rho_k(C_jB_j)|}{|\rho_{k+1}(C_jB_j)|}
\]
is zero, and the whole sequence converges to zero.  This is the claimed
$o(N_j)$ estimate.
\end{proof}

\begin{proposition}\label{prop:all-path}
For every holonomy path $G:E_{\hat y}\to E_{\hat p}$, there exists a genuine boundary
$q\in J$ such that
\begin{equation}\label{eq:all-path}
 S_q\cap GF_q\ne\{0\}.
\end{equation}
\end{proposition}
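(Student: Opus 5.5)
Suppose, for contradiction, that there is a holonomy path $G:E_{\hat y}\to E_{\hat p}$ with $S_q\cap GF_q=\{0\}$ for every $q\in J$. The plan is to close the bad blocks $D_j$ with perturbations of $G$, turn the resulting based loops into periodic orbits via Lemma~\ref{lem:periodize}, and then use Lemma~\ref{lem:two-boundary} to show that these periodic orbits have a $k$-th gap that is $o(\text{period})$. This contradicts \eqref{eq:periodic-gap}.

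First, since $\hat y_j\to\hat y$, Lemma~\ref{lem:path-perturb} gives holonomy paths $G_j:E_{\hat y_j}\to E_{\hat p}$ with the same orbit length $L_G$ and $G_j\to G$. Because $D_j$ has the form of Lemma~\ref{lem:exact-concat}, with identity terminal stable leg, the composition $\mathcal B_j:=G_jD_j$ is again a holonomy path. It is a based loop at $\hat p$ of orbit length $M_j=N_j+L_G\to\infty$, with initial point $\hat a_j\to\hat p$. If the unstable leg of $G_j$ moves the starting point, the initial point is the shifted point $\hat f^{-N_j}$ of $G_j$'s start, which still agrees with $\hat p$ on the past and on a block of length growing with $m_j$, so it still converges to $\hat p$. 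Lemma~\ref{lem:periodize} then produces periodic points $\hat q_j$ of period $M_j$ whose return matrix has the same eigenvalues as $R_jG_jD_j$, where $R_j\to\Id$. Since $R_jG_jD_j$ is similar to $D_jR_jG_j$, we apply Lemma~\ref{lem:two-boundary} with $B_j=D_j$ and $C_j=R_jG_j$. These maps $C_j$ converge to $G$, so $\log\|C_j^{\pm1}\|=O(1)=o(N_j)$. The $a_i$ are the plateau limits from \eqref{eq:plateau}, and $J$ is as in \eqref{eq:J-boundaries}.

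The key step is the transversality condition \eqref{eq:boundary-transversality}. For $q\in J$, the top left singular vector $u_{q,j}$ of $\Lambda^qD_j$ represents $F_{q,j}\to F_q$, and $\ker v^*_{q,j}$ is the hyperplane section $\cH(S_{q,j})$, which converges to $\cH(S_q)$; after a subsequence, $v^*_{q,j}\to v_q^*$ up to sign. Then $\Lambda^qC_j u_{q,j}$, once normalized, converges to the Plücker line of $GF_q$. The assumption $S_q\cap GF_q=\{0\}$ means this line lies outside $\cH(S_q)$, so $|v_q^*(\widetilde{\Lambda^qG\,u_q})|>0$. By continuity, $|v^*_{q,j}(\widetilde{\Lambda^qC_ju_{q,j}})|$ is bounded below by a positive constant, and \eqref{eq:boundary-transversality} holds in the strong form $O(1)$. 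I expect this to be the main technical point. One must check that the fast and slow planes at genuine boundaries are well defined and converge; this uses the strict gap $a_q>a_{q+1}$. One must also be careful with the identification of the fibers $E_{\hat y_j}$ and $E_{\hat y}$, which is harmless since all fibers are $\R^d$.

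Lemma~\ref{lem:two-boundary} then yields $\log(|\rho_k|/|\rho_{k+1}|)(D_jC_j)=o(N_j)$. Hence $\lambda_k(\hat q_j)-\lambda_{k+1}(\hat q_j)=\frac1{M_j}\,o(N_j)\to0$, because $M_j/N_j\to1$. This contradicts \eqref{eq:periodic-gap}.

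The same argument also handles the side claim that $J\neq\emptyset$. If $J$ were empty, then $r=0$ and $s=d$, so the hypothesis of Lemma~\ref{lem:two-boundary} is vacuous and the lemma applies to any closing path. Such a closing path exists because the shift is mixing, so the same contradiction follows. Therefore $J\ne\emptyset$, and every $G$ must satisfy \eqref{eq:all-path} for some $q\in J$.
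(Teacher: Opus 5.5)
Your proposal is correct and follows essentially the same route as the paper: you perturb $G$ to $G_j$ via Lemma~\ref{lem:path-perturb}, concatenate with $D_j$ via Lemma~\ref{lem:exact-concat}, periodize with Lemma~\ref{lem:periodize}, and apply Lemma~\ref{lem:two-boundary} to $B_j=D_j$ and $C_j=R_jG_j$, with transversality at each $q\in J$ coming from the limiting incidence $S_q\cap GF_q=\{0\}$. The detour through the similarity $R_jG_jD_j\sim D_jR_jG_j$ is harmless but unnecessary, since Lemma~\ref{lem:two-boundary} already concerns $C_jB_j=R_jG_jD_j$ directly.
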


\begin{proof} We use the notation from previous sections.
Fix $G$ and apply Lemma \ref{lem:path-perturb} to obtain paths
$G_j:E_{\hat y_j}\to E_{\hat p}$ with $G_j\to G$.  By
Lemma \ref{lem:exact-concat},
\[
 {\mathcal B}_j:=G_jD_j:E_{\hat p}\to E_{\hat p}
\]
is itself a based holonomy loop.  In the notation of
Lemma \ref{lem:exact-concat}, if
\[G_j=H^s_{\hat b_j,\hat p}\hat A^L(\hat c_j) H^u_{\hat y_j,\hat c_j}\]
then
\[\begin{split}
{\mathcal B}_j&=
H^s_{\hat b_j,\hat p} \hat A^L(\hat c_j) H^u_{\hat y_j,\hat c_j} \hat A^{N_j}(\hat a_j) H^u_{\hat p,\hat a_j}\\
&=H^s_{\hat b_j,\hat p} \hat A^{N_j+L}(\hat a'_j)
H^u_{\hat p,\hat a'_j}
\end{split}
\]
with $\hat a'_j=\hat f^{-N_j}\hat c_j$.
In particular, the concatenated path is represented by an orbit segment beginning at $\hat a'_j$. Now, since
$\hat c_j\in W^u_{\rm loc}(\hat y_j)$ and $\hat y_j=\hat f^{N_j}\hat a_j$, backward contraction shows that $d_\theta(\hat a'_j, \hat a_j)=d_\theta(\hat f^{-N_j}\hat c_j, \hat f^{-N_j} \hat y_j)\to 0$ and, consequently, as $\hat a_j\to \hat p$, $\hat a'_j$ converges to $\hat p$. Moreover, as $\hat a_j\in W^u_{\rm loc}(\hat p)$ we have that $\hat a'_j\in W^u_{\rm loc}(\hat p)$.
We may therefore apply Lemma \ref{lem:periodize} which gives periodic points $\hat q_j$ of periods $M_j=N_j+o(N_j)$ and
corrections $R_j\to\Id$ such that the periodic return $\hat A^{M_j}(\hat q_j)$ has the
same eigenvalues as
\[
 R_j{\mathcal B}_j=(R_jG_j)D_j.
\]
Put $C_j=R_jG_j$.  Then $C_j\to G$ and
$\log\norm{C_j^{\pm1}}=o(N_j)$.  If the conclusion of \eqref{eq:all-path} failed, then for every
$q\in J$ the limiting incidence would be transverse.  Consequently the
Pl\"ucker transversality determinant of
$S_{q,j}$ and $C_jF_{q,j}$ would be bounded below by a positive constant for
all large $j$.  Lemma \ref{lem:two-boundary} applied to $B_j=D_j$ and this
$C_j$ would therefore give
\[
 \log\frac{|\rho_k(\hat A^{M_j}(\hat q_j))|}
               {|\rho_{k+1}(\hat A^{M_j}(\hat q_j))|}=o(N_j)=o(M_j),
\]
contradicting \eqref{eq:periodic-gap}.
\end{proof}

\begin{corollary}\label{cor:J-nonempty}
Recall the set $J$ given in \eqref{eq:J-boundaries}. Then
    \[J\neq \emptyset.\]
\end{corollary}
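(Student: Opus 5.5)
Suppose $J=\emptyset$. By \eqref{eq:J-boundaries} this means neither $r$ nor $s$ is a genuine boundary, so $r=0$ and $s=d$. The maximal plateau containing $k$ is then all of $\{1,\dots,d\}$, i.e.\ $a_1=\cdots=a_d$. We will contradict Proposition~\ref{prop:all-path}: with $J=\emptyset$ its conclusion, the existence of some $q\in J$ with $S_q\cap GF_q\neq\{0\}$, can never hold. It therefore suffices to show there is at least one holonomy path $G:E_{\hat y}\to E_{\hat p}$. Such paths exist by the mixing assumption (Remark~\ref{rem:mixing-reduction}). Take a point $\hat c\in W^u_{\rm loc}(\hat y)$ whose future, after a bridge of bounded length $L$, follows the future of $\hat p$. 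Then $\hat f^L\hat c\in W^s_{\rm loc}(\hat p)$, and $G=H^s_{\hat f^L\hat c,\hat p}\hat A^L(\hat c)H^u_{\hat y,\hat c}$ is a holonomy path. Applying Proposition~\ref{prop:all-path} to this $G$ gives a contradiction.

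The statement does not hinge on Proposition~\ref{prop:all-path}'s formal quantifier, so it is worth checking the argument directly. When $J=\emptyset$, the transversality hypothesis \eqref{eq:boundary-transversality} of Lemma~\ref{lem:two-boundary} is vacuous. Its proof then only uses $B_q\le A_q$, equality at $q=0$ and at $q=d$ (the determinant identity), and concavity of $q\mapsto B_q$ against affineness of $q\mapsto A_q$ on $[0,d]$. These give $b_1=\cdots=b_d$, hence $b_k=b_{k+1}$.

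In concrete terms, the closing construction in the proof of Proposition~\ref{prop:all-path} proceeds as follows. Perturb $G$ to paths $G_j$ via Lemma~\ref{lem:path-perturb}. Concatenate with Lemma~\ref{lem:exact-concat}. Periodize with Lemma~\ref{lem:periodize} to get periodic points $\hat q_j$ of period $M_j=N_j+o(N_j)$. The return maps $\hat A^{M_j}(\hat q_j)$ have the same spectrum as $C_jD_j$ with $\log\norm{C_j^{\pm1}}=o(N_j)$. Lemma~\ref{lem:two-boundary} then yields
\[
\log\frac{|\rho_k(\hat A^{M_j}(\hat q_j))|}{|\rho_{k+1}(\hat A^{M_j}(\hat q_j))|}=o(M_j),
\]
which contradicts \eqref{eq:periodic-gap}, since that hypothesis requires this quantity to be at least $cM_j$.

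The main point to check is that the periodic points $\hat q_j$ really are periodic points of the current (reduced) system, so that \eqref{eq:periodic-gap} applies to them. Proposition~\ref{prop:linear-reduction}(i) guarantees that the periodic-gap hypothesis survives the reduction. The only remaining ingredient is the existence of one bridge from $\hat y$ to $\hat p$, and this is immediate in a mixing subshift of finite type.
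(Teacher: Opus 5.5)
Your argument is correct and is essentially the paper's proof. The paper likewise reruns the closing construction of Proposition~\ref{prop:all-path}, applies Lemma~\ref{lem:two-boundary} with \eqref{eq:boundary-transversality} vacuous to get $b_k=b_{k+1}$, and so contradicts \eqref{eq:periodic-gap}. Your black-box reading of Proposition~\ref{prop:all-path} repackages the same argument: with $J=\emptyset$ its existential conclusion is false, so exhibiting one holonomy path suffices. Your explicit bridge path from $\hat y$ to $\hat p$ supplies the one input the paper leaves implicit.
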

\begin{proof}
    Proceeding as in the proof of Proposition \ref{prop:all-path}, if $J=\emptyset$ then we could apply Lemma \ref{lem:two-boundary}, because condition \eqref{eq:boundary-transversality} is vacuous, and obtain periodic points contradicting \eqref{eq:periodic-gap}.
\end{proof}
From now on, we assume that at least one of $r$ or $s$ is a genuine boundary.  In
particular, the dimension $d$ of our fiber satisfies $d\ge3$. Indeed, for $d=2$ the only plateau containing $k=1$ and $k+1=2$ is the full plateau just excluded.

\section{From the obstruction to a finite invariant arrangement}\label{sec:arrangement} 

In this section we use the relation obtained in Proposition \ref{prop:all-path} to obtain a finite family of
subspaces that moves continuously with the base point. After passing to the
one-sided model, we use a $u$-state and a maximal-section argument to build
this family in an exterior power. At the fixed periodic point, one member of
the family lies in the kernel of the limiting covector coming from the bad
blocks.

\subsection{Partial flag bundle and the set $\mathscr K$}
Let $\hat y\in \widehat\Sigma$ be the point given in Section \ref{sec:bad-blocks} and consider the (partial) flag space on the fiber $E_{\hat y}$  given by
\[
 \cF_{r,s}(E_{\hat y})
 =\{(V_r,V_s)\in
\Gr(r,E_{\hat y})\times\Gr(s,E_{\hat y}):V_r\subset V_s\}.
\]
Here $r$ and $s$ are exactly the two endpoints of the maximal singular-value
plateau introduced in \eqref{eq:plateau}.  For every genuine boundary
$q\in J$, Section~\ref{sec:bad-blocks} defined $F_q\subset E_{\hat y}$ as the
limit of the left fast $q$-planes $F_{q,j}$ of the bad maps $D_j$. At a formal endpoint we set $F_0=\{0\}$ or $F_d=E_{\hat y}$.  Thus the two endpoint spaces
$F_r$ and $F_s$ satisfy $F_r\subset F_s$.  When $r=0$ we interpret
$V_r=\{0\}$, and when $s=d$ we interpret $V_s=\mathbb R^d$. In particular, these formal endpoints add no data to the flag.

Take
\[
 \xi=(F_r,F_s)\in\cF_{r,s}(E_{\hat y}).
\]
Let $\mathscr K$ be the closure of the set obtained from $\xi$ by finitely many applications of the forward $\hat A$-skew product and local unstable holonomies. Equivalently, $\mathscr K$ is the smallest compact subset of the partial-flag bundle over $\widehat\Sigma$ which contains $\xi$, is forward invariant under the $\hat A$-skew product, and is saturated by local unstable holonomies. For
$\hat x\in\widehat\Sigma$ we write
\[
 \mathscr K_{\hat x}
 :=\{\zeta:(\hat x,\zeta)\in\mathscr K\}
\]
for the fiber of $\mathscr K$ over $\hat x$.

We next put the part of $\mathscr K$ lying over the local stable set of
$\hat p$ into the single fiber $E_{\hat p}$.  This is useful because the stable
reduction to the one-sided model transports precisely such flags to the
preferred fiber over $\hat p$, while Proposition~\ref{prop:all-path} gives an
obstruction for flags obtained by holonomy paths ending at $\hat p$.  Define
\begin{equation*}\label{eq:Khat}
 \hat{\mathscr K}_{\hat p}
 :=\overline{\{H^s_{\hat z,\hat p}\zeta:
       \hat z\in W^s_{\rm loc}(\hat p),\ \zeta\in\mathscr K_{\hat z}\}},
\end{equation*}
where stable holonomy acts componentwise on a flag
\[
 H^s_{\hat z,\hat p}(V_r,V_s)
 =(H^s_{\hat z,\hat p}V_r,H^s_{\hat z,\hat p}V_s).
\]
We claim that
\begin{equation}\label{eq:path-support}
 \hat{\mathscr K}_{\hat p}
 \subset
 \overline{\{G\xi:G:E_{\hat y}\to E_{\hat p}\text{ is a holonomy path}\}}.
\end{equation}
Indeed, before taking the closure in the definition of $\mathscr K$, every
point is obtained from $\xi$ by finitely many forward orbit pieces and unstable
holonomies.  Using
\[
 H^u_{\hat f\hat x,\hat f\hat y}\hat A(\hat x)
 =\hat A(\hat y)H^u_{\hat x,\hat y},
\]
all unstable holonomies can be moved to the beginning of the orbit segment.
The resulting word has the form
$\hat A^n(\hat x')H^u_{\hat y,\hat x'}$.  If its terminal point
$\hat z$ belongs to $W^s_{\rm loc}(\hat p)$, appending
$H^s_{\hat z,\hat p}$ gives a holonomy path from $\hat y$ to $\hat p$.
For a point in the closure, take terminal points
$\hat z_j\to\hat z\in W^s_{\rm loc}(\hat p)$ and put
$\hat w_j=[\hat z_j,\hat p]$.  Appending first
$H^u_{\hat z_j,\hat w_j}$ and then $H^s_{\hat w_j,\hat p}$, and moving the
unstable holonomy backward by equivariance, again gives a genuine holonomy
path.  Since $\hat w_j\to\hat z$ and
$H^u_{\hat z_j,\hat w_j}\to\Id$, this proves \eqref{eq:path-support}.

\subsection{$u$-states on $\mathscr K$}\label{sec:u-states on K}
In what follows, we will need an ergodic $u$-state supported on $\mathscr K$. We will construct such a measure in Lemma \ref{lem:ustate-in-K} but first we record the standard fact that passing to ergodic components does not destroy the $u$-state property in the present symbolic setting.

\begin{lemma}\label{lem:ergodic-components-ustate}
Let $\hat F$ be a continuous bundle skew product over a two-sided subshift of finite type
$\hat f:\widehat\Sigma\to\widehat\Sigma$, with continuous equivariant local
unstable holonomies.  Let $\hat\mu$ be an ergodic $\hat f$-invariant
probability measure with local product structure.  If $\hat m$ is an
$\hat F$-invariant $u$-state projecting to $\hat\mu$, then almost every
ergodic component of $\hat m$ also projects to $\hat\mu$ and is a $u$-state.
Moreover, if $\mathscr C$ is a Borel set and
$\hat m(\mathscr C)=1$, then almost every such ergodic component gives full
mass to $\mathscr C$.
\end{lemma}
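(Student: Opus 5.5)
We prove Lemma~\ref{lem:ergodic-components-ustate} by combining the ergodic decomposition of $\hat m$ with two observations. Ergodicity of $\hat\mu$ pins down the base projection of each component. The $u$-state property is inherited because unstable-holonomy invariance is encoded by an invariant $\sigma$-algebra: the invariant $\sigma$-algebra of $\hat F$ can be taken, modulo $\hat m$-null sets, to be generated by functions that are constant along unstable-holonomy orbits.

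\textbf{Step 1: base projection and the set $\mathscr C$.} Let $\hat m=\int \hat m_\omega\,d\kappa(\omega)$ be the ergodic decomposition. Then $\int\hat\pi_*\hat m_\omega\,d\kappa=\hat\mu$. Each $\hat\pi_*\hat m_\omega$ is $\hat f$-invariant and ergodic, being a factor of an ergodic measure. Since $\hat\mu$ is ergodic, it is an extreme point of the simplex of invariant measures, so $\hat\pi_*\hat m_\omega=\hat\mu$ for $\kappa$-a.e.\ $\omega$. The statement about $\mathscr C$ is immediate: $1=\hat m(\mathscr C)=\int\hat m_\omega(\mathscr C)\,d\kappa$ forces $\hat m_\omega(\mathscr C)=1$ almost surely.

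\textbf{Step 2: ergodic components are unions of unstable holonomy classes.} Fix a continuous function $\phi$ on the bundle. By Birkhoff's theorem, the backward averages $\phi^-=\lim_n\frac1n\sum_{i<n}\phi\circ\hat F^{-i}$ exist $\hat m$-a.e., and they agree a.e.\ with the forward averages and with $E[\phi\mid\mathcal I]$, where $\mathcal I$ is the invariant $\sigma$-algebra. For $\hat y\in W^u_{\rm loc}(\hat x)$, set $\Phi(\hat x,\zeta)=(\hat y,H^u_{\hat x,\hat y}\zeta)$. By equivariance,
\[
\hat F^{-i}\Phi(\hat x,\zeta)=(\hat f^{-i}\hat y,H^u_{\hat f^{-i}\hat x,\hat f^{-i}\hat y}\hat F^{-i}\text{-fiber point}).
\]
Here the base points approach each other exponentially and $H^u\to\Id$. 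Uniform continuity of $\phi$ then gives $\phi^-\circ\Phi=\phi^-$ wherever both sides are defined. So $\phi^-$ is invariant under unstable holonomies off a null set.

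The main technical obstacle is that $\Phi$ must map a full-measure set to a full-measure set. I would handle this with the $u$-state property together with the local product structure of $\hat\mu$. In local product coordinates on a cylinder $[0;a]$, holonomy along unstable leaves transports $\hat m$, up to the positive continuous density $\psi$, from the conditional over one $\mu^s$-point to that over another. Hence $\Phi$ preserves null sets: by Fubini, a.e.\ unstable plaque is fully good. Consequently the components $\hat m_\omega$, which are the conditionals of $\hat m$ on $\mathcal I$, are determined by the values of the $\phi^-$ for a countable dense family of $\phi$. These values are holonomy invariant.

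\textbf{Step 3: conclude the $u$-state property of the components.} Write each component through its fiber disintegration $\hat m_\omega=\int(\hat m_\omega)_{\hat x}\,d\hat\mu(\hat x)$. Step 2 shows that the partition of the bundle into ergodic components is saturated by unstable holonomy. Concretely, $(\hat m_\omega)_{\hat x}$ is, up to normalization, the restriction of $\hat m_{\hat x}$ to the level set $\{\zeta:\omega(\hat x,\zeta)=\omega\}$. This description uses Step 1: since $\hat\pi_*\hat m_\omega=\hat\mu$ for a.e.\ $\omega$, the fiber conditionals of $\hat m$ decompose compatibly. Because $(H^u_{\hat x,\hat y})_*\hat m_{\hat x}=\hat m_{\hat y}$ and $H^u$ maps level sets to level sets, we get $(H^u_{\hat x,\hat y})_*(\hat m_\omega)_{\hat x}=(\hat m_\omega)_{\hat y}$ for a.e.\ unstable pair, via Fubini over the countable generating family. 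This is the required $u$-state property for almost every component.
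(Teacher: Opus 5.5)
Your proposal is correct and is essentially the paper's argument: ergodic decomposition, extremality of the ergodic measure $\hat\mu$ for the base projection, and uniqueness of disintegration to pass holonomy invariance to the components. It usefully makes explicit, via backward Birkhoff averages, the point the paper's sketch leaves to the cited exercise: the ergodic-component map is constant along unstable holonomies, and without this, uniqueness of disintegration alone would not give componentwise invariance.

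There is one small correction in Step~3. When the decomposition is non-atomic, $(\hat m_\omega)_{\hat x}$ is not a normalized restriction of $\hat m_{\hat x}$ to a level set, since those level sets are typically null. It is instead the conditional of $\hat m_{\hat x}$ along $\zeta\mapsto\omega(\hat x,\zeta)$, whose law is $\kappa$ by Step~1, and it is uniqueness of this disintegration that transports under $H^u_{\hat x,\hat y}$.
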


\begin{proof}
    The proof is standard and follows directly from the ergodic decomposition theorem, where the ergodicity of $\hat \mu$ forces almost every ergodic component to project to $\hat \mu$, and the uniqueness of fiber disintegration ensures that the holonomy invariance of the conditional measures is inherited componentwise. See \cite[Exercise 5.23]{VianaLectures}.
\end{proof}

\begin{lemma}
\label{lem:ustate-in-K}
Suppose we are in the setting of Theorem \ref{thm:main} and, moreover, that the subshift of finite type $\widehat\Sigma$ is mixing. Let $\hat\mu$ be a fully supported one-step ergodic Markov measure on $\widehat \Sigma$.
Then there exists an ergodic $u$-state
$\hat m$ on the partial-flag bundle which projects to $\hat\mu$ and
satisfies $\hat m(\mathscr K)=1$.
\end{lemma}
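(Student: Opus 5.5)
We construct the $u$-state by the standard Krylov--Bogolyubov procedure, pushing forward measures that are already holonomy-invariant along unstable sets, and we use the invariance properties of $\mathscr K$ to keep all the mass on $\mathscr K$.

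\textbf{Initial measure.} Since $\hat\mu$ is a fully supported one-step Markov measure, it has continuous local product structure. Pick the point $\hat y$ and the flag $\xi\in\mathscr K_{\hat y}$. For each symbol $a$ we transport $\xi$ to the symbol class of $a$. Choose a point $\hat y^{(a)}$ reached from $\hat y$ by a bridge of bounded length $n_a$; this is possible by mixing, even with a common length $n$ for all $a$. Then $\zeta_a:=\hat A^{n}(\cdot)\xi$, after an unstable holonomy, lies in $\mathscr K$ over a point of $W^u_{\rm loc}$ of the appropriate cylinder. On each cylinder $[0;a]$, regard $\hat x$ as $(P^s\hat x,P\hat x)$ and fix one reference point $\hat w_{x}$ on each local unstable set $W^u_{\rm loc}$. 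A concrete choice is the point with future $x$, obtained via the bracket with an orbit point carrying $\zeta_a$. Define the section
\[
\hat x\mapsto H^u_{\hat w,\hat x}\zeta(\hat w),
\]
which is constant along unstable holonomy by construction. Set $\hat m_0=\int\delta_{(\hat x,\text{section}(\hat x))}\,d\hat\mu$. The key point is that $\hat{\mathscr K}$ is saturated by local unstable holonomies, so it suffices that the reference flags lie in $\mathscr K$. This holds because they are obtained from $\xi$ by forward iteration and unstable holonomy, with measurability ensured by the continuity of holonomies.

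\textbf{Averaging.} Let $\hat m_n=\frac1n\sum_{i<n}\hat F^i_*\hat m_0$. Each $\hat F^i_*\hat m_0$ projects to $\hat\mu$ and has conditionals that are unstable-holonomy invariant. This follows from the equivariance $H^u\hat A=\hat A H^u$ together with the fact that $\hat f$ maps local unstable sets onto unions of local unstable sets, combined with local product structure. In terms of the one-sided picture, we use the stable reduction and the one-sided transfer formula \eqref{eq:one-sided-transfer-prelim}. It gives conditionals $m^{(i)}_x$ whose continuity modulus is uniform in $i$, thanks to the Hölder regularity \eqref{eq:leafwise-package} and the contraction on the past. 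Forward invariance of $\mathscr K$ gives $\hat m_n(\mathscr K)=1$.

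\textbf{Passing to the limit.} Take a weak-$*$ limit $\hat m$. It is $\hat F$-invariant, projects to $\hat\mu$, and satisfies $\hat m(\mathscr K)=1$, since $\mathscr K$ is closed. The main obstacle is showing that the $u$-state property survives the weak-$*$ limit. We handle this as in Avila--Viana: the set of $u$-states projecting to $\hat\mu$ is weak-$*$ closed. This follows from the equicontinuity of the reduced disintegrations $x\mapsto m_x$ noted in \cite[Proposition~4.4]{AvilaViana2007}, or equivalently from the characterization of $u$-states as measures whose conditionals on unstable plaques are given by a continuous holonomy-invariant family, which is stable under limits by Arzelà--Ascoli on $\Sigma^s$.

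\textbf{Ergodicity.} Finally, we apply Lemma~\ref{lem:ergodic-components-ustate} with $\mathscr C=\mathscr K$. Since $\hat\mu$ is ergodic, almost every ergodic component of $\hat m$ is an ergodic $u$-state projecting to $\hat\mu$ and giving full mass to $\mathscr K$, which proves the lemma.
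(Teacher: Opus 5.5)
There is a genuine gap in your construction of the initial measure $\hat m_0$, at the claim that the reference flags lie in $\mathscr K$.

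Your section $\hat x\mapsto H^u_{\hat w,\hat x}\zeta(\hat w)$ lands in $\mathscr K$ only if, on $\hat\mu$-almost every local unstable set $W^u_{\rm loc}(x^-)$, there is a reference point $\hat w$ carrying a flag $\zeta(\hat w)\in\mathscr K_{\hat w}$. Your reference flags are finitely many flags $\zeta_a$ over finitely many points $\hat y^{(a)}$, and neither bracket moves them to a general local unstable set:
\begin{itemize}
\item The point $[\hat x,\hat y^{(a)}]$ lies on $W^s_{\rm loc}(\hat y^{(a)})$. Transporting $\zeta_a$ there needs a \emph{stable} holonomy, and $\mathscr K$ is not saturated by stable holonomies.
\item The point $[\hat y^{(a)},\hat x]$ stays on $W^u_{\rm loc}(\hat y^{(a)})$, so it never reaches the unstable set of $\hat x$.
\end{itemize}

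More fundamentally, finitely many forward iterations and unstable holonomies starting from $(\hat y,\xi)$ only reach points whose past is the past of $\hat y$ followed by a finite word. These form countably many local unstable sets. For a fully supported Markov measure on a nontrivial mixing subshift, $\mu^s$ is non-atomic, so these sets have $\hat\mu$-measure zero. Hence your claim that the reference flags ``are obtained from $\xi$ by forward iteration and unstable holonomy'' fails for almost every unstable set, and $\hat m_0(\mathscr K)=1$ is not established. Your later steps correctly propagate full mass on $\mathscr K$ through the Ces\`aro averages and the weak-$*$ limit, but they have nothing to propagate.

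The missing ingredient is the closure in the definition of $\mathscr K$, which is what the paper uses.
\begin{itemize}
\item Since $\mathscr K$ is compact, its base projection $\operatorname{pr}(\mathscr K)$ is closed, forward invariant and unstable-saturated.
\item It contains $\bigcup_{n\ge0}\hat f^nW^u_{\rm loc}(\hat y)$, which is dense by mixing. Therefore $\mathscr K_{\hat x}\neq\varnothing$ for \emph{every} $\hat x$.
\item Fix a Borel choice $r(x^-)$ of a point with past $x^-$. Select $\zeta(x^-)\in\mathscr K_{r(x^-)}$ by the Kuratowski--Ryll-Nardzewski theorem, using that $\hat x\mapsto\mathscr K_{\hat x}$ has closed graph and nonempty compact values.
\item Put $s(\hat x)=H^u_{r(x^-),\hat x}\zeta(x^-)$.
\end{itemize}
Note that measurability here comes from the selection theorem, not from continuity of holonomies. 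Points of $\mathscr K$ added by the closure have no explicit formula. With this initial measure, your averaging, limit and ergodic-decomposition steps go through as in the paper. Closedness of the $u$-state property under weak-$*$ limits is the same Avila--Viana / Backes--Brown--Butler fact the paper cites.
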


\begin{proof}
We first show that every base point in $\widehat\Sigma$ has at least one flag in $\mathscr K$.  Let
$\operatorname{pr}(\mathscr K)\subset\widehat\Sigma$ be the projection of
$\mathscr K$ to the base.  This set is compact, forward invariant, locally
unstable saturated, and contains $\hat y$.  If
$C\subset\widehat\Sigma$ is a cylinder, mixing allows us to choose
$\hat y'\in W^u_{\rm loc}(\hat y)$ and $n\ge0$ with
$\hat f^n\hat y'\in C$.  Hence
\[
 \bigcup_{n\ge0}\hat f^nW^u_{\rm loc}(\hat y)
\]
is dense in $\widehat\Sigma$.  Since this dense set is contained in
$\operatorname{pr}(\mathscr K)$ and the latter is closed, we have
$\operatorname{pr}(\mathscr K)=\widehat\Sigma$.  Thus
$\mathscr K_{\hat x}\ne\varnothing$ for every $\hat x\in \widehat\Sigma$.

We now construct explicitly a probability measure whose conditional measures are
invariant under unstable holonomy.  A local unstable set consists of all
points having the same left-infinite past.  For each admissible past $x^-$,
choose one two-sided sequence $r(x^-)$ having that past.  We may make this choice
Borel, for instance by fixing, for each possible zero-coordinate, one admissible
right-infinite continuation.  Since the compact fiber
$\mathscr K_{r(x^-)}$ is nonempty, choose a flag
\[
 \zeta(x^-)\in\mathscr K_{r(x^-)}
\]
measurably in $x^-$. This can be done because $\mathscr K$ is compact, so the fiber relation
\[ \hat x\longmapsto\mathscr K_{\hat x} \]
has a closed graph and nonempty compact values. After composition with the
Borel map $x^-\mapsto r(x^-)$, we obtain the set-valued map
\[ \Phi(x^-):=\mathscr K_{r(x^-)}.\]
The Kuratowski--Ryll-Nardzewski measurable selection theorem then provides
a Borel selector
\[ \zeta:x^-\longmapsto\zeta(x^-)\in\Phi(x^-)=\mathscr K_{r(x^-)}.\]

For an arbitrary $\hat x\in\widehat\Sigma$, put $x^-=P^s\hat x$ and define
\begin{equation*}\label{eq:section-K-u}
 s(\hat x)
 :=H^u_{r(x^-),\hat x}\,\zeta(x^-).
\end{equation*}
Note that the two points $r(x^-)$ and $\hat x$ lie on the same local unstable set.
Because $\mathscr K$ is unstable-holonomy saturated,
$s(\hat x)\in\mathscr K_{\hat x}$.  Moreover, if
$\hat x$ and $\hat y$ lie on the same local unstable set, then they have the
same past $x^-$, and the composition rule for holonomies gives
\[
 s(\hat y)=H^u_{\hat x,\hat y}s(\hat x).
\]
Thus the Dirac masses $\delta_{s(\hat x)}$ are exactly transported into one
another by unstable holonomy.

Define $\hat m_0$ by putting the Dirac mass
$\delta_{s(\hat x)}$ in the flag fiber over $\hat x$ and then integrating over
$\hat\mu$. Equivalently, for every continuous function $\Phi$ on the
partial-flag bundle, define $\hat m_0$ by
\begin{equation*}\label{eq:m0-explicit}
 \int\Phi\,d\hat m_0
 =\int_{\widehat\Sigma}\Phi(\hat x,s(\hat x))\,d\hat\mu(\hat x).
\end{equation*}
It follows directly from the definition that $\hat m_0$ projects to
$\hat\mu$, is supported on $\mathscr K$, and has fiber conditionals
$\delta_{s(\hat x)}$, which are invariant under local unstable holonomies.
The measure $\hat m_0$ need not yet be invariant under the skew product.

Let $\hat F$ denote the partial-flag skew product.  Holonomy equivariance
implies that if a family of fiber conditionals is invariant under unstable
holonomy, then the same is true after applying $\hat F_*$.  Also,
$\hat F(\mathscr K)\subset\mathscr K$.  Therefore each Ces\`aro average
\[
 \hat m_N
 =\frac1N\sum_{j=0}^{N-1}\hat F_*^j\hat m_0
\]
projects to $\hat\mu$, gives full mass to $\mathscr K$, and has a
$u$-holonomy-invariant fiber disintegration, although the finite average need
not itself be invariant.  Since the
flag bundle is compact, the sequence $(\hat m_N)$ has weak-star
accumulation points.  Any accumulation point $\hat m$ is
$\hat F$-invariant by the usual Ces\`aro argument. Moreover, the unstable-holonomy relation also passes to the limit and $\hat m$ is a $u$-state (see the proof of \cite[Lemma 4.3]{BackesBrownButler}).

Finally, applying Lemma~\ref{lem:ergodic-components-ustate} to the invariant $u$-state $\hat m$ just constructed and to the set $\mathscr K$ we get that almost every ergodic component of $\hat m$ still projects to $\hat\mu$, is a $u$-state, and gives full mass to $\mathscr K$.  Choosing one of these components gives the ergodic $u$-state required in the lemma.
\end{proof}

Choose $\hat m$ as in Lemma~\ref{lem:ustate-in-K}, and choose in the
stable reduction of Subsection~\ref{subsec:stable-reduction} the reference past
for the symbol $p_0$ to be the past of the fixed point $\hat p$, so that
$\varphi^u(\hat p)=\hat p$.  Put $\mu=P_*\hat\mu$.

It is useful first to reduce the entire \emph{partial-flag} measure.  Define the
stable factor map
\begin{equation*}\label{eq:stable-factor-map}
 \mathcal Q_{r,s}(\hat z,(V_r,V_s))
 :=\bigl(P\hat z,
        (H^s_{\hat z,\varphi^u(\hat z)}V_r,
         H^s_{\hat z,\varphi^u(\hat z)}V_s)\bigr)
\end{equation*}
and put
\[
 \bar m:=(\mathcal Q_{r,s})_*\hat m.
\]
The map $\mathcal Q_{r,s}$ is the partial-flag version of the stable reduction
from Subsection~\ref{subsec:stable-reduction}, in particular, it intertwines the two-sided partial-flag skew product with the one-sided one.  The usual continuity argument
for a $u$-state applies without change to this compact partial-flag fiber, so we
write $\bar m_x$ for its weak-star continuous fiber disintegration.  For
reference, these conditionals are given by
\begin{equation*}\label{eq:flag-stable-reduction}
 \bar m_x
 =\int_{P^{-1}(x)}
 \bigl(H^s_{\hat z,\varphi^u(\hat z)}\bigr)_*
 \hat m_{\hat z}\,d\hat\mu_x^s(\hat z),
\end{equation*}
with holonomy acting componentwise on flags, at points where the original fiber
disintegration $\hat m_{\hat z}$ is defined. In what follows, we use the continuous version of this disintegration.

We now explain the reason for having introduced $\hat{\mathscr K}_{\hat p}$.  Since $\hat m(\mathscr K)=1$, the reduced measure $\bar m$ is
supported on the compact set $\mathcal Q_{r,s}(\mathscr K)$.  If
$P\hat z=p:=P\hat p$, then $\hat z\in W^s_{\rm loc}(\hat p)$ and, by our choice
of reference past, $\varphi^u(\hat z)=\hat p$.  Hence the fiber over $p$ of
$\mathcal Q_{r,s}(\mathscr K)$ is contained in
$\hat{\mathscr K}_{\hat p}$.  Because $\mu$ has full support and
$x\mapsto\bar m_x$ is continuous, the same support inclusion holds for the conditional at $p$. Indeed, if a point of $\supp\bar m_p$ lay outside the closed fiber of $\mathcal Q_{r,s}(\mathscr K)$ over $p$, one could choose a product neighborhood disjoint from $\mathcal Q_{r,s}(\mathscr K)$ on which $\bar m_p$ has positive fiber mass. By weak-star continuity of $x\mapsto\bar m_x$, the corresponding fiber neighborhood would have positive conditional mass for all $x$ in a sufficiently small neighborhood of $p$. Since $\mu$ has full support, integrating over that neighborhood would contradict $\bar m(\mathcal Q_{r,s}(\mathscr K))=1$. Therefore
\begin{equation}\label{eq:flag-support-Khat}
 \supp \bar m_p\subset\hat{\mathscr K}_{\hat p}.
\end{equation}
Thus $\bar m_p$ is a measure on the partial-flag fiber over $p$. In particular, it is not a Grassmannian measure.

For $q\in\{r,s\}$, let $\pi_q:\cF_{r,s}(E_{\hat p})\longrightarrow\Gr_q(E_{\hat p})$ given by 
\[
 \pi_q(V_r,V_s)=V_q,
\]
denote the natural projection onto the $q$-plane coordinate.  When
$q\in J$ is a genuine boundary, define the $q$-Grassmannian marginal of the
flag conditional at $p$ by
\begin{equation*}\label{eq:mpq-definition}
 m_p^{(q)}:=(\pi_q)_*\bar m_p.
\end{equation*}

By Corollary~\ref{cor:J-nonempty}, the set $J$ in
\eqref{eq:J-boundaries} is nonempty.  For $q\in J$, let $S_q$ be given in Section \ref{sec:bad-blocks} and define the section
\[
 \cH_q=\{V\in\Gr(q,E_{\hat p}):V\cap S_q\ne\{0\}\}
 =\iota_q^{-1}\bigl(\mathbb P(\ker\ell_q)\bigr)
\]
where $\iota_q:\Gr(q,E_{\hat p})\to \mathbb P(\Lambda^qE_{\hat p})$ is the Pl\"ucker embedding and $\ell_q\in(\Lambda^qE_{\hat p})^*$ is a suitable nonzero covector (recall Section \ref{sec: grassmanian}). Proposition~\ref{prop:all-path} says that every path image $G\xi$ belongs to at
least one of the sets $\pi_q^{-1}\cH_q$, $q\in J$.  This finite union is closed.
Therefore \eqref{eq:path-support} implies
\[
 \hat{\mathscr K}_{\hat p}
 \subset\bigcup_{q\in J}\pi_q^{-1}\cH_q
\]
which together with \eqref{eq:flag-support-Khat},  gives us that
\[
 \supp\bar m_p
 \subset\bigcup_{q\in J}\pi_q^{-1}\cH_q.
\]
Hence for at least one $q\in J$,
\begin{equation}\label{eq:positive-H}
 m_p^{(q)}(\cH_q)>0.
\end{equation}
Fix such a boundary $q$ for the rest of the proof.

We now pass from the flag measure to this chosen Grassmannian coordinate.  Put 
\begin{equation}\label{eq:q-marginal-two-sided}
 \hat m^{(q)}:=(\pi_q)_*\hat m
\end{equation}
where $\pi_q$ here denotes the projection defined on the  whole bundle which is analogous to the projection $\pi_q$ defined above. It will be clear from the context which one we are using so there is no risk of confusion.
The measure $\hat m^{(q)}$ is an ergodic $u$-state on the
$q$-Grassmannian bundle: the projection $\pi_q$ intertwines both the cocycle
action and unstable holonomy, so the $u$-state property passes to the
pushforward, and a factor of an ergodic measure is ergodic.

Let $m$ be the one-sided Grassmannian measure obtained by applying the stable
reduction to $\hat m^{(q)}$, and denote its fiber conditionals by $m_x$.
They have a weak-star continuous version as observed in Section \ref{subsec:stable-reduction}. Recall the stable factor map  $\mathcal Q_q$ on the $q$-Grassmannian bundle given in \eqref{eq:stable-factor-map-prelim}.  Since stable holonomy acts componentwise on flags,
$\mathcal Q_q\circ\pi_q=\pi_q\circ\mathcal Q_{r,s}$.  Thus stable reduction
commutes with $\pi_q$.  Consequently
\begin{equation}\label{eq:q-marginal-commutes-reduction}
 m_x=(\pi_q)_*\bar m_x
 \;\text{ for every }x\in\Sigma,
\end{equation}
and in particular
\begin{equation}\label{eq:mp-equals-mpq}
 m_p=m_p^{(q)}.
\end{equation}
This fixes the notation used below: $\bar m_x$ always denotes a partial-flag
conditional, whereas $m_x$ denotes the conditional of the selected
$q$-Grassmannian marginal.  Since the stable reduction is a factor of the
two-sided Grassmannian skew product (after the stable reduction), the ergodicity of $\hat m^{(q)}$ implies that $m$ is ergodic.

Normalize the Pl\"ucker covector $\ell_q$ defining $\cH_q$ to have norm one.
Recall that $S_{q,j}\to S_q$.  If $v_{q,j}^*$ denotes the unit top right
singular covector of $\Lambda^qD_j$, then
$\ker v_{q,j}^*$ defines, under the Pl\"ucker embedding, the hyperplane
section of $q$-planes meeting $S_{q,j}$. Therefore every accumulation point $v^*$ of the unit covectors $v_{q,j}^*$ defines the limiting hyperplane section
$\cH_q$, so $\ker v^*=\ker\ell_q$.  Since $v^*$ and $\ell_q$ have norm one,
$v^*=\pm\ell_q$.  After changing signs and passing to a subsequence, we may
therefore assume
\begin{equation}\label{eq:vqj converges to lq}
 v_{q,j}^*\longrightarrow\ell_q.
\end{equation}

\subsection{Maximal arrangement} Before we state our main auxiliary result of this section, we make one piece of notation
explicit.  If $W\subset\Lambda^q\mathbb R^d$ is a linear subspace, then
\[
 \mathbb PW
 :=\{[\omega]\in\mathbb P(\Lambda^q\mathbb R^d):
          0\ne\omega\in W\}
\]
is its projectivization.  We define the associated linear section of the
Grassmannian by
\begin{equation}\label{eq:XW-definition}
 X(W):=\iota_q^{-1}(\mathbb PW)
 =\{V\in\Gr(q,d):\iota_q(V)\in\mathbb PW\}.
\end{equation}
Thus $X(W)$ is a Borel subset of $\Gr(q,d)$, so a conditional probability
$m_x$ on the fiber $\{x\}\times\Gr(q,d)$ can be evaluated on it without any
change of ambient space.  If we identify the Grassmannian with its Pl\"ucker
image, then $X(W)$ is exactly the shorthand
$\mathbb PW\cap\Gr(q,d)$.

We now prove the maximal-section construction needed in the argument.  The
broader strategy of extracting finite invariant projective data from
holonomy-invariant conditional measures goes back to the pioneering work of Bonatti--Viana
\cite{BonattiViana2004} and was subsequently used in several different works like \cite{AvilaViana2007, BackesBrownButler, DeWittMitsutani}. 

\begin{lemma}\label{lem:maximal-sections}
Let $f:\Sigma\to\Sigma$ be a mixing one-sided subshift of finite type and let $\mu$ be a
fully supported one-step Markov measure.  Let $A:\Sigma\to\GL(d,\mathbb R)$ be
continuous and let $m$ be an ergodic invariant probability for the skew product
\[
 (x,V)\longmapsto (fx,A(x)V)
 \; \text{ on }\;\Sigma\times\Gr(q,d),
\]
projecting to $\mu$.  Assume that $m$ admits a weak-star continuous
disintegration $x\mapsto m_x$.  We identify the fiber
$\{x\}\times\Gr(q,d)$ with $\Gr(q,d)$, so each $m_x$ is a Borel probability
measure on $\Gr(q,d)$.  Embed $\Gr(q,d)$ in
$\mathbb P(\Lambda^q\mathbb R^d)$ by the Pl\"ucker embedding.

If some $m_x$ gives positive mass to a \emph{proper} linear section, then there exist 
integers $d_0, N_0\ge1$, a number $\gamma_0>0$, and, for every
$x\in \Sigma$, exactly ${N_0}$ subspaces
\[
 W_1(x),\ldots,W_{N_0}(x)\subset\Lambda^q\mathbb R^d \; \text{ with }\; \dim W_i(x)=d_0,
\]
exhibiting the following properties:
\begin{enumerate}[label=\textup{(\roman*)}]
\item If $X_i(x):=X(W_i(x))=\iota_q^{-1}(\mathbb P W_i(x))$, then
      $m_x(X_i(x))=\gamma_0$.
\item Distinct $X_i(x)$ have zero $m_x$-mass intersection.
\item The unordered family $\{W_1(x),\ldots,W_{N_0}(x)\}$ varies continuously
      with $x$ and is permuted by $\Lambda^qA(x)$ meaning that
\begin{equation}\label{eq:max-transfer}
 \Lambda^qA(x)\{W_1(x),\ldots,W_{N_0}(x)\}
 =\{W_1(fx),\ldots,W_{N_0}(fx)\}.
\end{equation}
\item The union has full conditional mass. That is,
\[
 m_x\Bigl(\bigcup_{i=1}^{N_0} X_i(x)\Bigr)=1\; \text{ and  }\; {N_0}\gamma_0=1.
\]
\item Every $W_i(x)$ is spanned by its nonzero decomposable vectors.
\end{enumerate}
\end{lemma}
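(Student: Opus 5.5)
The plan is the Bonatti--Viana / Avila--Viana maximal-section argument, carried out in the Pl\"ucker projective space with linear sections $X(W)$. For $x\in\Sigma$ and $1\le e\le\binom dq$, define
\[
 \gamma_e(x):=\max\{m_x(X(W)):W\subset\Lambda^q\R^d,\ \dim W=e,\ X(W)\ne\Gr(q,d)\}.
\]
The maximum is attained because $W\mapsto m_x(X(W))$ is upper semicontinuous on the compact Grassmannian of $e$-planes: $X(W)$ is compact and depends upper semicontinuously on $W$. Set $d_0:=$ the smallest $e$ such that $\sup_x\gamma_e(x)>0$, which exists by hypothesis. For each $x$ let $\gamma_0(x):=\gamma_{d_0}(x)$.

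\textbf{Step 1 (invariance).} By the inverse-branch formula \eqref{eq:one-sided-transfer-prelim}, $m_x=\sum_{z\in f^{-n}x}p_n(z\mid x)(A^n(z))_*m_z$. Since $\Lambda^qA^n(z)$ maps linear sections of dimension $d_0$ to linear sections of dimension $d_0$, we get $\gamma_0(x)\le\sum_z p_n(z\mid x)\gamma_0(z)$. Upper semicontinuity of $x\mapsto\gamma_0(x)$ follows from weak-star continuity of $m_x$ together with compactness. Combined with mixing and the positive weights, this inequality shows that $\gamma_0$ attains its maximum $\gamma_0^{\max}$ at a point, and then at all preimages. By density of preimages and upper semicontinuity, $\gamma_0$ is constant, say $\gamma_0>0$. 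Moreover, for every $z\in f^{-n}x$, the image of a maximal section at $z$ is maximal at $x$, and each maximal section at $x$ pulls back to maximal sections at every preimage.

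\textbf{Step 2 (finiteness and disjointness).} By minimality of $d_0$, if $W\ne W'$ are maximal then $m_x(X(W)\cap X(W'))=m_x(X(W\cap W'))=0$. This holds because $\dim(W\cap W')<d_0$, and lower-dimensional linear sections carry zero mass, using $\sup\gamma_e=0$ for $e<d_0$. Hence there are at most $1/\gamma_0$ maximal sections at each $x$, and (ii) holds. Replace each maximal $W$ by the span of its decomposable vectors. This does not increase the dimension, and it keeps the section $X(W)$ unchanged; minimality then forces equality, which gives (v).

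\textbf{Step 3 (constant number and continuity).} The number $N(x)$ of maximal sections satisfies $N(fx)\ge N(x)$ by the forward transfer of Step 1. The same semicontinuity-plus-inverse-branch argument shows that $N$ is constant, equal to $N_0$, and (iii) follows, including \eqref{eq:max-transfer}. Continuity of the unordered family comes from upper semicontinuity of the maximal set together with constancy of the count: limits of maximal sections are maximal, so each family converges to a family of the same cardinality.

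\textbf{Step 4 (full mass, the main obstacle).} The mass $g(x):=m_x(\bigcup_iX_i(x))=N_0\gamma_0$ is constant. Its complement $m'_x:=m_x$ restricted to the complement of the union defines an invariant measure, since the family is permuted. If this measure were nonzero, its normalization would be invariant. By ergodicity of $m$, however, the set $\bigcup_x\{x\}\times\bigcup_iX_i(x)$ is invariant and has positive measure, so it has measure one. Thus $N_0\gamma_0=1$, which is (iv).

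I expect the delicate point to be Step 3: continuity of the unordered family and constancy of $N$. This requires careful use of the weak-star continuity of $x\mapsto m_x$, in particular that maximal sections cannot disappear in the limit. I would handle it by upper semicontinuity of the mass of closed sections together with the equality case of the averaging inequality at every preimage.
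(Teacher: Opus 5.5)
Your architecture is the paper's: minimal dimension $d_0$, maximal mass $\gamma_0$, finiteness from zero-mass intersections, inverse-branch averaging, density of backward orbits, ergodicity for (iv), and the decomposable span for (v). Your preliminary proof that the pointwise maximum $\gamma_0(x)$ is constant is a harmless reordering and is correct. However, Step 3 rests on a transfer claim that does not follow from the averaging, and its inequality points the wrong way.

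\textbf{The transfer direction.} Take $W$ maximal at $z\in f^{-n}(x)$ and evaluate the inverse-branch formula at $x$ on $X(\Lambda^qA^n(z)W)$. This gives only $m_x(X(\Lambda^qA^n(z)W))\ge p_n(z\mid x)\gamma_0$. The other preimages $z'$ contribute $m_{z'}(X(\Lambda^qA^n(z')^{-1}\Lambda^qA^n(z)W))$, about which nothing is known at this stage; the forward statement is true a posteriori, but it is (iii) itself. The equality case of the weighted average forces only the pullback direction: if $W$ is maximal at $x$, then $(\Lambda^qA^n(z))^{-1}W$ is maximal at every $z\in f^{-n}(x)$. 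So what you actually have is $N(z)\ge N(fz)$, not $N(fx)\ge N(x)$. With your inequality, the density argument cannot give constancy, because it propagates only along forward orbits, which need not be dense.

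The repair is the paper's. Pick $x_*$ with $N(x_*)=N_0$ maximal, which is possible since $N\le 1/\gamma_0$. By injectivity of pullback, every $w\in f^{-n}(x_*)$ has $N(w)\ge N_0$, hence exactly $N_0$, and $\mathcal M(w)$ consists precisely of the pullbacks. This gives (iii) on the dense backward orbit of $x_*$. Once $N\equiv N_0$ everywhere, (iii) holds at every $x$: $(\Lambda^qA(x))^{-1}\mathcal M(fx)\subset\mathcal M(x)$ and both sets have $N_0$ elements.

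\textbf{Non-coalescence.} Passing from the dense set to every point, and proving continuity of the family, requires that limits of distinct maximal planes stay distinct. ``Limits of maximal sections are maximal'' does not give this. A priori two distinct maximal planes at $x_j$ could converge to the same $W$, and then you get neither $N(x)\ge N_0$ nor Hausdorff convergence $\mathcal M(x_j)\to\mathcal M(x)$. The missing argument uses your Step 2. The two sections have zero-mass intersection, so their union has $m_{x_j}$-mass $2\gamma_0$ and eventually lies in any neighborhood of $X(W)$. Weak-star continuity and Portmanteau then give $m_x(X(W))\ge 2\gamma_0$, contradicting the definition of $\gamma_0$.

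With these two fixes your Steps 3 and 4 coincide with the paper's proof. The exact invariance $F_A^{-1}\mathscr L=\mathscr L$ used in Step 4 follows from (iii) at every point.
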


\begin{proof}
We start recalling some basic facts described in the end of Section \ref{subsec:stable-reduction}.  If $z\in f^{-n}(x)$, let $p_n(z\mid x)$ be the conditional probability, with respect to $\mu$, of the inverse branch ending at $z$ given the future $x$.  Since $\mu$ is a fully supported one-step Markov measure,
\[
 p_n(z\mid x)>0\; \text{ and }\;
 \sum_{z\in f^{-n}(x)}p_n(z\mid x)=1,
\]
and these weights are locally constant in $x$ on the cylinders on which the
corresponding inverse branches are defined.  Invariance of $m$ and Rokhlin
disintegration give, for $\mu$-almost every $x$,
\begin{equation}\label{eq:one-sided-transfer}
 m_x=
 \sum_{z\in f^{-n}(x)}p_n(z\mid x)\,
       (A^n(z))_*m_z.
\end{equation}
Both sides depend continuously on $x$ on each inverse-branch cylinder. In particular, since
$\mu$ has full support, \eqref{eq:one-sided-transfer} holds for every $x\in \Sigma$.
Equivalently, for every Borel set $E\subset\Gr(q,d)$,
\begin{equation}\label{eq:one-sided-transfer-set}
 m_x(E)=
 \sum_{z\in f^{-n}(x)}p_n(z\mid x)\,
 m_z\bigl((A^n(z))^{-1}E\bigr).
\end{equation}

For a linear subspace $W\subset\Lambda^q\mathbb R^d$, let $X(W)$ be the
Grassmannian section defined in \eqref{eq:XW-definition}.  Let $d_0$ be the least dimension for which
$m_x(X(W))>0$ for some $x$ and some $W$ of that dimension.  Such a dimension
exists and under the hypothesis of the lemma it is strictly smaller than
$\dim\Lambda^q\mathbb R^d$, although this strict inequality will not be needed
until the application.

Among all $d_0$-dimensional sections define
\[
 \gamma_0:=\sup\{m_x(X(W)):x\in\Sigma \text{ and } \dim W=d_0\}.
\]
Note that the supremum is attained.  Indeed, take $x_n\to x$ and $W_n\to W$ in the compact
Grassmannian of $d_0$-planes, with the masses tending to $\gamma_0$. Given $\varepsilon>0$, let $N_\varepsilon(X(W))$ denote the open $\varepsilon$-neighborhood of $X(W)$ in
$\Gr(q,d_0)$ for any fixed compatible metric. Then, for all large $n$ we have that $X(W_n)\subset N_\varepsilon(X(W))$. Indeed, otherwise we could choose $V_n\in X(W_n)$ outside $N_\varepsilon(X(W))$. Thus, compactness of $\Gr(q,d)$ would give a subsequence $V_n\to V$, and continuity of the Pl\"ucker embedding together with $W_n\to W$ would imply $V\in X(W)$, a contradiction.
Consequently, weak-star continuity of $m_x$ and the
Portmanteau theorem give
\[
 \gamma_0\le m_x\left(\overline{N_\varepsilon(X(W))}\right).
\]
Letting $\varepsilon\downarrow0$ yields
$m_x(X(W))\ge\gamma_0$, and equality follows from the definition of $\gamma_0$.

Let us call a $d_0$-plane $W$ \emph{maximal at $x$} if
$m_x(X(W))=\gamma_0$, and denote the set of such planes by $\mathcal M(x)$.
If $W,W'\in\mathcal M(x)$ are distinct, then $\dim(W\cap W')<d_0$ and
\[
 X(W)\cap X(W')=X(W\cap W').
\]
By minimality of $d_0$ this intersection has zero $m_x$-mass.  Hence every
$\mathcal M(x)$ is finite and
\begin{equation*}\label{eq:max-number-bound}
 \#\mathcal M(x)\le \gamma_0^{-1}.
\end{equation*}

Now, let us choose $x_*\in\Sigma$ at which a maximal section exists, and then choose $x_*$ so that the number
\[
 {N_0}:=\#\mathcal M(x_*)
\]
is maximal among all fibers.  Write
$\mathcal M(x_*)=\{W_1^*,\ldots,W_{N_0}^*\}$.  Fix $n\ge1$ and an arbitrary
$n$-preimage $w\in f^{-n}(x_*)$. Apply
\eqref{eq:one-sided-transfer-set} with $E=X(W_i^*)$.  Since
$m_{x_*}(X(W_i^*))=\gamma_0$ and every term
\[
 m_w\!\left(X\bigl((\Lambda^qA^n(w))^{-1}W_i^*\bigr)\right)
\]
is at most $\gamma_0$ by definition of $\gamma_0$, while all the weights are
strictly positive and sum to one, equality of the weighted average with
$\gamma_0$ forces every term to equal $\gamma_0$.  Thus, for every
$w\in f^{-n}(x_*)$ and every $i$,
\begin{equation}\label{eq:max-pullback}
 (\Lambda^qA^n(w))^{-1}W_i^*\in\mathcal M(w).
\end{equation}
Moreover, the ${N_0}$ pulled-back planes are distinct because $A^n(w)$ is invertible.
Maximality of ${N_0}$ therefore implies that they are \emph{all} the maximal
planes at $w$.  In particular
\begin{equation}\label{eq:max-transfer-dense}
 \Lambda^qA(w)\,\mathcal M(w)=\mathcal M(fw)
\end{equation}
whenever $w$ belongs to the full backward orbit
$\bigcup_{n\ge0}f^{-n}(x_*)$. For a mixing one-sided subshift of finite type this backward orbit is dense.

Now, given $x\in \Sigma$, let $x_j\to x$ with $x_j$ in the backward orbit of $x_*$.  After passing to a subsequence, label the ${N_0}$ maximal planes at $x_j$ so
that
\[
 W_i(x_j)\longrightarrow W_i \; \text{ for }\; i=1,\ldots,{N_0}.
\]
Thus, as observed above in the argument that the supremum defining $\gamma_0$ is attained, for each fixed $\varepsilon>0$ we have that $X(W_i(x_j)) \subset N_\varepsilon(X(W_i))$ for every large $j$. Thus, weak-star continuity of $m_x$ and Portmanteau,
followed by $\varepsilon\downarrow0$, gives us that 
$m_x(X(W_i))\ge\gamma_0$, hence equality. Moreover, the limits $W_i$ are distinct.  Indeed, if
$W_i=W_k=W$ for $i\ne k$, then, since $X(W_i(x_j))$ and $X(W_k(x_j))$ have zero-mass intersection, their union have mass $2\gamma_0$. Moreover, the union is eventually contained in every fixed neighborhood of $X(W)$, so passage to the limit would give $m_x(X(W))\ge2\gamma_0$, contradicting maximality of $\gamma_0$.
Consequently $x$ has at least ${N_0}$ maximal sections, and by maximality of
${N_0}$ it has exactly ${N_0}$.

The same argument shows continuity of the unordered finite family. Indeed, if
$x_j\to x$, every subsequential limit of the ${N_0}$-tuple of maximal planes
consists of ${N_0}$ distinct maximal planes at $x$, hence is exactly
$\mathcal M(x)$.  In particular, the limiting unordered ${N_0}$-tuple is
unique: two different subsequential limits would both have to equal the same
finite set $\mathcal M(x)$.  Equivalently, $\mathcal M(x_j)\to\mathcal M(x)$
in the Hausdorff metric on finite subsets of
$\Gr(d_0,\Lambda^q\R^d)$.  Since \eqref{eq:max-transfer-dense} holds on a
dense set and both sides vary continuously, it extends to every $x$, proving
\eqref{eq:max-transfer}.

Let
\[
 \mathscr L=
 \{(x,V):V\in\textstyle\bigcup_{W\in\mathcal M(x)}X(W)\}.
\]
Continuity of the finite arrangement makes $\mathscr L$ Borel. Moreover,
\eqref{eq:max-transfer} gives exact invariance, not merely forward invariance.
Indeed, for every inverse branch $z$ of $fx$, the inverse image under
$\Lambda^qA(z)$ of a maximal plane at $fz$ is, by the same weighted-average
argument used in \eqref{eq:max-pullback}, a maximal plane at $z$. Thus
$F_A^{-1}(\mathscr L)=\mathscr L$, where $F_A$ denotes the Grassmannian skew
product. Its conditional mass is the constant ${N_0}\gamma_0>0$ at every $x$,
because
distinct maximal sections have zero-mass intersections.  Hence
$m(\mathscr L)={N_0}\gamma_0>0$.  Ergodicity of $m$ and invariance of
$\mathscr L$ give $m(\mathscr L)=1$, so ${N_0}\gamma_0=1$.  Thus the union has
full $m_x$-mass for every $x$.

Finally fix $W=W_i(x)$ and let $W^{\rm dec}$ be the linear span in $W$ of all
nonzero decomposable vectors contained in $W$.  The Grassmannian section depends
only on those decomposable vectors, so $X(W^{\rm dec})=X(W)$ and has positive
mass.  Minimality of $d_0$ gives $\dim W^{\rm dec}\ge d_0$.  Since
$W^{\rm dec}\subset W$ and $\dim W=d_0$, equality holds.  This proves the final
claim.
\end{proof}

We now go back to the setting of Section \ref{sec:u-states on K}.
The measure $m$ and its continuous disintegration $x\mapsto m_x$ now satisfy
all hypotheses of Lemma~\ref{lem:maximal-sections}.  The transfer formula
\eqref{eq:one-sided-transfer} is precisely the inverse-branch formula for these
conditionals. Moreover, by \eqref{eq:positive-H} and \eqref{eq:mp-equals-mpq}, the conditional $m_p$ gives positive mass to the proper linear section
\[
 \cH_q=\iota_q^{-1}\bigl(\mathbb P(\ker\ell_q)\bigr).
\]
Thus the nontriviality hypothesis of Lemma~\ref{lem:maximal-sections} is
satisfied.  Applying the lemma, we obtain a continuous finite arrangement
\[
 W_1(x),\ldots,W_{N_0}(x)\subset\Lambda^q\mathbb R^d\;
 \text{ and  }\;
 X_i(x)=X(W_i(x))=\iota_q^{-1}(\mathbb PW_i(x)),
\]
with
\begin{equation*}\label{eq:fullmassarr}
 m_x\Bigl(\bigcup_iX_i(x)\Bigr)=1\; \text{ and }\; {N_0}\gamma_0=1,
\end{equation*}
and, moreover,
\begin{equation}\label{eq:generated-decomp}
 W_i(x)=\Span\{\omega:\ 0\ne\omega\in W_i(x)\text{ is decomposable}\}.
\end{equation}

Equation~\eqref{eq:positive-H} and the full-mass property imply that for some
component at $p$, say $W_{i_0}(p)$,
\[
 m_p\bigl(X_{i_0}(p)\cap\cH_q\bigr)>0.
\]
Finally, 
\begin{equation}\label{eq:WinH}
 0<W_{i_0}(p)\subset\ker\ell_q.
\end{equation}
Indeed, if $W_{i_0}(p)$ were not contained in $\ker\ell_q$, then
$W_{i_0}(p)\cap\ker\ell_q$ would have dimension strictly smaller than $d_0$ and
its Grassmannian section would have positive $m_p$-mass, contradicting the
definition of $d_0$.  

\section{Quantitative regularity of the maximal arrangement}\label{sec:quantitative-arrangement}

In the previous section we have built a continuous finite family of subspaces. Here we need more control on how this family changes from one point to another. We show that it is exactly invariant under stable holonomies and that, along unstable leaves, it changes at a controlled H\"older rate. This estimate will be used in the next section to obtain exact unstable invariance.

Before stating the proposition, let us make explicit how the one-sided
arrangement is lifted back to the two-sided shift.  Recall from
Subsection~\ref{subsec:stable-reduction} that, for each
$\hat x\in\widehat\Sigma$, the point $\varphi^u(\hat x)$ has the chosen
reference past corresponding to its zero-coordinate and the same future as
$\hat x$, so that
$P\varphi^u(\hat x)=P\hat x$.  Thus the one-sided component
$W_i(P\hat x)$ may be viewed in the fiber over $\varphi^u(\hat x)$.  We then
transport this subspace to the actual fiber over $\hat x$ by stable holonomy
and define
\begin{equation}\label{eq:lift-arrangement}
 \hat W_i(\hat x)
 =\Lambda^qH^s_{\varphi^u(\hat x),\hat x}\,W_i(P\hat x).
\end{equation}
In other words, the lift simply reverses the stable-holonomy identification
used in the reduction to the one-sided cocycle.  Since the components form an
unordered finite family, the labels $i$ are understood only up to permutation. The intertwining relation in the stable reduction and \eqref{eq:max-transfer} give the cocycle invariance
\begin{equation}\label{eq:lift-arrangement-invariance}
 \Lambda^q\hat A(\hat x)
 \{\hat W_i(\hat x)\}_{i=1}^{N_0}
 =\{\hat W_i(\hat f\hat x)\}_{i=1}^{N_0}.
\end{equation}

We start with an auxiliary result.

\begin{lemma}\label{lem:projective-separation}
For every $\rho>0$ there is $c(\rho)>0$ such that, for any two
$d_0$-planes $S,T\subset\Lambda^q\mathbb R^d$ and
$\Delta=d_{\Gr}(S,T)$, one can find a hyperplane $U<S$ for which
\begin{equation}\label{eq:principal-separation}
 \dist([v],\mathbb PT)\ge c(\rho)\Delta
\end{equation}
whenever $[v]\in\mathbb PS\setminus N_\rho(\mathbb PU)$.
\end{lemma}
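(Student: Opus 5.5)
The estimate will come from linear algebra via principal angles between $S$ and $T$. If $S=T$ there is nothing to prove, since $\Delta=0$ and any hyperplane $U<S$ works trivially. So assume $\Delta>0$.

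First, pick orthonormal principal bases. Choose orthonormal bases $s_1,\dots,s_{d_0}$ of $S$ and $t_1,\dots,t_{d_0}$ of $T$ realizing the principal angles $0\le\phi_1\le\cdots\le\phi_{d_0}\le\pi/2$. These satisfy $\langle s_i,t_j\rangle=\delta_{ij}\cos\phi_i$, and the projection $P_T$ sends $s_i$ to $\cos\phi_i\,t_i$. The metric $d_{\Gr}$ is defined through orthogonal projections, so it equals $\norm{P_S-P_T}=\sin\phi_{d_0}$; any other standard metric is comparable to $\sin\phi_{d_0}$ up to dimensional constants. Hence $\Delta\le C\sin\phi_{d_0}$.

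Second, define $U:=\Span\{s_1,\dots,s_{d_0-1}\}$, the hyperplane of $S$ orthogonal to the most tilted principal direction $s_{d_0}$. Take a unit $v=\sum_i c_is_i\in S$. Its projective distance to $\mathbb PU$ is comparable to $|c_{d_0}|$, since it is essentially $\arcsin|c_{d_0}|$. So $[v]\notin N_\rho(\mathbb PU)$ gives $|c_{d_0}|\ge c_1(\rho)>0$.

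Third, compute the distance to $T$. The component of $v$ orthogonal to $T$ has squared norm
\[
 \norm{v-P_Tv}^2=\sum_i c_i^2\sin^2\phi_i\ge c_{d_0}^2\sin^2\phi_{d_0}.
\]
This uses that the vectors $s_i-\cos\phi_i t_i$ are mutually orthogonal with norm $\sin\phi_i$, which follows from the biorthogonality relations. For a unit vector, the projective distance $\dist([v],\mathbb PT)$ is comparable to $\norm{v-P_Tv}$, that is, to the sine of the angle between $v$ and $T$. Combining the three steps gives $\dist([v],\mathbb PT)\ge c\,c_1(\rho)\sin\phi_{d_0}\ge c(\rho)\Delta$.

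The main obstacle is bookkeeping rather than substance. We must check that the principal-angle decomposition holds in the Euclidean structure of $\Lambda^q\R^d$ (it does, since that space is just a finite-dimensional inner product space). We must also check that all the metric comparisons — $d_{\Gr}$ versus $\sin\phi_{d_0}$, and projective distance versus the sine of the angle — hold with constants depending only on $\dim\Lambda^q\R^d$. The orthogonality computation in the third step is the only point that needs a line of verification.
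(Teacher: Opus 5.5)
Your proof is correct and is essentially the paper's argument: your hyperplane $U=\Span\{s_1,\dots,s_{d_0-1}\}$ coincides with the paper's $U=\ker\varphi$. The paper's unit normal $w=L^*e/\|L^*e\|$ is the top right singular vector of $L=P_{T^\perp}|_S$, i.e.\ your most tilted principal vector $s_{d_0}$. The only difference is how the final bound is obtained: you use the exact identity $\|P_{T^\perp}v\|^2=\sum_i c_i^2\sin^2\phi_i$, while the paper uses the cruder but sufficient inequality $\|P_{T^\perp}v\|\ge|\langle P_{T^\perp}v,e\rangle|=\|L\|\,|\langle v,w\rangle|$.
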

Observe that, since $S\neq T$, there are directions in $S$ that are noticeably separated from $T$. However, there can also be some directions in $S$ that are very close to $T$. The lemma says that the latter phenomenon can be localized inside a single hyperplane. More precisely, there is a hyperplane $U<S$ such that every projective direction in $S$ that stays a fixed distance from $\mathbb P U$ is quantitatively separated from $\mathbb P T$, with a lower bound proportional to the Grassmannian distance $d_{\mathrm{Gr}}(S,T)$.

\begin{proof}
 If $S=T$ then $\Delta =0$ and there is nothing to prove. So, suppose $S\neq T$ and consider the orthogonal projection onto $T^\perp$, restricted to $S$ given by
\[L:=P_{T^\perp}|_S:S\longrightarrow T^\perp.\]

For every unit vector $v\in S$,
\[
\|Lv\|=\|P_{T^\perp}v\|=\dist(v,T).
\]
Thus $\|L\|$ measures the largest angle between a direction in $S$ and the
subspace $T$. For the fixed Grassmannian metric $d_{\Gr}$, this quantity is
comparable to the Grassmannian distance between $S$ and $T$. Namely,
\[
\|L\|\asymp d_{\Gr}(S,T)=\Delta,
\]
with constants depending only on the ambient dimension and on the chosen
equivalent metrics. Choose a unit vector $e\in T^\perp$ such that $\|L^*e\|=\|L\|$ where $L^*:T^\perp \to S$ denotes the adjoint of $L$.
Equivalently, $e$ is a left singular vector corresponding to the largest
singular value of $L$. Define a linear functional $\varphi:S\to\mathbb R$ by
\[
\varphi(v):=\langle Lv,e\rangle=\langle P_{T^\perp}v,e\rangle.
\]
Since $\varphi(v)=\langle v,L^*e\rangle$, we have that
\[
\|\varphi\|=\|L^*e\|=\|L\| \asymp\Delta.
\]

Let
\[
U:=\ker\varphi.
\]
Thus, since $S\ne T$, we have $\varphi\ne0$, and hence $U$ is a hyperplane in
$S$. We now show that vectors in $S$ whose projective classes stay away from
$\mathbb PU$ must also stay quantitatively away from $T$. Let
\[w:=\frac{L^*e}{\|L^*e\|}\in S.\]
Then $w$ is a unit vector normal to $U$ inside $S$, and for every unit
$v\in S$,
\[|\varphi(v)|=\|\varphi\|\,|\langle v,w\rangle|.\]
If $\dist([v],\mathbb PU)\ge\rho$, then the angle between $v$ and the hyperplane $U$ is bounded away from zero.
Consequently, there exists $c_0(\rho)>0$, depending only on $\rho$, such that
$|\langle v,w\rangle|\ge c_0(\rho)$. Therefore
\[|\varphi(v)|\ge c_0(\rho)\|\varphi\|\ge c_1(\rho)\Delta.\]
On the other hand,
\[|\varphi(v)|=|\langle P_{T^\perp}v,e\rangle|\le\|P_{T^\perp}v\|=\dist(v,T).\]
Hence
\[\dist(v,T)\ge c_1(\rho)\Delta.\]
Consequently, since for unit vectors the distance to $T$ is uniformly comparable to the projective distance from $[v]$ to $\mathbb PT$, after changing the constant, we obtain
\[\dist([v],\mathbb PT)\ge c(\rho)\Delta,\]
which proves \eqref{eq:principal-separation}.
\end{proof}

We now state our main invariance result of this section.
\begin{proposition}\label{prop:theta-s-arrangement}
After lifting the one-sided maximal arrangement to $\widehat\Sigma$ as in \eqref{eq:lift-arrangement}, it is
exactly stable-holonomy invariant.  Moreover, after a local matching of its
finitely many components,
\begin{equation}\label{eq:ambient-holder}
 d_{\Gr}
 \bigl(\hat W_j(\hat y),
       \Lambda^qH^u_{\hat x,\hat y}\hat W_i(\hat x)\bigr)
 \le C d_\theta(\hat x,\hat y)^{\theta_s}
\end{equation}
for every local unstable pair $\hat x,\hat y$.
\end{proposition}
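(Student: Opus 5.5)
The first claim should follow directly from the definition \eqref{eq:lift-arrangement}. If $\hat z\in W^s_{\rm loc}(\hat x)$, then $\hat z$ and $\hat x$ have the same future and the same zero-coordinate, so $P\hat z=P\hat x$ and $\varphi^u(\hat z)=\varphi^u(\hat x)$. The composition law $H^s_{\hat x,\hat z}H^s_{\varphi^u(\hat x),\hat x}=H^s_{\varphi^u(\hat x),\hat z}$, applied to $\Lambda^q$, then gives $\hat W_i(\hat z)=\Lambda^qH^s_{\hat x,\hat z}\hat W_i(\hat x)$ for every $i$. This is exact stable-holonomy invariance, with no matching of labels needed.

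For \eqref{eq:ambient-holder}, take a local unstable pair $\hat x,\hat y$ with $d_\theta(\hat x,\hat y)=\theta^N$. They share the zero-coordinate and the past, so they use the same reference past. Hence $\varphi^u(\hat x)$ and $\varphi^u(\hat y)$ lie on a common local unstable set, and $(\varphi^u(\hat x),\hat x,\varphi^u(\hat y),\hat y)$ is a local product rectangle. Since $\varphi^u$ preserves futures, $d_\theta(\varphi^u(\hat x),\varphi^u(\hat y))\le C\theta^N$. The rectangle estimate \eqref{eq:rectangle-package}, passed to $\Lambda^q$ and combined with \eqref{eq:prelim-grass-lip} (holonomies are uniformly bounded), therefore reduces \eqref{eq:ambient-holder} to the same estimate between the reference points:
\[
 d_{\Gr}\bigl(W_j(P\hat y),\Lambda^qH^u_{\varphi^u(\hat x),\varphi^u(\hat y)}W_i(P\hat x)\bigr)\le C\theta^{N\theta_s}.
\]
This is a statement about the one-sided arrangement of Lemma~\ref{lem:maximal-sections}. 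Its error term is of exactly the size already lost in the rectangle, which is why the exponent is $\theta_s$ rather than $\alpha$.

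To prove this one-sided estimate I would work with the two-sided $u$-state $\hat m^{(q)}$ instead of continuity of $x\mapsto\mathcal M(x)$, because pushing a backward-contracted estimate forward by the cocycle costs a factor $\bol(\Lambda^q\hat A^n)$ that is not controlled. The intermediate claim is that for $\hat\mu$-a.e.\ $\hat x$ the two-sided conditional satisfies $\hat m_{\hat x}(X(\hat W_i(\hat x)))=\gamma_0$ for every $i$. The one-sided conditional $m_x$ is the average over $P^{-1}(x)$ of the conditionals $(H^s_{\hat z,\varphi^u(\hat z)})_*\hat m_{\hat z}$. By the stable invariance just proved, each term evaluated on $X(W_i(x))$ is the mass of $X(\hat W_i(\hat z))$ under $\hat m_{\hat z}$. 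I would then run the maximality argument of Lemma~\ref{lem:maximal-sections} on the two-sided level: define a two-sided $\hat\gamma_0$ and $\hat d_0$, show they agree with $\gamma_0$ and $d_0$, and conclude that every term equals $\gamma_0$.

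Granting this, $u$-invariance gives $\hat m_{\hat y}=(H^u_{\hat x,\hat y})_*\hat m_{\hat x}$. Set $S=\Lambda^qH^u_{\hat x,\hat y}\hat W_i(\hat x)$ and take $T=\hat W_j(\hat y)$ closest to it, with $\Delta=d_{\Gr}(S,T)$. Then $\hat m_{\hat y}$ gives mass $\gamma_0$ both to $X(S)$ and to the union of the $X(\hat W_k(\hat y))$. Lemma~\ref{lem:projective-separation} provides a hyperplane $U<S$ off whose $\rho$-neighbourhood every point of $X(S)$ stays at distance $\ge c(\rho)\Delta$ from every $\mathbb PT$.

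The step I expect to be the main obstacle is converting this geometric separation into the quantitative bound $\Delta\le C\theta^{N\theta_s}$. Minimality of $d_0$ forces almost all of the mass of $X(S)$ off $N_\rho(\mathbb PU)$ for $\rho$ small, uniformly in the base point; this uses continuity and compactness exactly as in the proof that $\gamma_0$ is attained. What is missing is a quantitative modulus controlling how much mass the conditionals can place in a $c(\rho)\Delta$-thin shell around the sections. I would obtain it by evaluating everything at the reference point $\hat p$-type fibers through \eqref{eq:one-sided-transfer}, where the weights are locally constant. Then I would use the rectangle error $C\theta^{N\theta_s}$ as the only source of discrepancy between the holonomy-transported masses, so that a separation $\Delta\gg\theta^{N\theta_s}$ would force a mass defect contradicting $N_0\gamma_0=1$. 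If this measure-quantitative route fails, the fallback is to extract the estimate from the explicit H\"older modulus of the one-sided cocycle $A$. One iterates backward $n\approx N$ steps along matched inverse branches and absorbs the bolicity growth using \eqref{eq:eta-bunching-consequence} with $\eta_0\le\theta_s$, which is exactly where strong bunching enters.
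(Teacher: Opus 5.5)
\paragraph{A gap in the H\"older estimate.} Your proof of exact stable invariance is correct. The reduction to reference points through the rectangle $(\varphi^u(\hat x),\hat x,\varphi^u(\hat y),\hat y)$ is also fine, though it only restates the problem. The H\"older estimate, which is the real content of the proposition, is not proved, and the routes you sketch for it do not work.

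\emph{Your intermediate claim.} Justifying it via $\hat\gamma_0=\gamma_0$ and $\hat d_0=d_0$ fails. The two-sided conditionals of a $u$-state are in general much more concentrated than their stable averages: in the typical Avila--Viana situation they are Dirac masses at the unstable direction while $m_x$ is non-atomic. So lower-dimensional sections can carry positive $\hat m_{\hat z}$-mass. The identity $\hat m_{\hat z}(X(\hat W_i(\hat z)))=\gamma_0$ itself can be rescued by a martingale argument along $\hat f^{-n}$ plus Portmanteau for closed sets, but that does not help.

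\emph{Nonconcentration.} For the same reason, the nonconcentration you want for $\hat m_{\hat y}$ is unavailable. It comes from minimality of $d_0$ via continuity in the base point, so it holds only for weak-star continuous averages over stable fibers. A warning sign: if $\hat m_{\hat y}$ gave zero mass to sections of dimension $<d_0$, then exact $u$-invariance $\hat m_{\hat y}=(H^u_{\hat x,\hat y})_*\hat m_{\hat x}$ would immediately force $\Lambda^qH^u_{\hat x,\hat y}\hat W_i(\hat x)$ to be one of the $\hat W_k(\hat y)$. That is exact unstable invariance with no bunching at all. Relatedly, at the two-sided level there is no place for the error $d_\theta(\hat x,\hat y)^{\theta_s}$ to enter.

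\emph{The fallback.} It is circular. Pushing a closeness estimate at the inverse branches forward costs a factor $\bol(\Lambda^q\hat A^n)$, and at those branches you only know continuity of the arrangement, not a rate. Strong bunching is used only once an estimate of the present type is available (Section~\ref{sec:strong-rigidity}); this proposition needs only fiber bunching.

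\paragraph{The missing idea.} Measure the discrepancy in a transport metric, using the stable lifts
$\widetilde m_{\hat x}:=(H^s_{\varphi^u(\hat x),\hat x})_*m_{P\hat x}=\int(H^s_{\hat z,\hat x})_*\hat m^{(q)}_{\hat z}\,d\hat\mu^s_{P\hat x}(\hat z)$.
These are weak-star continuous, give mass $\gamma_0$ to each $X(\hat W_i(\hat x))$, and inherit uniform nonconcentration near lower-dimensional sections from minimality of $d_0$.

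\emph{Upper bound.} For a local unstable pair, $x_0=y_0$, so the one-step Markov property makes the conditional laws of the past over $P\hat x$ and $P\hat y$ equal. Swapping the future therefore couples $\hat\mu^s_{P\hat x}$ with $\hat\mu^s_{P\hat y}$ through pairs $(\hat z,\hat z')$ on common local unstable sets. The $u$-invariance of $\hat m^{(q)}$, together with \eqref{eq:rectangle-package} applied to $(\hat x,\hat z,\hat y,\hat z')$, then produces an explicit coupling showing
$W_1\bigl(\widetilde m_{\hat y},(H^u_{\hat x,\hat y})_*\widetilde m_{\hat x}\bigr)\le Cd_\theta(\hat x,\hat y)^{\theta_s}$.

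\emph{Lower bound.} The conversion to $\Delta$ is linear and needs no thin-shell modulus. The measure $(H^u_{\hat x,\hat y})_*\widetilde m_{\hat x}$ lives exactly on the transported arrangement. Nonconcentration, Lemma~\ref{lem:projective-separation} applied to $\hat W_j(\hat y)$ against each transported component, and uniform separation of components yield a subset of $X(\hat W_j(\hat y))$. This subset has $\widetilde m_{\hat y}$-mass at least $\gamma_0/2$ and lies at distance $\ge c\Delta$ from that arrangement. Hence every coupling costs at least $c\gamma_0\Delta/2$, and combining with the upper bound gives $\Delta\le Cd_\theta(\hat x,\hat y)^{\theta_s}$.
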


\begin{proof}
We start observing that the composition rule for
stable holonomies implies immediately that the lifted arrangement constructed in \eqref{eq:lift-arrangement} is exactly stable-holonomy invariant.  Its continuity follows from the continuity of the one-sided arrangement and of the stable holonomies.

In order to prove \eqref{eq:ambient-holder}, we start comparing the corresponding conditional measures. We retain all the notation from Section \ref{sec:arrangement}. So recall that $\hat m^{(q)}_{\hat z}$ denote a $u$-holonomy-invariant fiber disintegration of the selected two-sided Grassmannian $u$-state $\hat m^{(q)}$ from \eqref{eq:q-marginal-two-sided} and $m$ is the one-sided Grassmannian measure obtained by applying the stable reduction to $\hat m^{(q)}$ whose fiber conditionals are denoted by $m_x$.  By the stable-reduction
formula,
\[
 m_x=\int_{P^{-1}(x)}
 (H^s_{\hat z,\varphi^u(\hat z)})_*\hat m^{(q)}_{\hat z}
 \,d\hat\mu_x^s(\hat z).
\]
For $\hat x\in\widehat\Sigma$ define the stable lift of this one-sided
conditional by
\[
 \widetilde m_{\hat x}
 :=(H^s_{\varphi^u(\hat x),\hat x})_*m_{P\hat x}.
\]
Since $\varphi^u$ is constant on stable fibers, the composition rule of the holonomies gives the more useful formula
\begin{equation}\label{eq:lifted-conditional-formula}
 \widetilde m_{\hat x}
 =\int_{P^{-1}(P\hat x)}
 (H^s_{\hat z,\hat x})_*\hat m^{(q)}_{\hat z}
 \,d\hat\mu_{P\hat x}^s(\hat z).
\end{equation}
In particular, $\widetilde m_{\hat x}$ gives full mass to
$\bigcup_iX_i(\hat x)$, where
$X_i(\hat x):=\iota_q^{-1}(\mathbb P\hat W_i(\hat x))$.

We first establish the quantitative transport estimate
\begin{equation}\label{eq:W1}
 W_1\bigl(\widetilde m_{\hat y},
          (H^u_{\hat x,\hat y})_*\widetilde m_{\hat x}\bigr)
 \le C d_\theta(\hat x,\hat y)^{\theta_s}
\end{equation}
for every local unstable pair $\hat x,\hat y$. Put $x=P\hat x$ and
$y=P\hat y$. Since $\hat x$ and $\hat y$ have the same past, we have
$x_0=y_0$. For a one-step Markov measure, the conditional distribution of the
negative coordinates given the nonnegative coordinates depends only on the
symbol at time $0$. Hence the conditional distributions of the past over the
futures $x$ and $y$ coincide. Thus, we may identify $P^{-1}(x)$ and
$P^{-1}(y)$ by keeping the past fixed and replacing the future $x$ by $y$.
More precisely, if
\[
 \hat z=(\ldots,z_{-1},x_0,x_1,\ldots)\in P^{-1}(x),
\]
we set
\[
 \hat z'=(\ldots,z_{-1},y_0,y_1,\ldots)\in P^{-1}(y)
\]
and define $\Phi_{x,y}:P^{-1}(x)\to P^{-1}(y)$ by
$\Phi_{x,y}(\hat z)=\hat z'$. Then $\hat z$ and $\hat z'$ have the same past,
so $\hat z'\in W^u_{\rm loc}(\hat z)$, and
\[
 (\Phi_{x,y})_*\hat\mu_x^s=\hat\mu_y^s.
\]
Equivalently, the probability measure
\[
 \kappa_{x,y}:=(\Id,\Phi_{x,y})_*\hat\mu_x^s
\]
on $P^{-1}(x)\times P^{-1}(y)$ is a coupling of
$\hat\mu_x^s$ and $\hat\mu_y^s$, supported on pairs
$(\hat z,\hat z')$ with the same past.

For $\kappa_{x,y}$-almost every pair $(\hat z,\hat z')$, the $u$-state
property of $\hat m^{(q)}$ gives
\[
 (H^u_{\hat z,\hat z'})_*\hat m^{(q)}_{\hat z}
 =\hat m^{(q)}_{\hat z'}.
\]
Recall from \eqref{eq:lifted-conditional-formula} that the contribution indexed
by $\hat z$ to $\widetilde m_{\hat x}$ is obtained by transporting
$\hat m^{(q)}_{\hat z}$ from $E_{\hat z}$ to $E_{\hat x}$ by the stable
holonomy $H^s_{\hat z,\hat x}$. To send this contribution to the corresponding
one indexed by $\hat z'$ in $\widetilde m_{\hat y}$, we first undo this stable
transport, then move from $\hat z$ to $\hat z'$ by unstable holonomy, and
finally transport from $\hat z'$ to $\hat y$ by stable holonomy. Thus the
resulting fiber map is
\[
 T_{\hat z,\hat z'}
 =H^s_{\hat z',\hat y}
  H^u_{\hat z,\hat z'}
  H^s_{\hat x,\hat z}.
\]

With the ordering $(\hat x,\hat z,\hat y,\hat z')$, these four points form a
local product rectangle as defined in Section \ref{sec:holonomies}. Hence
\eqref{eq:rectangle-package} gives
\[
 \bigl\|H^u_{\hat z,\hat z'}H^s_{\hat x,\hat z}
       -H^s_{\hat y,\hat z'}H^u_{\hat x,\hat y}\bigr\|
 \le C d_\theta(\hat x,\hat y)^{\theta_s}.
\]
Multiplying on the left by $H^s_{\hat z',\hat y}$ and using the composition
rule for stable holonomies, we obtain
\begin{equation}\label{eq:T-Hu}
 \|T_{\hat z,\hat z'}-H^u_{\hat x,\hat y}\|
 \le C d_\theta(\hat x,\hat y)^{\theta_s},
\end{equation}
where the constant $C$ is independent of the coupled pair
$(\hat z,\hat z')$.

We now turn this pointwise estimate into the Wasserstein estimate
\eqref{eq:W1} by explicitly constructing a coupling. For each pair
$(\hat z,\hat z')$ in the support of $\kappa_{x,y}$, define
\[
 \Psi_{\hat z,\hat z'}:\Gr(q,E_{\hat z})
 \longrightarrow
 \Gr(q,E_{\hat y})\times\Gr(q,E_{\hat y})
\]
by
\[
 \Psi_{\hat z,\hat z'}(V_z)
 :=\left(
 H^s_{\hat z',\hat y}H^u_{\hat z,\hat z'}V_z,
 H^u_{\hat x,\hat y}H^s_{\hat z,\hat x}V_z
 \right).
\]
We define a probability measure on
$\Gr(q,E_{\hat y})\times\Gr(q,E_{\hat y})$ by
\begin{equation*}\label{eq:explicit-coupling}
 \Gamma_{\hat x,\hat y}
 :=\int
 (\Psi_{\hat z,\hat z'})_*\hat m^{(q)}_{\hat z}\,
 d\kappa_{x,y}(\hat z,\hat z').
\end{equation*}
We claim that $\Gamma_{\hat x,\hat y}$ is a coupling of
$\widetilde m_{\hat y}$ and
$(H^u_{\hat x,\hat y})_*\widetilde m_{\hat x}$.
Indeed, its second marginal is
\[
\begin{aligned}
 &\int
 \bigl(H^u_{\hat x,\hat y}H^s_{\hat z,\hat x}\bigr)_*
 \hat m^{(q)}_{\hat z}\,
 d\kappa_{x,y}(\hat z,\hat z') \\
 &\qquad =
 (H^u_{\hat x,\hat y})_*
 \int
 (H^s_{\hat z,\hat x})_*\hat m^{(q)}_{\hat z}\,
 d\hat\mu_x^s(\hat z)
 =
 (H^u_{\hat x,\hat y})_*\widetilde m_{\hat x},
\end{aligned}
\]
where we used the fact that the first marginal of $\kappa_{x,y}$ is
$\hat\mu_x^s$ and then \eqref{eq:lifted-conditional-formula}. For the
first marginal, the $u$-state identity and the fact that the second marginal
of $\kappa_{x,y}$ is $\hat\mu_y^s$ give
\[
\begin{aligned}
 &\int
 \bigl(H^s_{\hat z',\hat y}H^u_{\hat z,\hat z'}\bigr)_*
 \hat m^{(q)}_{\hat z}\,
 d\kappa_{x,y}(\hat z,\hat z') \\
 &\qquad =
 \int
 (H^s_{\hat z',\hat y})_*\hat m^{(q)}_{\hat z'}\,
 d\hat\mu_y^s(\hat z')
 =\widetilde m_{\hat y},
\end{aligned}
\]
again by \eqref{eq:lifted-conditional-formula}. Thus
$\Gamma_{\hat x,\hat y}$ has exactly the two required marginals.

To estimate the transportation cost of this coupling, set
\[
 V_x:=H^s_{\hat z,\hat x}V_z.
\]
Then
\[
 H^s_{\hat z',\hat y}H^u_{\hat z,\hat z'}V_z
 =T_{\hat z,\hat z'}V_x,
\]
so the pair produced by $\Psi_{\hat z,\hat z'}$ may be written as
\[
 \bigl(T_{\hat z,\hat z'}V_x,
       H^u_{\hat x,\hat y}V_x\bigr).
\]
Moreover, all the linear maps appearing in \eqref{eq:T-Hu}, together with
their inverses, range in a fixed compact subset of $\GL(d,\mathbb R)$. Their
induced actions on the Grassmannian are therefore uniformly Lipschitz. Thus
\eqref{eq:T-Hu} implies
\[
 d_{\Gr}\!\left(
 T_{\hat z,\hat z'}V_x,
 H^u_{\hat x,\hat y}V_x
 \right)
 \le C d_\theta(\hat x,\hat y)^{\theta_s},
\]
with the same constant for every coupled pair $(\hat z,\hat z')$ and every
$V_z$ in the corresponding Grassmannian. Hence the coupling
$\Gamma_{\hat x,\hat y}$ has average transportation cost bounded by the same
quantity. By the definition of the $1$-Wasserstein distance,
\[
\begin{aligned}
 W_1\bigl(\widetilde m_{\hat y},
          (H^u_{\hat x,\hat y})_*\widetilde m_{\hat x}\bigr)
 &\le
 \int d_{\Gr}(V,W)\,
 d\Gamma_{\hat x,\hat y}(V,W) \\
 &\le C d_\theta(\hat x,\hat y)^{\theta_s}.
\end{aligned}
\]
This proves \eqref{eq:W1} for every local unstable pair for which the chosen
$u$-state disintegration is invariant under unstable holonomies.

Finally, this holonomy invariance holds on a full-measure subset of the local
unstable relation. Because the Markov measure has full support and local
product structure, that subset is dense. The map
$\hat x\mapsto\widetilde m_{\hat x}$ is weak-star continuous and the unstable
holonomies depend continuously on their endpoints. Since the Grassmannian is
compact, the $1$-Wasserstein distance is continuous under weak-star
convergence. Approximating an arbitrary local unstable pair by pairs in the
full-measure set and passing to the limit therefore extends \eqref{eq:W1} to
every local unstable pair.

We next turn the measure estimate into an estimate for the maximal linear
sections. We begin by observing that the minimality of $d_0$ implies the following uniform nonconcentration
property: for every $\varepsilon>0$ there is $\rho(\varepsilon)>0$ such that,
for every $\hat x$, every $i$, and every proper linear subspace
$U<\hat W_i(\hat x)$,
\begin{equation}\label{eq:nonconc}
 \widetilde m_{\hat x}
 \bigl(X_i(\hat x)\cap\iota_q^{-1}(N_{\rho(\varepsilon)}(\mathbb PU))\bigr)
 <\varepsilon.
\end{equation}
Indeed, otherwise there would be $\varepsilon_0>0$, points $\hat x_n$,
components $i_n$, proper subspaces
$U_n<\hat W_{i_n}(\hat x_n)$, and radii $\rho_n\downarrow0$ for which the
left-hand side is at least $\varepsilon_0$. Then, passing to a subsequence, we may assume that the
dimensions of the $U_n$ are constant and
\[
 \hat W_{i_n}(\hat x_n)\to W \; \text{ and }\; U_n\to U<W.
\]
Thus, weak-star continuity and the Portmanteau theorem would then give positive mass to $X(U)$, contradicting the minimality of $d_0$.

In what follows, we will also use the fact that distinct maximal components $\hat W_i(\hat x)$ are  uniformly separated. Indeed, if no positive separation existed, there would be $\hat x_n$ and distinct indices $i_n\ne j_n$ with both maximal planes converging to the same $d_0$-plane $W$.
The corresponding sections have zero-mass intersection and each has mass
$\gamma_0$, so their union has mass $2\gamma_0$. Since this union is eventually
contained in every prescribed neighborhood of $X(W)$, weak-star continuity
would imply $\widetilde m_{\hat x}(X(W))\ge2\gamma_0$, contradicting the definition of
$\gamma_0$. Thus there is $\delta_*>0$ such that distinct components in the
same fiber are at least $\delta_*$ apart. The same remains true, with a smaller
constant, after applying a local holonomy sufficiently close to the identity.

For $\hat x,\hat y$ sufficiently close, continuity of the lifted arrangement,
continuity of the unstable holonomies, and the uniform separation established
above give a unique matching between the components
$\hat W_j(\hat y)$ and the transported components
$\Lambda^qH^u_{\hat x,\hat y}\hat W_i(\hat x)$. More precisely, after
shrinking the neighborhood if necessary, each component at $\hat y$ lies close
to exactly one transported component, while all the remaining transported
components stay a definite distance away. Fix one matched pair indexed by $j$ and $i$ and define
\[
 S=\hat W_j(\hat y),\quad
 T=\Lambda^qH^u_{\hat x,\hat y}\hat W_i(\hat x)\; \text{ and }\;
 \Delta=d_{\Gr}(S,T).
\]
We may assume $\Delta>0$, since otherwise the desired estimate is immediate.
Note that $\Delta$ is small when $\hat x$ and $\hat y$ are close. Moreover, there is a constant $\delta_1>0$, independent of the local pair, such that
\[
 d_{\Gr}\!\left(
 S,\Lambda^qH^u_{\hat x,\hat y}\hat W_h(\hat x)
 \right)
 \ge \delta_1
 \; \text{ for every }h\ne i.
\]
Indeed, this follows from the uniform separation of distinct components and
from the fact that a local unstable holonomy is close to the identity when its
endpoints are close.

Choose $\varepsilon=\frac{\gamma_0}{4{N_0}}$
and let $\rho=\rho(\varepsilon)>0$ be given by
\eqref{eq:nonconc}. For each transported component
\[
 T_h:=\Lambda^qH^u_{\hat x,\hat y}\hat W_h(\hat x)
 \; \text{ with } h=1,\ldots,{N_0},
\]
we have that  $S\neq T_h$ for every $h$ because $\Delta>0$ and we may apply \eqref{eq:principal-separation} to the pair $S$ and $T_h$. Thus, we obtain a
hyperplane $U_h<S$ such that every projective direction in $\mathbb P S$ that
stays at least $\rho$ away from $\mathbb P U_h$ is separated from
$\mathbb P T_h$ by at least
\[
 c(\rho)d_{\Gr}(S,T_h).
\]
For the matched component $h=i$, this lower bound is
$c(\rho)\Delta$. For every $h\ne i$, the preceding uniform separation gives
\[
 c(\rho)d_{\Gr}(S,T_h)\ge c(\rho)\delta_1.
\]
After shrinking the neighborhood once more, we may assume
$\Delta\le\delta_1$. Hence, for every $h$, the lower bound above is at least
$c(\rho)\Delta$.

We now remove from $X_j(\hat y)$ the small neighborhoods of the hyperplane
sections corresponding to the $U_h$. Define
\[
 E:=X_j(\hat y)\setminus
 \bigcup_{h=1}^{{N_0}}
 \iota_q^{-1}\!\left(N_\rho(\mathbb P U_h)\right).
\]
By \eqref{eq:nonconc}, each removed set has
$\widetilde m_{\hat y}$-mass smaller than $\varepsilon$. Since the maximal
section $X_j(\hat y)$ has mass $\gamma_0$, the union bound gives
\[
 \widetilde m_{\hat y}(E)
 \ge \gamma_0-{N_0}\varepsilon
 =\frac{3\gamma_0}{4}.
\]
In particular, after setting, for instance, $\delta_0:=\frac{\gamma_0}{2}$, we have
\[
 \widetilde m_{\hat y}(E)\ge\delta_0.
\]

On the other hand, by the choice of the hyperplanes $U_h$, every point of
$E$ is separated from every transported component. Identifying the
Grassmannian with its Pl\"ucker image, we obtain a constant $c_2>0$,
independent of the local pair, such that
\[
 \dist\!\left(
 E,
 \bigcup_h\mathbb P
 \bigl(\Lambda^qH^u_{\hat x,\hat y}\hat W_h(\hat x)\bigr)
 \right)
 \ge c_2\Delta.
\]
For the matched component this is exactly the conclusion of
\eqref{eq:principal-separation} and for the nonmatched components we use their
fixed separation from $S$ together with $\Delta\le\delta_1$.

It remains to convert this geometric separation into a lower bound for the
Wasserstein distance. Set
\[
 \nu:=(H^u_{\hat x,\hat y})_*\widetilde m_{\hat x}.
\]
The measure $\nu$ gives full mass to the transported arrangement. Let $\pi$ be
an arbitrary transport plan between $\widetilde m_{\hat y}$ and $\nu$. Since
the second marginal of $\pi$ is supported on the transported arrangement,
all the mass of $E$ must be paired with points at distance at least
$c_2\Delta$. More explicitly, if $\mathcal A$ denotes the transported
arrangement, then $\nu(\mathcal A)=1$, and therefore
\[
 \pi(E\times\mathcal A)=\widetilde m_{\hat y}(E)\ge\delta_0.
\]
Consequently,
\[
 \int d_{\Gr}(V,W)\,d\pi(V,W)
 \ge c_2\Delta\,\pi(E\times\mathcal A)
 \ge c_2\delta_0\Delta.
\]
Since this holds for every transport plan $\pi$, taking the infimum over all
such plans yields
\[
 W_1\bigl(\widetilde m_{\hat y},
          (H^u_{\hat x,\hat y})_*\widetilde m_{\hat x}\bigr)
 \ge c_2\delta_0\Delta.
\]
Combining this lower bound with \eqref{eq:W1}, we obtain
\[
 \Delta
 \le C d_\theta(\hat x,\hat y)^{\theta_s},
\]
which is precisely \eqref{eq:ambient-holder} for sufficiently close local unstable pairs. For the remaining local unstable pairs, the distance $d_\theta(\hat x,\hat y)$ is bounded below by the chosen closeness threshold, while the Grassmannian has finite diameter. Thus, after enlarging $C$, the same estimate follows trivially. This completes the proof.
\end{proof}

\section{From approximate to exact unstable invariance}\label{sec:strong-rigidity}

In this section, we use the strong bunching assumption and upgrade the previously obtained approximate unstable invariance of the finite arrangement into exact unstable-holonomy invariance. We start with an auxiliary lemma whose proof is independent of the cocycle and is given in Appendix~\ref{app:orbit-lemma}.

\begin{lemma}\label{lem:orbit-lifting}
Let $G=\GL(d,\R)$ and
\[
 {\mathscr M}=\Gr(d_0,\Lambda^q\R^d).
\]
We let $G$ act on ${\mathscr M}$ by $g\cdot W=\Lambda^qg(W)$.  Let
$\hat f_{\rm ext}:\widehat\Sigma_{\rm ext}\to\widehat\Sigma_{\rm ext}$
be a transitive finite extension of the two-sided subshift of finite type and let
$W:\widehat\Sigma_{\rm ext}\to{\mathscr M}$ be continuous with
\[
 W(\hat f_{\rm ext}\hat z)
   =\Lambda^q\hat A_{\rm ext}(\hat z)W(\hat z),
\]
where $\hat A_{\rm ext}$ is the pullback of $\hat A$ (recall Section \ref{sec:finite extensions}).  For any fully
supported ergodic Markov measure $\hat\mu_{\rm ext}$ on
$\widehat\Sigma_{\rm ext}$, there is a $G$-orbit
${\mathscr O}\subset{\mathscr M}$ such that
\[
 W(\hat z)\in{\mathscr O}
 \quad\text{for }\hat\mu_{\rm ext}\text{-a.e. }\hat z.
\]
Moreover there exist compact sets $K_m\subset{\mathscr O}$ with
$\bigcup_mK_m={\mathscr O}$ and constants $\rho_m,C_m>0$ with the following
relative lifting property: whenever $V\in K_m$ and $V_1,V_2\in{\mathscr O}$
satisfy $d_{\Gr}(V,V_i)<\rho_m$, there exists $g\in G$ such that
\begin{equation}\label{eq:relative-lift}
 gV_1=V_2 \; \text{ and }\; \norm{g-\Id}\le C_m d_{\Gr}(V_1,V_2).
\end{equation}
\end{lemma}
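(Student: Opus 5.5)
We split the statement into an ergodic-theoretic part (the a.e. orbit $\mathscr O$) and a Lie-theoretic part (compact exhaustion with quantitative relative lifting).

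\emph{The orbit $\mathscr O$.} The action of the real algebraic group $G=\GL(d,\R)$ on the projective variety $\mathscr M$ through $g\mapsto\Lambda^q g$ is algebraic. By the Chevalley/Rosenlicht theorem every orbit is locally closed, and the orbit space is countably separated: there is a countable family of $G$-invariant Borel sets separating orbits (equivalently, $\mathscr M$ has a finite $G$-invariant stratification by locally closed pieces on each of which the orbit map is a smooth submersion with a Borel cross-section). Consider the map $\hat z\mapsto \mathcal O(W(\hat z))$ into the orbit space. The equivariance $W(\hat f_{\rm ext}\hat z)=\Lambda^q\hat A_{\rm ext}(\hat z)W(\hat z)$ shows that it is $\hat f_{\rm ext}$-invariant. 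Composing it with each of the countably many invariant separating Borel sets therefore gives invariant measurable functions. Ergodicity of $\hat\mu_{\rm ext}$ forces each such function to be a.e. constant, so a single orbit $\mathscr O$ carries full measure. Transitivity of the extension is used only to guarantee that a fully supported ergodic Markov measure exists and is ergodic.

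\emph{Compact exhaustion and lifting.} Since $\mathscr O$ is locally closed, it is an embedded submanifold, diffeomorphic to $G/\mathrm{Stab}(V_0)$, and it is $\sigma$-compact. We take compact sets $K_m$ increasing to $\mathscr O$. We plan to choose them so that each point of $K_m$ lies in the interior (relative to $\mathscr O$) of $K_{m+1}$, and so that $K_m$ stays a definite distance from $\overline{\mathscr O}\setminus\mathscr O$.

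For $V\in K_m$ the orbit map $\pi_V:g\mapsto gV$ is a submersion at $\Id$. We choose a linear complement $\mathfrak s$ to $\mathfrak{stab}(V)$ in $\mathfrak{gl}(d)$. Then $X\mapsto \exp(X)V$, restricted to $\mathfrak s$, is a local diffeomorphism onto a neighborhood of $V$ in $\mathscr O$, and it gives a smooth local section $\sigma_V$ with $\sigma_V(V)=\Id$. The key point is that the intrinsic manifold metric on $\mathscr O$ and the ambient Grassmannian distance are locally bi-Lipschitz comparable near $V$, because $\mathscr O$ is an embedded submanifold. With this comparison, for $V_1,V_2$ near $V$ we set $g=\sigma_V(V_2)\sigma_V(V_1)^{-1}$. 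This gives $gV_1=V_2$, and the Lipschitz bound on $\sigma_V$ yields $\|g-\Id\|\le C\,d_{\Gr}(V_1,V_2)$.

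Uniformity over $K_m$ then follows from compactness. Cover $K_m$ by finitely many such charts, take a Lebesgue number to obtain $\rho_m$, and take the maximum of the Lipschitz constants to obtain $C_m$.

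\emph{Main obstacle.} The hardest part is the uniform bi-Lipschitz comparison between the ambient distance and the intrinsic orbit distance. For orbits that are not closed, points $V_1,V_2\in\mathscr O$ can be ambient-close while being far apart along the orbit, because $\mathscr O$ may accumulate on itself only through its boundary. Local closedness rules out such self-accumulation near any fixed $V\in\mathscr O$. We then shrink $\rho_m$ below half the distance from $K_m$ to $\overline{\mathscr O}\setminus\mathscr O$, so that the $\rho_m$-ball around $V$ meets $\mathscr O$ only in the chart neighborhood. This is exactly where the relative formulation (requiring $V\in K_m$ rather than allowing arbitrary $V\in\mathscr O$) is used.
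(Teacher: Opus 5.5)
Your proof is correct. Its second half (a smooth local section $\sigma$ of the orbit map on an embedded orbit, $g=\sigma(V_2)\sigma(V_1)^{-1}$, and a finite cover of $K_m$) is essentially the paper's argument. The difference is in how you obtain the orbit $\mathscr O$.

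The paper works over $\C$. Rosenlicht's theorem plus Noetherian induction gives a finite $\GL(d,\C)$-invariant stratification with geometric quotients whose fibers are exactly the complex orbits. Ergodicity, applied first to the stratum index and then to the quotient value, puts $W$ a.e.\ in one complex orbit $\mathscr O_{\C}$. Borel--Serre splits $\mathscr O_{\C}(\R)$ into finitely many $\GL(d,\R)$-orbits, and ergodicity selects one.

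You instead use in one step that real algebraic actions are tame: all real orbits are locally closed, hence, by Glimm--Effros, the orbit space is countably separated. You then apply ergodicity to countably many invariant Borel sets. This is shorter. It also has the merit that the same input, local closedness of $\mathscr O$, is exactly what the lifting step needs for the ambient/intrinsic bi-Lipschitz comparison; the paper only asserts this point.

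The cost is that your black box is itself proved from the paper's ingredients. Chevalley/Rosenlicht give local closedness only of \emph{complex} orbits. Passing to real orbits needs that $\mathscr O_{\C}(\R)$ is a finite union of open $\GL(d,\R)$-orbits (Whitney/Borel--Serre). So you should cite the real statement (the smoothness theorem for real algebraic actions, e.g.\ in Zimmer's \emph{Ergodic Theory and Semisimple Groups}) rather than attribute it to Chevalley/Rosenlicht. You should also drop the parenthetical ``equivalently \dots'', which is not an equivalence.

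Finally, your extra conditions on $K_m$ are unnecessary. What makes $\rho_m$ work is that each chart domain is relatively open in $\mathscr O$, i.e.\ equals $\mathscr O\cap\Omega_j$ with $\Omega_j$ open in $\mathscr M$. A Lebesgue number for the cover of the compact set $K_m$ by the $\Omega_j$ then works. Positive distance from $K_m$ to $\overline{\mathscr O}\setminus\mathscr O$ is automatic.
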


The main consequence of Lemma \ref{lem:orbit-lifting} which we are going to use below is that if two nearby subspaces belong to the same $\GL(d,\R)$-orbit, then there is an element of $\GL(d,\R)$, close to the identity, that sends one subspace to the other. This will help us turn the approximate unstable invariance into full invariance.

\begin{proposition}
\label{prop:strong-rigidity}
For every local unstable pair $\hat x,\hat y\in\widehat\Sigma$, the lifted
maximal arrangement satisfies
\begin{equation}\label{eq:HuW}
 \Lambda^qH^u_{\hat x,\hat y}
 \{\hat W_1(\hat x),\ldots,\hat W_{N_0}(\hat x)\}
 =
 \{\hat W_1(\hat y),\ldots,\hat W_{N_0}(\hat y)\}.
\end{equation}
Together with the exact stable invariance from
Proposition~\ref{prop:theta-s-arrangement}, the lifted arrangement is therefore
invariant under $\hat A$ and both canonical holonomies.
\end{proposition}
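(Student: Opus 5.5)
The strategy is the standard ``push forward by the cocycle and let bunching win'' argument. Fix a local unstable pair $\hat x,\hat y$ and match components locally as in Proposition~\ref{prop:theta-s-arrangement}. For $n\ge1$ put $\hat x_n=\hat f^{-n}\hat x$ and $\hat y_n=\hat f^{-n}\hat y$. These lie on a common local unstable set and satisfy $d_\theta(\hat x_n,\hat y_n)\le\theta^n d_\theta(\hat x,\hat y)$. Combining the equivariance of unstable holonomies with the cocycle invariance \eqref{eq:lift-arrangement-invariance}, we obtain
\[
 \Lambda^qH^u_{\hat x,\hat y}\hat W_i(\hat x)
 =\Lambda^q\hat A^n(\hat y_n)\,\Lambda^qH^u_{\hat x_n,\hat y_n}\hat W_{i'}(\hat x_n)
 \quad\text{and}\quad
 \hat W_j(\hat y)=\Lambda^q\hat A^n(\hat y_n)\hat W_{j'}(\hat y_n),
\]
where the indices are related by the permutations induced along the orbit. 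A naive estimate combines \eqref{eq:ambient-holder} at the pair $(\hat x_n,\hat y_n)$ with the Lipschitz bound \eqref{eq:prelim-grass-lip} for $\Lambda^q\hat A^n(\hat y_n)$. Since $\bol(\Lambda^qL)\le\bol(L)^q$, this gives a distance of order $\bol(\hat A^n)^{q}\theta^{n\theta_s}$. That bound is not obviously decaying, because \eqref{eq:eta-bunching-consequence} only controls $\bol\cdot\theta^{\eta_0 n}$ and not its $q$-th power. So the naive Grassmannian Lipschitz bound must be replaced by an action of $\GL(d,\R)$ itself.

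This is where Lemma~\ref{lem:orbit-lifting} enters. We pass to the finite extension labelling the components, so that each component becomes a continuous equivariant map $W:\widehat\Sigma_{\rm ext}\to\mathscr M$. For the Markov measure, almost every $W(\hat z)$ then lies in one orbit $\mathscr O$. Exact stable invariance and the $u$-state structure propagate this to the transported components $\Lambda^qH^u\hat W_i$, since holonomies are elements of $G$. By Poincaré recurrence we choose $n$ along a sequence of times with $\hat W(\hat x_n)\in K_m$, a fixed compact piece, and we may also take $\hat y_n$ in the good set. Then $\hat W_{j'}(\hat y_n)$ and $\Lambda^qH^u_{\hat x_n,\hat y_n}\hat W_{i'}(\hat x_n)$ are $\rho_m$-close to the same element of $K_m$, and the relative lift \eqref{eq:relative-lift} gives $g_n\in G$ with $\|g_n-\Id\|\le C_m\,C\theta^{n\theta_s}$ carrying one to the other. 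Conjugating, the element $h_n=\hat A^n(\hat y_n)g_n\hat A^n(\hat y_n)^{-1}$ maps $\Lambda^qH^u_{\hat x,\hat y}\hat W_i(\hat x)$ to $\hat W_j(\hat y)$. Its distance from the identity satisfies
\[
 \|h_n-\Id\|\le\|\hat A^n\|\,\|(\hat A^n)^{-1}\|\,\|g_n-\Id\|\le C\,\bol(\hat A^n(\hat y_n))\,\theta^{n\theta_s}\le C'\tau^n,
\]
by \eqref{eq:eta-bunching-consequence}, which is available because $d\ge3$ here and $\eta_0\le\theta_s$. Letting $n\to\infty$ along the recurrence times, $h_n\to\Id$. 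Hence $d_{\Gr}(\Lambda^qH^u_{\hat x,\hat y}\hat W_i(\hat x),\hat W_j(\hat y))=0$, first for almost every pair and then, by continuity of the arrangement and of the holonomies, for every local unstable pair. Combined with the matching, this yields \eqref{eq:HuW}.

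The main obstacle is justifying the recurrence step uniformly. We need $\hat x_n$ (backward iterates) to return to $K_m$ infinitely often while the matched pair at $\hat y_n$ also lies in $\mathscr O$ and within $\rho_m$ of it. This requires a Fubini argument along unstable leaves using local product structure, so that for $\hat\mu$-a.e. $\hat x$ almost every $\hat y$ on its local unstable leaf is good, together with backward Poincaré recurrence for the invertible $\hat f_{\rm ext}$. If this is awkward, the fallback is to first prove \eqref{eq:HuW} for $\hat\mu\times$leafwise-a.e. pairs and then extend by continuity, which is all the argument needs.
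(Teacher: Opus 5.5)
Your proposal is correct and follows the paper's proof. It uses the same finite extension, Lemma~\ref{lem:orbit-lifting} on a transitive component, backward recurrence into $K_m$ with a local-product (Fubini) choice of $\hat y$, the near-identity lift at time $-n$ controlled by \eqref{eq:ambient-holder}, strong bunching with $\eta_0\le\theta_s$, and extension by density. The only difference is the endgame: the paper applies the lift to the Pl\"ucker sections $X(W)\subset\Gr(q,d)$, pushes a Hausdorff estimate forward with \eqref{eq:prelim-grass-lip}, and then needs \eqref{eq:generated-decomp} to recover $W$; your conjugation $h_n=\hat A^n(\hat y_n)g_n\hat A^n(\hat y_n)^{-1}$ instead gives the equality directly in $\Lambda^q\R^d$ at the cost of a single factor $\bol(\hat A^n(\hat y_n))$, so the decomposable-span property is not needed (fix the target index $j$ along a subsequence by pigeonhole).
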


\begin{proof}
We begin by introducing a finite extension whose purpose is to replace the
unordered family of components by a single-valued object. Write
\[
 \mathcal W(\hat x):=\{\hat W_1(\hat x),\ldots,\hat W_{N_0}(\hat x)\}
\]
for the unordered arrangement and define intrinsically
\[
 \widehat\Sigma_{\rm ext}
 :=\{(\hat x,W):\hat x\in\widehat\Sigma,\ W\in\mathcal W(\hat x)\}.
\]
By \eqref{eq:lift-arrangement-invariance}, the map
\[
 \hat f_{\rm ext}(\hat x,W)
 :=\bigl(\hat f\hat x,\Lambda^q\hat A(\hat x)W\bigr)
\]
is a well-defined finite extension of $\hat f$. Thus a point of
$\widehat\Sigma_{\rm ext}$ records a base point and one chosen member of the
lifted arrangement, without requiring a global labeling. On a sufficiently
small cylinder, uniform separation and continuity give a continuous labeling of
the components. The permutation of these local labels induced by $\hat A$
is locally constant. Thus, after passing to a sufficiently high block presentation, it depends only on the current symbol. Hence the intrinsic cover above is conjugate to a finite permutation subshift of finite type of the type described in Section \ref{sec:finite extensions}. Consequently, the two-sided extension is a disjoint union of finitely many transitive components. Moreover, every admissible base path has a unique lift from each chosen point in its initial finite fiber. Since the base subshift is mixing and the extension has finite fibers, each transitive
component of $\widehat\Sigma_{\rm ext}$ projects onto the whole base $\widehat\Sigma$. Equivalently, every base point has a lift in every transitive component.

In the next result we show that two lifted points on the finite extension are on the same local unstable set exactly when their selected components are the ones matched along the corresponding unstable leaf by Proposition \ref{prop:theta-s-arrangement}.

\begin{lemma}\label{lem:finite-cover-unstable}
Let us work in a high-block presentation for which the finite permutation carried by
$\hat A$ depends only on the current symbol. Two points
$\hat z=(\hat x,W)$ and $\hat z'=(\hat y,W')$ of
$\widehat\Sigma_{\rm ext}$ belong to the same local unstable set of the finite
subshift of finite type cover if and only if $\hat y\in W^u_{\rm loc}(\hat x)$ and
$W'$ is the unique component over $\hat y$ matched to $W$ by the local matching
furnished by Proposition~\ref{prop:theta-s-arrangement}. Moreover, for every
$n\ge0$,
\[ \hat f_{\rm ext}^{-n}\hat z' \in W^u_{\rm loc}(\hat f_{\rm ext}^{-n}\hat z),
\]
and the selected components at time $-n$ are the corresponding matched pair.
\end{lemma}

\begin{proof}
In the high-block presentation, the finite extension is a permutation extension:
over each base symbol there is a finite set of labels, and the transition from
one time to the next is given by a permutation depending only on the current
base symbol. Thus, once the base bi-infinite sequence and the label at time $0$
are fixed, all labels at times $i\in\mathbb Z$ are uniquely determined.

Recall that two points of the two-sided symbolic cover belong to the same local
unstable set precisely when their coordinates agree for all times $i\le0$.
Therefore, two lifted points
\[ \hat z=(\hat x,W)\; \text{ and }\; \hat z'=(\hat y,W')\]
belong to the same local unstable set if and only if their base points
$\hat x$ and $\hat y$ have the same nonpositive coordinates and their labels determine
the same backward itinerary. In particular, once $W$ is fixed, there is a unique
choice of the component $W'$ over $\hat y$ for which the two lifted points have
the same nonpositive coordinates.

We now identify this symbolic choice with the geometric matching from
Proposition~\ref{prop:theta-s-arrangement}. Restrict first to a sufficiently small
local unstable neighborhood in which the components of the arrangement admit
continuous labels and remain uniformly separated. For a fixed component $W$ over
$\hat x$, Proposition~\ref{prop:theta-s-arrangement} gives a unique component
$W_{\rm geo}(\hat y)$ over $\hat y$ which is matched to $W$ along the unstable
direction. On the other hand, the symbolic construction above gives a component
$W_{\rm sym}(\hat y)$ over $\hat y$, determined by the common backward label
itinerary.

Both $W_{\rm geo}(\hat y)$ and $W_{\rm sym}(\hat y)$ depend continuously on $\hat y$ in this neighborhood. When $\hat y=\hat x$, both coincide with the prescribed component $W$. Since distinct components of the arrangement are uniformly separated, after shrinking the neighborhood if necessary the two continuous branches cannot switch from one component to another. Hence
\[
 W_{\rm sym}(\hat y)=W_{\rm geo}(\hat y).
\]
Thus the symbolic unstable relation in the finite cover is exactly the relation obtained by matching components of the arrangement along unstable leaves.

It remains to consider an arbitrary local unstable pair. If
$\hat y\in W^u_{\rm loc}(\hat x)$, then backward iteration contracts the unstable distance, so for all sufficiently large $n$ the pair $\hat f_{\rm ext}^{-n}\hat z$ and $\hat f_{\rm ext}^{-n}\hat z'$
lies in the small neighborhood considered above. Hence the selected components at time $-n$ form the geometrically matched pair. Since backward iteration preserves the common nonpositive symbolic itinerary, this matching is preserved at every backward time. Consequently, for every $n\ge0$,
\[ \hat f_{\rm ext}^{-n}\hat z' \in W^u_{\rm loc}(\hat f_{\rm ext}^{-n}\hat z),
\]
and the components selected at time $-n$ are precisely the corresponding matched components.
\end{proof}

We now fix one transitive component of $\widehat\Sigma_{\rm ext}$ and denote by
$W(\hat z)$ the component selected at $\hat z$. In this way the previously
unordered family becomes a continuous single-valued map $W$ on the chosen
transitive component. Let $\hat\mu_{\rm ext}$ be a fully supported ergodic
Markov measure on this component, and apply Lemma~\ref{lem:orbit-lifting}. The
lemma provides a $G$-orbit ${\mathscr O}$ containing $W(\hat z)$ for
$\hat\mu_{\rm ext}$-almost every $\hat z$, together with compact sets
$K_m\subset{\mathscr O}$ whose union is ${\mathscr O}$. Choose $m$ such that
\[
 \hat\mu_{\rm ext}\{\hat z:W(\hat z)\in K_m\}>0.
\]

By ergodicity, Birkhoff's theorem applied to the indicator of this positive-measure set shows that, for $\hat\mu_{\rm ext}$-almost every $\hat x$, the negative orbit of $\hat x$ visits the set infinitely many times. Fix such a point $\hat x$ and choose a sequence $n_j\to\infty$ such
that
\[
 W\bigl(\hat f_{\rm ext}^{-n_j}\hat x\bigr)\in K_m
 \;\text{ for every }j.
\]
We also choose $\hat x$ in the full-measure invariant set on which
$W(\hat f_{\rm ext}^{-n}\hat x)\in{\mathscr O}$ for every $n\ge0$.  By the local product
structure of the Markov measure, for almost every $\hat y\in W^u_{\rm loc}(\hat x)$
with respect to the unstable conditional measure,
$\hat f_{\rm ext}^{-n}\hat y$ belongs to this full-measure set for every $n\ge0$.

For such a point $\hat y$, define
\[
 \hat x_j=\hat f_{\rm ext}^{-n_j}\hat x \;\text{ and }\;
 \hat y_j=\hat f_{\rm ext}^{-n_j}\hat y.
\]
By construction,
\[
 W(\hat x_j)\in K_m \;\text{ and }\;
 W(\hat x_j),\,W(\hat y_j)\in{\mathscr O}.
\]
Moreover, since $\hat y\in W^u_{\rm loc}(\hat x)$, backward iteration contracts
the distance along the local unstable set, and therefore
\[
 d_\theta(\hat x_j,\hat y_j)\longrightarrow0
 \; \text{ as }j\to\infty.
\]
Thus, far enough in the past, the two base points are arbitrarily close, while
the corresponding selected components remain in the same $G$-orbit and the
component over $\hat x_j$ lies in the fixed compact set $K_m$. These are exactly
the conditions needed in the next step to apply the relative lifting property
of Lemma~\ref{lem:orbit-lifting} with uniform constants.
  By  \eqref{eq:leafwise-package} we have that
$H^u_{\hat x_j,\hat y_j}\to\Id$. Thus, $\Lambda^qH^u_{\hat x_j,\hat y_j}W(\hat x_j)$ is close to $W(\hat x_j)$. Moreover, Proposition~\ref{prop:theta-s-arrangement} implies that
\[
 d_{\Gr}\bigl(W(\hat y_j),
        \Lambda^qH^u_{\hat x_j,\hat y_j}W(\hat x_j)\bigr)
 \le C d_\theta(\hat x_j,\hat y_j)^{\theta_s}.
\]
Thus, for all large $j$, $W(\hat y_j)$ is also close to
$W(\hat x_j)$ and both subspaces lie in the fixed lifting neighborhood of
$W(\hat x_j)\in K_m$.  Applying \eqref{eq:relative-lift} gives
$R_j\in\GL(d,\R)$ such that
\begin{equation}\label{eq:Rj}
 W(\hat y_j)
 =\Lambda^qR_j\,\Lambda^qH^u_{\hat x_j,\hat y_j}W(\hat x_j)
 \;\text{ and }\;
 \norm{R_j-\Id}
 \le C_m d_\theta(\hat x_j,\hat y_j)^{\theta_s}.
\end{equation}

For a Pl\"ucker subspace $W<\Lambda^q\R^d$ recall that $X(W)=\iota_q^{-1}(\mathbb PW)$. Thus, since the Pl\"ucker embedding is equivariant under the action of $\GL(d,\R)$, equation \eqref{eq:Rj} gives the exact identity
\[
 X(W(\hat y_j))
 =R_jH^u_{\hat x_j,\hat y_j}X(W(\hat x_j)).
\]
Moreover, using that $R_j=\Id+O(d_\theta(\hat x_j,\hat y_j)^{\theta_s})$, the compactness of the Grassmannian gives us that
\[
 d_H\bigl(X(W(\hat y_j)),
          H^u_{\hat x_j,\hat y_j}X(W(\hat x_j))\bigr)
 \le C d_\theta(\hat x_j,\hat y_j)^{\theta_s}.
\]
Thus, far in the past, the two sets of $q$-planes are very close.

We now transport this estimate forward from time $-n_j$ to time $0$ and for this we shall use the Lipschitz estimate \eqref{eq:prelim-grass-lip} which measures how much an invertible linear map can enlarge distances in the
Grassmannian. Applying $\hat A^{n_j}(\hat y_j)$ to the preceding Hausdorff
estimate, and using both the cocycle invariance of $W$ and the equivariance of
unstable holonomies, we obtain
\begin{align*}
 d_H\bigl(X(W(\hat y)),
           H^u_{\hat x,\hat y}X(W(\hat x))\bigr)\le
 C\,\bol(\hat A^{n_j}(\hat y_j))
 d_\theta(\hat x_j,\hat y_j)^{\theta_s}.
\end{align*}
Note that the left-hand side no longer depends on $j$. We will show that the right-hand side tends to zero.

Since $\theta_s\ge\eta_0$ and $d_\theta\le1$,
\[
 d_\theta(\hat x_j,\hat y_j)^{\theta_s}
 \le d_\theta(\hat x_j,\hat y_j)^{\eta_0}.
\]
For a local unstable pair the symbolic metric satisfies
\[
 d_\theta(\hat f^{-n}\hat x,\hat f^{-n}\hat y)
 \le \theta^n d_\theta(\hat x,\hat y)\; \text{ for every }\; n\ge0.
\]
By Lemma~\ref{lem:finite-cover-unstable}, the same estimate applies to the
chosen lifts in the finite extension, so
$d_\theta(\hat x_j,\hat y_j)\le
\theta^{n_j}d_\theta(\hat x,\hat y)$.  Moreover,
\eqref{eq:eta-bunching-consequence} is still available for the reduced
cocycle, with possibly changed multiplicative constants, by Proposition \ref{prop:linear-reduction}.  Combining these two estimates gives
\[
 \bol(\hat A^{n_j}(\hat y_j))
 d_\theta(\hat x_j,\hat y_j)^{\eta_0}
 \le C\tau^{n_j}d_\theta(\hat x,\hat y)^{\eta_0}
 \longrightarrow0.
\]
Hence the fixed quantity on the left-hand side of the previous Hausdorff
estimate must be zero. Therefore
\[
 X(W(\hat y))
 =H^u_{\hat x,\hat y}X(W(\hat x))
\]
for the generic local unstable pairs under consideration.

Finally, by \eqref{eq:generated-decomp}, each subspace $W$ is spanned by the
Pl\"ucker vectors corresponding to the $q$-planes in $X(W)$. Thus equality of
the two Pl\"ucker sets implies equality of their linear spans, and therefore
\[
 \Lambda^qH^u_{\hat x,\hat y}W(\hat x)=W(\hat y)
\]
for those generic pairs.

It remains only to remove the genericity assumption. Both
$W(\hat z)$ and the unstable holonomies depend continuously on their
arguments. The fully supported Markov measure on the finite subshift of finite type extension has
local product structure, and the full-measure set used above is therefore
dense in the local unstable relation. Approximating an arbitrary local
unstable pair by generic ones and passing to the limit extends the equality to
every local unstable pair in the chosen transitive component. Hence the
selected branch is exactly invariant under unstable holonomies on that
component. Repeating the argument on each transitive component of the finite
extension and then forgetting the finite label gives \eqref{eq:HuW} for every
local unstable pair downstairs. The proof is complete.
\end{proof}

The exact stable and unstable invariance shows that every based holonomy loop
at $\hat p$ permutes the finite family
\[
 \mathcal W_{\hat p}
 =\{\hat W_1(\hat p),\ldots,\hat W_{N_0}(\hat p)\}.
\]
By Proposition~\ref{prop:linear-reduction}\textup{(iv)}, the reduction made
before choosing the bad blocks removes this finite monodromy: every based
holonomy loop fixes every member of $\mathcal W_{\hat p}$ individually.

\section{The final rank-one contradiction}\label{sec:final}
In this section we complete the proof of the main theorem. We use again the
bad orbit blocks constructed earlier, together with the invariant subspace
obtained in the previous section. After normalization, the corresponding
maps on the exterior power converge to a nonzero rank-one map. Proposition~\ref{prop:linear-reduction} shows that this limiting map must preserve a
certain invariant splitting. We then show that the rank-one form of the
limit is not compatible with this splitting, which gives the desired
contradiction.

\begin{proof}[Proof of Theorem~\ref{thm:main}]
We now use the \emph{same} bad sequence $D_j$, the same plateau boundary
$q$, and the same Pl\"ucker covector $\ell_q$ that produced the maximal
arrangement (recall the end of Section \ref{sec:arrangement}).  Recall that $p=P\hat p$.  Since the preferred stable
representative of $p$ is $\hat p$, we have
$\hat W_i(\hat p)=W_i(p)$.  Put
\[
 W_{\hat p}:=\hat W_{i_0}(\hat p)=W_{i_0}(p)
\]
where $i_0$ is again as in the end of Section \ref{sec:arrangement}.
By \eqref{eq:WinH},
\begin{equation*}\label{eq:Wp-kernel}
 0<W_{\hat p}\subset\ker\ell_q<\Lambda^qE_{\hat p}.
\end{equation*}
By Proposition~\ref{prop:linear-reduction}\textup{(iv)}, every based holonomy
loop at $\hat p$ fixes $W_{\hat p}$ individually.  This is precisely why the
reduction is made before the bad blocks are chosen: the invariant component
and the limiting singular data come from the same sequence.

We next arrange that the left fast Pl\"ucker direction of the original bad
sequence also lies in $W_{\hat p}$.  Since
$m_p(X_{i_0}(p))=\gamma_0>0$, choose
\[
 V\in\supp(m_p)\cap X_{i_0}(p).
\]
Since $V\in\supp m_p$ and, by \eqref{eq:q-marginal-commutes-reduction},
$m_p=(\pi_q)_*\bar m_p$, compactness of the flag fiber gives a flag
$\zeta\in\supp\bar m_p\subset\hat{\mathscr K}_{\hat p}$ whose
$q$-coordinate is $V$.  Equation \eqref{eq:path-support} therefore gives
holonomy paths
$G_m:E_{\hat y}\to E_{\hat p}$, of orbit lengths $L_m$, such that
\begin{equation*}\label{eq:GmFq}
 G_mF_q\longrightarrow V.
\end{equation*}

For each fixed $m$, Lemma~\ref{lem:path-perturb} gives paths
$G_{m,j}:E_{\hat y_j}\to E_{\hat p}$ with $G_{m,j}\to G_m$ as $j\to\infty$.
Since $F_{q,j}\to F_q$, a diagonal choice $m=m(j)\to\infty$ can be made
sufficiently slowly such that 
\begin{equation*}\label{eq:slow-diagonal}
 \frac{L_{m(j)}+
       \log^+\norm{G_{m(j),j}}+
       \log^+\norm{G_{m(j),j}^{-1}}}{N_j}\rightarrow0
 \; \text{ and }\;
 G_{m(j),j}F_{q,j}\rightarrow V.
\end{equation*}
Indeed, for each fixed $m$ we have $G_{m,j}F_{q,j}\to G_mF_q$, while
$L_m+\log^+\|G_{m,j}\|+\log^+\|G_{m,j}^{-1}\|=O_m(1)$ as $j\to\infty$. Hence, after division by
$N_j\to\infty$, this last quantity tends to zero.  Since $G_mF_q\to V$, we may choose
an increasing sequence $k_m$ such that for all $j\ge k_m$ both the quotient above and
$d_{\Gr}(G_{m,j}F_{q,j},V)$ are smaller than $1/m$.  Defining
$m(j)=\max\{m\le j:k_m\le j\}$ then gives $m(j)\to\infty$ and proves the two asserted
limits.
Moreover, by Lemma~\ref{lem:exact-concat}, the compositions are genuine based holonomy
loops
\[
 B_j'=G_jD_j:E_{\hat p}\to E_{\hat p},
\]
where $G_j:=G_{m(j),j}$.
The diagonal conditions above give
\begin{equation}\label{eq:left-subexp}
 \log\norm{G_j^{\pm1}}=o(N_j)
 \; \text{ and } \;
 G_jF_{q,j}\longrightarrow V.
\end{equation}

At the genuine boundary $q$ there is $\delta=a_q-a_{q+1}>0$ and an singular-value decomposition expansion (recall the proof of Lemma \ref{lem:two-boundary} where a similar construction was used)
\[
 \Lambda^qD_j
 =s_{q,j} u_{q,j}\otimes v_{q,j}^*+R_{q,j}
\; \text{ with }\; \frac{\norm{R_{q,j}}}{s_{q,j}}=e^{-\delta N_j+o(N_j)},
\]
where $s_{q,j}=\sigma_1(D_j)\ldots\sigma_q(D_j)$, $[u_{q,j}]$ represents $F_{q,j}$ and $v_{q,j}^*$ is a covector of $\Lambda^qD_j$ such that $\ker v_{q,j}^*$ defines, under the Pl\"ucker embedding, the hyperplane section of $q$-planes meeting $S_{q,j}$ so that $v_{q,j}^*\to\ell_q$ by the normalization
fixed above (recall \eqref{eq:vqj converges to lq}).  Put $L_{q,j}=\Lambda^qG_j$.  Then
\[
 \Lambda^qB_j'
 =s_{q,j}(L_{q,j}u_{q,j})\otimes v_{q,j}^*+L_{q,j}R_{q,j}.
\]
By \eqref{eq:left-subexp},
\[
 \frac{\norm{L_{q,j}R_{q,j}}}{s_{q,j}\norm{L_{q,j}u_{q,j}}}
 \le
 \bol(L_{q,j})\frac{\norm{R_{q,j}}}{s_{q,j}}
 \le
 \bol(G_j)^q e^{-\delta N_j+o(N_j)}
 \longrightarrow0.
\]
Moreover $[L_{q,j}u_{q,j}]\to\iota_q(V)$.  Hence, after fixing unit representatives and
passing to a subsequence,
\begin{equation}\label{eq:rankone}
 \frac{\Lambda^qB_j'}{\norm{\Lambda^qB_j'}}
 \longrightarrow u\otimes\ell_q,
\end{equation}
where $u$ is a nonzero Pl\"ucker vector with $[u]=\iota_q(V)$.  Therefore
\begin{equation}\label{eq:uW}
 0\ne u\in W_{\hat p} \; \text{ and } \;\ell_q|_{W_{\hat p}}=0.
\end{equation}

Now, every $B_j'$ is a based holonomy loop at $\hat p$, so
\[
 \Lambda^qB_j'(W_{\hat p})=W_{\hat p}
\]
by Proposition~\ref{prop:linear-reduction}\textup{(iv)}.  By
Proposition~\ref{prop:linear-reduction}\textup{(v)}, there is a complementary
subspace $W_{\hat p}'$ invariant under every based holonomy loop such that
\[
 \Lambda^qE_{\hat p}=W_{\hat p}\oplus W_{\hat p}'.
\]
Every operator on the left-hand side of \eqref{eq:rankone} is block diagonal
for this decomposition, so its limit $u\otimes\ell_q$ is block diagonal as
well.  But \eqref{eq:uW} gives us
\[
 (u\otimes\ell_q)(W_{\hat p})=0 \; \text{ and }\;
 (u\otimes\ell_q)(W_{\hat p}')\subset W_{\hat p}.
\]
Block diagonality forces the second image to lie in $W_{\hat p}'$ as well.
Hence it must be zero.  Thus $u\otimes\ell_q=0$, contradicting
\eqref{eq:rankone}.  This contradiction proves Theorem~\ref{thm:main}.
\end{proof}

\begin{remark}\label{rem:role-strong-bunching}
We briefly indicate the precise role of the strong bunching hypothesis in out proof.  Up to the construction of the finite invariant Pl\"ucker arrangement and its transverse regularity, the argument uses only fiber bunching and the corresponding existence and regularity of the canonical stable and unstable holonomies, as expressed in \eqref{eq:leafwise-package} and \eqref{eq:rectangle-package}.  Strong bunching is used only in the final step, where we have to upgrade the transverse H\"older regularity of the arrangement to exact invariance under unstable holonomies. More precisely, the strong bunching hypothesis is used to ensure that the transverse exponent $\theta_s$ can be chosen compatibly with the strong bunching exponent $\eta_0$, namely
\[
\eta_0\le\theta_s\; \text{ and }\;
\bol(\hat A^n(\hat z))\theta^{\eta_0n}\le C_B\tau^n
\quad \forall n\in \mathbb N \text{ and }\hat z\in\widehat\Sigma,
\]
for some $C_B>0$ and $0<\tau<1$. Since $0<\theta<1$ and $\eta_0\le\theta_s$, this gives
\begin{equation}\label{eq:rem-est}
\bol\bigl(\hat A^n(\hat f^{-n}\hat y)\bigr)
\theta^{n\theta_s}
\longrightarrow0.
\end{equation}
Thus \eqref{eq:rem-est} is the precise quantitative estimate from strong bunching that is used in the final unstable-holonomy argument.
In particular, the proof does not use the numerical exponent $\alpha/3$ in the strong bunching assumption directly. Rather, it uses the existence of an exponent $\eta_0\le\theta_s$ for which the preceding exponential estimate holds. Consequently, any weaker hypothesis that directly implies \eqref{eq:rem-est} (and is preserved under the reductions made in the proof) would be sufficient for the argument to go through.
\end{remark}

\begin{remark}
The reduction in Proposition~\ref{prop:linear-reduction} is deliberately kept outside the dynamical argument, but its timing matters: it must be performed before the bad blocks are chosen.  The final contradiction uses the same plateau boundary, the same
limiting right singular covector, and the same invariant component.  Reducing
first guarantees the two structural properties used at the end: finite
monodromy is absent and common invariant subspaces have common invariant
complements.
\end{remark}

\appendix
\section{The linear-algebraic reduction}\label{app:reduction}

This appendix contains the proof of Proposition~\ref{prop:linear-reduction}.
The goal is to justify the algebraic reduction used in the main proof without
putting these details in the main body of the paper where we have focused on the dynamical part of the argument. We aim to provide comprehensive details concerning the algebraic objects and constructions involved in the proof once we have a dynamical systems audience in mind.

The reduction used in this appendix combines ideas concerning Zariski closures and semisimplification of linear cocycles and representations. For instance,  the passage from the Zariski closure to a reductive Levi factor is closely related to the semisimplification procedure used by Kassel--Potrie \cite[Section 2.9]{KasselPotrie} in the setting of eigenvalue gaps for locally constant linear cocycles.

\subsection{Zariski closure and connected components}

A \emph{real algebraic subgroup} of $\GL(V)$ is a subgroup which is the common zero
set, inside $\GL(V)$, of a family of real polynomial functions in the matrix
entries.  If $\Gamma<\GL(V)$ is any subgroup, its \emph{real Zariski closure}
$\overline\Gamma^{\,Z}$ is the smallest real algebraic subgroup containing
$\Gamma$.  Equivalently, every polynomial identity which holds on $\Gamma$ holds
on $\overline\Gamma^{\,Z}$.

Every real algebraic group $H$ has finitely many connected components in the
Zariski topology.  We denote by $H^\circ$ the identity component.  It is a normal
algebraic subgroup of finite index.  Thus the component group $H/H^\circ$ is
finite.  We shall exploit this finiteness by recording the component in a finite
symbolic extension.

\subsection{Unipotent radicals, reductive groups and Levi decomposition}

A matrix $u$ is \emph{unipotent} if all its eigenvalues are equal to $1$.  The
\emph{unipotent radical} $R_u(H)$ of a real algebraic group $H$ is the largest
connected normal algebraic subgroup consisting of unipotent elements.  The group
$H$ is called \emph{reductive} if $R_u(H)$ is trivial.  In characteristic zero,
this is equivalent to the representation-theoretic statement that every
finite-dimensional algebraic representation of $H$ is completely reducible.
In particular, if $H$ is reductive and $W$ is an $H$-invariant subspace of a
finite-dimensional algebraic representation $E$, then there is an $H$-invariant
complement $W'$ with $E=W\oplus W'$.

We use the following standard form of the Levi decomposition theorem (see \cite[Chapter~6, Section~4]{OnishchikVinberg}). See also
~\cite[Theorem~7.1]{Mostow} and \cite[Section 2.9]{KasselPotrie}. 

\begin{theorem}
\label{thm:levi-background}
Let $H<\GL(V)$ be a real algebraic group, not necessarily connected, and let
$U=R_u(H)$ be its unipotent radical.  There is a reductive real algebraic
subgroup $L<H$ for which the multiplication map
\[
 L\ltimes U\longrightarrow H,
 \qquad (\ell,u)\longmapsto \ell u,
\]
is an isomorphism of real algebraic groups.  In particular every $h\in H$ has
a unique factorization $h=\ell u$, and
\[
 \pi_L:H\longrightarrow L,\qquad \pi_L(\ell u)=\ell,
\]
is an algebraic group homomorphism.  Moreover every finite-dimensional
algebraic representation of $L$ is completely reducible.
\end{theorem}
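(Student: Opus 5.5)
The statement immediately above is Theorem~\ref{thm:levi-background}, the Levi decomposition for a (possibly disconnected) real algebraic group $H<\GL(V)$. I would not reprove it from scratch. Instead I would reduce it to the connected case via Mostow's theorem, following \cite[Theorem~7.1]{Mostow} and \cite[Chapter~6, Section~4]{OnishchikVinberg}, and then derive the listed consequences formally.

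\emph{Step 1: the unipotent part.} Set $U=R_u(H)$. Since $U$ is the largest connected normal unipotent algebraic subgroup, it is characteristic in $H^\circ$. Hence $U=R_u(H^\circ)$, and $U$ is normal in all of $H$. The quotient $H/U$ is then a real algebraic group, and its unipotent radical is trivial, so $H/U$ is reductive.

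\emph{Step 2: existence of the complement.} In characteristic zero, Mostow's theorem provides a reductive algebraic subgroup $L<H$ with $H=L\ltimes U$. This holds even when $H$ is disconnected, and any two such Levi factors are conjugate by an element of $U$.

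If one wants to argue directly, the route is as follows. First take a Levi factor $L^\circ$ of $H^\circ$ by the connected theorem. Conjugacy of Levi factors under $U$ then shows that the normalizer $N_H(L^\circ)$ meets every coset of $H^\circ$. Finally, the finite group $N_H(L^\circ)/(N_H(L^\circ)\cap U)$ has to be split back into $H$. Since $N_H(L^\circ)\cap U$ centralizes $L^\circ$ and is unipotent, this extension splits by the vanishing of $H^1$ and $H^2$ of finite groups with coefficients in a unipotent group over $\mathbb Q$-divisible coefficients. \textbf{This splitting of the component group is the main obstacle}, and it is exactly what Mostow's result supplies.

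\emph{Step 3: the remaining claims.} Because $L\cap U$ is unipotent and normal in the reductive group $L$, it is trivial. So the multiplication map $L\ltimes U\to H$ is a bijective morphism. In characteristic zero a bijective morphism of algebraic groups with this structure is an isomorphism, which gives uniqueness of the factorization $h=\ell u$. The map $\pi_L$ is the composition $H\to H/U\cong L$, so it is an algebraic homomorphism.

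Complete reducibility of finite-dimensional algebraic representations of $L$ follows from reductivity in characteristic zero, as recalled just before the theorem. Concretely, $L^\circ$ is connected reductive, so its representations are semisimple via a compact real form or Weyl's unitary trick. One then averages an $L^\circ$-invariant projection over the finite group $L/L^\circ$ to obtain an $L$-invariant complement.
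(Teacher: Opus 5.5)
Your proposal is correct and follows the paper's route. The paper gives no proof of this statement; it cites the Levi--Mostow decomposition (Onishchik--Vinberg, Ch.~6, \S4, and Mostow's Theorem~7.1, which already covers disconnected $H$), and the remaining consequences follow as you describe.

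One slip in your optional direct sketch: $N_H(L^\circ)/(N_H(L^\circ)\cap U)\cong H/U$ is not finite. The extension you actually need to split is $N_H(L^\circ)/L^\circ$, an extension of the finite group $H/H^\circ$ by the unipotent group $Z_U(L^\circ)=N_H(L^\circ)\cap U$. The preimage in $N_H(L^\circ)$ of a finite complement is then the Levi factor.
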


For the reader unfamiliar with this terminology, the useful picture is the
following.  The unipotent radical is the part of the group responsible for
upper-triangular ``shearing''.  Passing from $H$ to its Levi factor $L$ discards
those shears but keeps the diagonal blocks, and therefore keeps all eigenvalues.
For example, for matrices
\[
 h=\begin{pmatrix} a&t\\0&b\end{pmatrix},
 \qquad a,b\ne0,
\]
the shear parameter $t$ is unipotent information, while the Levi projection is
$\operatorname{diag}(a,b)$.  Conjugating by
$D_s=\operatorname{diag}(e^{-s},1)$ sends the upper-right entry to $e^{-s}t$,
so $D_shD_s^{-1}$ converges to the diagonal matrix as $s\to\infty$.  The
semisimplification below is the block version of this elementary operation.

\subsection{Why finite monodromy disappears}

We shall repeatedly use one elementary consequence of connectedness.  If a
connected real algebraic group $H$ acts algebraically on a finite set, then the
action is trivial.  Indeed, the stabilizer of a point, that is, the set of all group elements that leave the point fixed under the group action, is an algebraic subgroup of
finite index, and a connected algebraic group has no proper algebraic subgroup of
finite index.  Equivalently, an algebraic map from a connected algebraic variety
to a finite discrete set is constant.

\subsection{Reduction of a counterexample}\label{sec:early-ss}

Let $\mathscr G_{\hat A}$ be the linear groupoid whose objects are the fibers $E_{\hat x}$ and which is generated by the cocycle arrows $\hat A(\hat x)^{\pm1}$ and by the canonical local stable and unstable holonomies and their inverses. Here an \emph{arrow} simply means an invertible linear map between two fibers, regarded as a morphism in the groupoid. Concretely, this only means that we allow finite compositions of these linear maps whenever the target fiber of one map is the source fiber of the next. Inverses are allowed as well. Thus every holonomy path map is an arrow of $\mathscr G_{\hat A}$.  After replacing $\hat f$ by an iterate we
fix a point $\hat p\in\Fix(\hat f)$ and put
\[
 \Gamma_{\hat p}:=\mathscr G_{\hat A}(\hat p,\hat p)<\GL(E_{\hat p}).
\]
Choose, for each one-cylinder $C_a$, a reference point $\hat x_a\in C_a$ and a
fixed groupoid arrow $P_a:E_{\hat p}\to E_{\hat x_a}$.  Such an arrow exists because the
mixing subshift of finite type admits an admissible orbit word from the symbol of $\hat p$ to $a$. Then, adding the local stable and unstable legs gives a groupoid path from $\hat p$ to $\hat x_a$.  If $\hat x\in C_a$, set $\hat z=[\hat x_a,\hat x]$ and
\begin{equation}\label{eq:path-gauge}
 \Theta_{\hat x}=H^s_{\hat z,\hat x}H^u_{\hat x_a,\hat z}P_a:E_{\hat p}\longrightarrow E_{\hat x}.
\end{equation}
By the leafwise holonomy estimate and the transverse rectangle estimate
\eqref{eq:leafwise-package}--\eqref{eq:rectangle-package}, the family
$\Theta_{\hat x}$ and its inverse are bounded and $\theta_s$-H\"older.  Indeed, if $\hat x$ and $\hat x'$ lie in the same local stable set inside
$C_a$, then $[\hat x_a,\hat x]=[\hat x_a,\hat x']$, and the composition law for $H^s$, together with
\eqref{eq:leafwise-package}, gives
$\norm{\Theta_{\hat x}-\Theta_{\hat x'}}=O(d_\theta(\hat x,\hat x')^{\theta_s})$.  If $\hat x$ and $\hat x'$ lie in the
same local unstable set, then the four points
$[\hat x_a,\hat x],[\hat x_a,\hat x'],\hat x,\hat x'$ form a local product rectangle (recall Section \ref{sec:holonomies}) and thus combining
\eqref{eq:rectangle-package} with \eqref{eq:leafwise-package} gives the same
estimate.  For arbitrary nearby points $\hat x$ and $\hat x'$ in $C_a$, insert the bracket
$[\hat x,\hat x']$ and combine the stable and unstable estimates.  Since there are only
finitely many one-cylinders, the estimate is global.  The inverse family has
the same regularity by the identity
$\Theta_{\hat x}^{-1}-\Theta_{\hat x'}^{-1}=\Theta_{\hat x}^{-1}(\Theta_{\hat x'}-\Theta_{\hat x})\Theta_{\hat x'}^{-1}$.
Using these identifications, we represent the cocycle on the fixed fiber $E_{\hat p}$ by
\begin{equation}\label{eq:gauged-cocycle}
 \hat B(\hat x)=\Theta_{\hat f\hat x}^{-1}\hat A(\hat x)\Theta_{\hat x}\in\Gamma_{\hat p}.
\end{equation}
Likewise, every canonical holonomy, when expressed with respect to these identifications of the fibers with $E_{\hat p}$, belongs to $\Gamma_{\hat p}$. In
particular, if
\[
 H=\overline{\Gamma_{\hat p}}^{\,Z}<\GL(E_{\hat p})
\]
is the real Zariski closure, then the whole range of $\hat B$ is contained in $H$.
Notice that this definition avoids any identification of the based path group with a separately generated matrix semigroup: membership in $\Gamma_{\hat p}$ follows exactly from the composition identity \eqref{eq:gauged-cocycle}.

\begin{lemma}\label{lem:ss}
There are a reductive real algebraic subgroup $L<\GL(E_{\hat p})$, a homomorphism
\[
 \pi_{\rm ss}:H\longrightarrow L,
\]
and constant matrices $D_n\in\GL(E_{\hat p})$ such that, uniformly for $h$ in compact
subsets of $H$,
\begin{equation}\label{eq:ss-limit}
 D_nhD_n^{-1}\longrightarrow\pi_{\rm ss}(h).
\end{equation}
Moreover one may choose a Euclidean norm on $E_{\hat p}$ for which
\begin{equation}\label{eq:ss-bol}
 \bol(\pi_{\rm ss}(h))\le \bol(h),
\end{equation}
and $h$ and $\pi_{\rm ss}(h)$ have the same eigenvalues, with algebraic
multiplicity.
\end{lemma}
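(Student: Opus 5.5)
We plan to use the Levi decomposition of Theorem~\ref{thm:levi-background} together with a one-parameter group contracting the unipotent radical. Let $U=R_u(H)$, choose a reductive Levi factor $L<H$ with $H=L\ltimes U$, and put $\pi_{\rm ss}:=\pi_L$. This is an algebraic homomorphism, and it is the identity on $L$. The key tool is a cocharacter-type filtration adapted to $U$. Since $U$ is a unipotent normal subgroup, it stabilizes a flag $0=V_0<V_1<\cdots<V_m=E_{\hat p}$, for instance the flag defined by $V_{i+1}/V_i=(E_{\hat p}/V_i)^U$.

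Because $U$ is normal in $H$, each fixed-point space is $H$-invariant, so the whole flag is $H$-invariant and $U$ acts trivially on each graded piece. Since $L$ is reductive, complete reducibility gives $L$-invariant complements, so $E_{\hat p}=\bigoplus_i Z_i$ with $Z_i$ being $L$-invariant and $V_i=Z_1\oplus\cdots\oplus Z_i$. In this adapted basis:
\begin{itemize}
\item every $\ell\in L$ is block diagonal;
\item every $u\in U$ is block upper triangular with identity diagonal blocks;
\item hence every $h=\ell u\in H$ is block upper triangular, with diagonal blocks equal to those of $\ell=\pi_{\rm ss}(h)$.
\end{itemize}

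Next, set $D_n:=\bigoplus_i e^{ni}\Id_{Z_i}$, choosing the signs so that the conjugation multiplies the $(i,j)$ block with $i<j$ by $e^{-n(j-i)}$. Then $D_nhD_n^{-1}$ keeps the diagonal blocks and multiplies the off-diagonal blocks by factors at most $e^{-n}$. This gives \eqref{eq:ss-limit}, uniformly on compact subsets of $H$ because the entries of $h$ are bounded there. The eigenvalue statement follows from the block triangular form: the characteristic polynomial of $h$ is the product of those of its diagonal blocks, which coincide with those of $\pi_{\rm ss}(h)$.

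For \eqref{eq:ss-bol}, choose a Euclidean norm making the $Z_i$ mutually orthogonal. Then $\|\pi_{\rm ss}(h)\|=\max_i\|h_{ii}\|$. A diagonal block is a compression of $h$, so $\|h_{ii}\|\le\|h\|$, and hence $\|\pi_{\rm ss}(h)\|\le\|h\|$. For the inverse, $h^{-1}$ is again block upper triangular with diagonal blocks $h_{ii}^{-1}$, so $\|\pi_{\rm ss}(h)^{-1}\|=\max_i\|h_{ii}^{-1}\|\le\|h^{-1}\|$. Multiplying the two inequalities gives $\bol(\pi_{\rm ss}(h))\le\bol(h)$. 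Alternatively, this follows from $\|D_nhD_n^{-1}\|$ being nonincreasing in $n$ for such block-triangular matrices, and similarly for the inverse.

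The main subtle step is confirming that the $U$-fixed flag is $H$-invariant and that $U$ is genuinely trivial on the graded pieces. This needs normality of $U$ and the Kolchin-type fact that a unipotent group has nonzero fixed vectors in each nonzero quotient. We also need to check that $L$-invariant complements can be chosen to be nested, which we handle by induction on the flag using complete reducibility of $L$ applied to each inclusion $V_{i-1}<V_i$. The space $E_{\hat p}$ is real and all groups are real algebraic, so these facts apply over $\R$.
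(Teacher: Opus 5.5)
Your proposal is correct and follows essentially the same route as the paper's proof. Both use the Levi decomposition $H=L\ltimes U$ with $\pi_{\rm ss}=\pi_L$, the iterated $U$-fixed flag made $H$-invariant by normality of $U$, nested $L$-invariant complements from complete reducibility, a diagonal conjugation $D_n$ with increasing exponents, and a norm making the blocks orthogonal, so that $\|h_{ii}\|\le\|h\|$ and $\|h_{ii}^{-1}\|\le\|h^{-1}\|$.

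Your ``alternatively'' remark, that $\|D_nhD_n^{-1}\|$ is nonincreasing in $n$, is true but would need its own argument (e.g.\ a maximum-principle argument for $z\mapsto\|\sum_{i\le j}z^{j-i}h_{ij}\|$), and it is not needed since the compression argument already suffices.
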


\begin{proof}
The construction is the block version of the semisimplification used in
\cite[Section~2.9]{KasselPotrie}.  Choose, by
Theorem~\ref{thm:levi-background}, a Levi decomposition
$H=L\ltimes U$, where $U=R_u(H)$, and let $\pi_L:H\to L$ be the algebraic
projection.

We first construct an $L$-invariant filtration explicitly.  Put $F_0=\{0\}$ and let
$F_1=E_{\hat p}^{U}$ be the set of all vectors in $E_{\hat p}$ which are invariant under every element of $U$.
A connected unipotent
group acting on a nonzero finite-dimensional real vector space has a nonzero
fixed vector after complexification and hence a nonzero real fixed space. Thus
$F_1\ne0$ unless $E_{\hat p}=0$.  Since $U$ is normal in $H$, $F_1$ is
$H$-invariant.  Inductively, having defined $F_{i-1}$, let
$F_i/F_{i-1}$ be the $U$-fixed subspace of $E_{\hat p}/F_{i-1}$ and take
$F_i$ to be its inverse image.  Again normality of $U$ makes $F_i$
$H$-invariant.  The dimension increases at every nonterminal step, so after
finitely many steps
\[
 0=F_0<F_1<\cdots<F_m=E_{\hat p}.
\]
By construction, $U$ acts trivially on each quotient $F_i/F_{i-1}$.

The filtration is $L$-invariant.  Since $L$ is reductive, complete
reducibility allows us to choose $L$-invariant complements
$V_i\subset F_i$ such that
$F_i=F_{i-1}\oplus V_i$.  Thus
$E_{\hat p}=V_1\oplus\cdots\oplus V_m$.  With respect to this decomposition,
every $h\in H$ preserves the filtration and hence is block upper triangular.
If $h=\ell u$ with $\ell\in L$ and $u\in U$, then the action induced by $u$
on $F_i/F_{i-1}$ is the identity.  Therefore the $i$-th diagonal block of
$h$ is exactly the action of $\ell=\pi_L(h)$ on $V_i\simeq F_i/F_{i-1}$.
Consequently the block diagonal part of $h$ is precisely the matrix of
$\pi_L(h)$ on $\bigoplus_iV_i$.  We set
\[
 \pi_{\rm ss}:=\pi_L:H\longrightarrow L
\]
in these coordinates.  Since a block triangular matrix and its block diagonal
part have the same characteristic polynomial, $h$ and $\pi_{\rm ss}(h)$ have
the same eigenvalues with algebraic multiplicity.

Choose the Euclidean norm for which the spaces $V_i$ are mutually orthogonal.
Orthogonal projection onto the diagonal blocks then gives
\[
 \norm{\pi_{\rm ss}(h)}
   =\max_i\norm{h_{ii}}\le\norm h.
\]
The inverse of a block upper triangular matrix is block upper triangular with
diagonal blocks $h_{ii}^{-1}$, so the same argument applied to $h^{-1}$ gives
\[
 \norm{\pi_{\rm ss}(h)^{-1}}
   \le\norm{h^{-1}}.
\]
Multiplying these inequalities proves \eqref{eq:ss-bol}.  Replacing the
original Euclidean norm by this one changes the previously established norm
estimates only by uniform multiplicative constants and therefore does not
affect any exponential bunching rate.

Finally choose real numbers
$a_1<a_2<\cdots<a_m$ and, for $n\ge1$, define the constant diagonal map
\[
 D_n|_{V_i}=e^{a_i n}\Id_{V_i}.
\]
If $h=(h_{ij})$ is block upper triangular, then the $(i,j)$ block of
$D_nhD_n^{-1}$ is
$e^{(a_i-a_j)n}h_{ij}$.  For $i<j$ this tends exponentially to zero, while
for $i=j$ it is $h_{ii}$.  Thus
\[
 D_nhD_n^{-1}\longrightarrow
 \operatorname{diag}(h_{11},\ldots,h_{mm})=\pi_{\rm ss}(h).
\]
If $K\subset H$ is compact, all off-diagonal block norms $\norm{h_{ij}}$ are
bounded uniformly for $h\in K$. Hence the convergence is uniform on $K$.
This proves \eqref{eq:ss-limit} and completes the lemma.
\end{proof}

Set $\hat B_{\rm ss}=\pi_{\rm ss}\circ\hat B$.  Since $\pi_{\rm ss}$ is smooth on the
algebraic group $H$ and $\hat B(\widehat\Sigma)$ is compact, $\hat B_{\rm ss}$ is $\theta_s$-H\"older.  For
a periodic point $\hat x$,
\[
 \hat B_{\rm ss}^n(\hat x)=\pi_{\rm ss}(\hat B^n(\hat x)),
\]
so Lemma \ref{lem:ss} shows that the periodic eigenvalues, and hence the
periodic-gap hypothesis, are unchanged.

The following persistence statement is the key point in the algebraic reduction. 

\begin{lemma}\label{lem:package-persistence}
Suppose a two-sided cocycle $\hat C$ has stable and unstable holonomies satisfying
\eqref{eq:leafwise-package} and \eqref{eq:rectangle-package} with exponent
$\theta_s$.  When $d\ge3$, suppose in addition that it satisfies
\eqref{eq:eta-bunching-consequence} with $\eta_0\le\theta_s$.

\begin{enumerate}[label=(\alph*)]
\item If $\hat C_\Theta(\hat x)=\Theta_{\hat f\hat x}^{-1}\hat C(\hat x)\Theta_{\hat x}$, where $\Theta$ and
$\Theta^{-1}$ are bounded and $\theta_s$-H\"older, then $\hat C_\Theta$ is
$\theta_s$-H\"older, its holonomies are
\[
 H^{*,\Theta}_{\hat x,\hat y}=\Theta_{\hat y}^{-1}H^*_{\hat x,\hat y}\Theta_{\hat x} \; \text{ for }\; *=s,u,
\]
and the estimates \eqref{eq:leafwise-package} and
\eqref{eq:rectangle-package} continue to hold, with changed constants but the
same exponent $\theta_s$.  Moreover
\begin{equation}\label{eq:gauge-bol}
 \bol(\hat C_\Theta^n(\hat x))\le C_\Theta^4\bol(\hat C^n(\hat x))
 \; \text{ with }\;
 C_\Theta=\sup_{\hat x}\max\{\norm{\Theta_{\hat x}},\norm{\Theta_{\hat x}^{-1}}\}.
\end{equation}
Thus \eqref{eq:eta-bunching-consequence} also persists, after changing its
multiplicative constant.

\item Let $H_{\hat C}$ be an algebraic subgroup containing the range of $\hat C$ and all
of its holonomies, and let $\pi:H_{\hat C}\to L$ be a smooth algebraic homomorphism
such that $\bol(\pi(h))\le\bol(h)$.  Then $\hat C_\pi=\pi\circ\hat C$ is
$\theta_s$-H\"older, has holonomies $\pi(H^*)$, and satisfies \eqref{eq:leafwise-package} and
\eqref{eq:rectangle-package} with the same exponent, and, when $d\ge3$,
satisfies \eqref{eq:eta-bunching-consequence} with no worse exponential
constant.

\item Pullback to a finite extension, restriction to a transitive component,
and passage to an iterate preserve these properties (with the evident change
of constants and base contraction rate).
\end{enumerate}
In particular, when $d\ge3$, every two-sided cocycle produced by these operations is
$\theta_s$-H\"older and, after an iterate, $\theta_s$-fiber-bunched by
\eqref{eq:eta-bunching-consequence}.  In that case the transported holonomies
coincide with the canonical holonomies of the resulting fiber-bunched cocycle,
by the usual uniqueness of the canonical holonomy limits.
\end{lemma}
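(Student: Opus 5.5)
We verify the three parts separately. Each amounts to checking that the holonomy identities and the quantitative estimates \eqref{eq:leafwise-package}, \eqref{eq:rectangle-package}, \eqref{eq:eta-bunching-consequence} survive conjugation, an algebraic homomorphism, and the symbolic operations. The uniqueness statement at the end will follow from the standard characterization of canonical holonomies.

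For (a), we first note that $\hat C_\Theta$ is $\theta_s$-H\"older: it is a product of bounded $\theta_s$-H\"older maps, and $\hat C$ itself is $\alpha$-H\"older with $\alpha\ge\theta_s$. Next we check the holonomy identities for $H^{*,\Theta}_{\hat x,\hat y}=\Theta_{\hat y}^{-1}H^*_{\hat x,\hat y}\Theta_{\hat x}$. Composition and $H^{*,\Theta}_{\hat x,\hat x}=\Id$ are immediate, and equivariance follows by inserting $\Theta_{\hat f\hat y}\Theta_{\hat f\hat y}^{-1}$. For the leafwise estimate we write
\[
H^{*,\Theta}_{\hat x,\hat y}-\Id=\Theta_{\hat y}^{-1}(H^*_{\hat x,\hat y}-\Id)\Theta_{\hat x}+\Theta_{\hat y}^{-1}(\Theta_{\hat x}-\Theta_{\hat y}).
\]
Both terms are $O(d_\theta(\hat x,\hat y)^{\theta_s})$. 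Here we use $\theta_s\le\alpha$ to absorb the first term into the H\"older bound of $\Theta$; this weakens the leafwise exponent from $\alpha$ to $\theta_s$, which is harmless because only $\theta_s$ is used later. For the rectangle estimate, the two compositions in \eqref{eq:rectangle-package} conjugate to $\Theta_{\hat y'}^{-1}(\cdots)\Theta_{\hat x}$ with the same inner difference, so the bound transfers with constant multiplied by $C_\Theta^2$. Finally, $\hat C_\Theta^n(\hat x)=\Theta_{\hat f^n\hat x}^{-1}\hat C^n(\hat x)\Theta_{\hat x}$ gives \eqref{eq:gauge-bol} by submultiplicativity of norms, and hence \eqref{eq:eta-bunching-consequence} with a larger $C_B$.

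For (b), we use that $\pi$ is a smooth homomorphism on $H_{\hat C}$ and the relevant matrices lie in a compact subset $K$ of $H_{\hat C}$ (the range of $\hat C$ together with the bounded holonomies). Hence $\pi$ is Lipschitz on $K$ and on the compact set of pairwise products appearing in \eqref{eq:rectangle-package}. The homomorphism property makes $\pi(H^*)$ satisfy composition and equivariance for $\hat C_\pi$. Lipschitz continuity, together with $\pi(\Id)=\Id$, transfers the two H\"older estimates. Since $\hat C_\pi^n=\pi(\hat C^n)$, the hypothesis $\bol(\pi(h))\le\bol(h)$ gives \eqref{eq:eta-bunching-consequence} verbatim.

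For (c), pullback to a finite extension leaves the iterates unchanged by \eqref{eq:finite-extension-iterates}. The holonomies are pulled back along the local stable and unstable sets of the extension, and these project isometrically onto base leaves for suitable $d_\theta$. Restriction to a transitive component is trivial. Passage to an iterate $\hat f^r$ replaces $\theta$ by $\theta^r$ and $\tau$ by $\tau^r$, so all three estimates persist with the same exponents.

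For the final claim, we take an iterate $N$ large enough that \eqref{eq:eta-bunching-consequence} gives $\bol(\hat C^N)\theta^{N\eta_0}<1$. Then the cocycle is $\theta_s$-fiber-bunched, since $\eta_0\le\theta_s$. The transported families are equivariant, satisfy the composition law, and are H\"older along leaves with exponent $\theta_s$. The standard uniqueness argument then identifies them with the canonical limits: for $\hat y\in W^s_{\rm loc}(\hat x)$ write
\[
H_{\hat x,\hat y}=\hat C^n(\hat y)^{-1}H_{\hat f^n\hat x,\hat f^n\hat y}\hat C^n(\hat x),
\]
and fiber bunching forces the contribution of $H_{\hat f^n\hat x,\hat f^n\hat y}-\Id$ to vanish as $n\to\infty$. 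The main point needing care is this uniqueness step in (b), where a priori only $\theta_s$-regularity is available rather than $\alpha$-regularity. This is exactly why the bunching must be checked at exponent $\theta_s$, which the inequality $\eta_0\le\theta_s$ provides.
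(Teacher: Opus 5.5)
Your proposal is correct and takes essentially the paper's route: you conjugate the holonomies and the rectangle difference by $\Theta$, push the leafwise and rectangle estimates through the locally Lipschitz homomorphism $\pi$ using $\pi(\Id)=\Id$, and note that pullbacks, restrictions and iterates leave the estimates intact with $\theta\mapsto\theta^r$. The one small difference is the final identification with canonical holonomies, which you do by a single uniqueness argument under $\theta_s$-bunching while the paper passes to the canonical limits operation by operation; both rest on the same estimate $\bol(\hat C^n)\theta^{\theta_s n}\to0$.
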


\begin{proof}
For a fiberwise change of coordinates, the formula for the transformed holonomies follows from equivariance. The leafwise estimate follows from that formula, the
$\theta_s$-H\"older regularity of $\Theta$, and boundedness of all factors.
For a local product rectangle, the two transported path maps have the same
initial and terminal fibers, and their difference transforms exactly as
\[
 H^{u,\Theta}_{\hat y,\hat y'}H^{s,\Theta}_{\hat x,\hat y}
 -H^{s,\Theta}_{\hat x',\hat y'}H^{u,\Theta}_{\hat x,\hat x'}
 =\Theta_{\hat y'}^{-1}
 \bigl(H^u_{\hat y,\hat y'}H^s_{\hat x,\hat y}
       -H^s_{\hat x',\hat y'}H^u_{\hat x,\hat x'}\bigr)\Theta_{\hat x}.
\]
This proves persistence of \eqref{eq:rectangle-package}.  The identity
\[
 \hat C_\Theta^n(\hat x)=\Theta_{\hat f^n\hat x}^{-1}\hat C^n(\hat x)\Theta_{\hat x}
\]
gives \eqref{eq:gauge-bol}.

For the homomorphism $\pi$, equivariance and the composition identities are
preserved because $\pi$ is a homomorphism.  On the compact set containing the
relevant holonomy products, $\pi$ is uniformly Lipschitz.  Thus, writing the
two sides of the rectangle as $M_1,M_2\in H_{\hat C}$,
\[
 \norm{\pi(M_1)-\pi(M_2)}\le C_\pi\norm{M_1-M_2},
\]
which preserves the exponent $\theta_s$. The leafwise estimate is obtained in
the same way near the identity.  The bolicity assertion is immediate from the
hypothesis on $\pi$.  We now verify the remaining operations.  For a finite subshift of finite type extension with natural projection
$\rho:\widehat\Sigma'\to\widehat\Sigma$, equip the extension with a distance with the same parameter $\theta$ and consider the pullback cocycle $\hat C'=\hat C\circ\rho$.  Equality of extension symbols implies equality of
their base symbols, so
$d_\theta(\rho\hat x',\rho\hat y')\le d_\theta(\hat x',\hat y')$.  Hence the
pullback holonomies
\[
 H^{*,\prime}_{\hat x',\hat y'}
 :=H^*_{\rho\hat x',\rho\hat y'}\; \text{ for }\; *=s,u,
\]
satisfy \eqref{eq:leafwise-package} and
\eqref{eq:rectangle-package} with the same exponents and no worse
multiplicative constants.  Also
$\hat C'^n(\hat z')=\hat C^n(\rho\hat z')$, so
\eqref{eq:eta-bunching-consequence} is unchanged.  Restriction to an invariant
transitive component only decreases the range of points and therefore changes
none of these estimates.

For the $r$-th iterate, write
$\hat C^{(r)}(\hat x)=\hat C^r(\hat x)$ and view it over $\hat f^r$.  The local
stable and unstable sets are unchanged, while their contraction factor becomes
$\theta^r$.  For $m\ge0$,
\[
 (\hat C^{(r)})^m(\hat x)=\hat C^{rm}(\hat x),
\]
so \eqref{eq:eta-bunching-consequence} becomes
\[
 \bol((\hat C^{(r)})^m(\hat x))(\theta^r)^{\eta_0 m}
 \le C_B(\tau^r)^m.
\]
The leafwise and rectangle estimates retain the exponent $\theta_s$ (after a
harmless change of constants if one recodes $\hat f^r$ as a one-step subshift of finite type).
Moreover the stable holonomy of the iterate is
\[
 \lim_{m\to\infty}\hat C^{rm}(\hat y)^{-1}\hat C^{rm}(\hat x)
 =H^s_{\hat x,\hat y},
\]
and similarly for the unstable holonomy.  The same limit calculation shows
that the pullback holonomies on a finite extension are the canonical
holonomies of the pullback cocycle.  For a homomorphic image, continuity of
$\pi$ gives the canonical limit $\pi(H^*)$, and for a fiberwise conjugacy the
canonical limit is the transported holonomy displayed in part~(a).
Consequently, whenever the resulting cocycle is fiber-bunched, all holonomies
used here are its canonical holonomies, not merely auxiliary equivariant
families.  Finally, if $d\ge3$, $\eta_0\le\theta_s$ and $\nu<1$ imply
$\nu^{\theta_s n}\le\nu^{\eta_0 n}$, so
\eqref{eq:eta-bunching-consequence} yields the required fiber bunching after a
large iterate.
\end{proof}

Apply Lemma \ref{lem:package-persistence} first to the fiberwise change of coordinates given by \eqref{eq:path-gauge} and then to the Levi homomorphism $\pi_{\rm ss}$. Hence $\hat B_{\rm ss}(\hat x)=\pi_{\rm ss}(\hat{B}(\hat x))$ where $\hat B$ is given by \eqref{eq:gauged-cocycle} carries the inherited stable and unstable holonomies, the same transverse exponent $\theta_s$, and, when $d\ge3$, the estimate \eqref{eq:eta-bunching-consequence}. The group of based path maps associated with $\hat B_{\rm ss}$ is $\pi_{\rm ss}(\Gamma_{\hat p})$ and has Zariski closure $L$.

Finally, if $\hat B_{\rm ss}$ were index-$k$ dominated, openness of domination and
\eqref{eq:ss-limit} would imply that $D_n\hat B D_n^{-1}$ is index-$k$ dominated for
all sufficiently large $n$, and hence that $\hat B$, and therefore $\hat A$, is dominated.
Thus a non-dominated cocycle with the periodic gap and the quantitative package semisimplifies to another cocycle with those properties.

We also remove the finite component group at this stage.

\begin{lemma}
\label{lem:connected-cover}
Starting from the counterexample above, equipped with the quantitative package of Lemma \ref{lem:package-persistence}, after finitely many
operations of the following three types,
\begin{enumerate}[label=(\roman*)]
\item bounded $\theta_s$-H\"older fiberwise change of coordinates;
\item the semisimplification of Lemma~\ref{lem:ss};
\item passage to a finite subshift of finite type extension and then to a transitive cyclic
component,
\end{enumerate}
one obtains a counterexample with the same periodic-gap hypothesis whose group of based path maps has connected reductive Zariski closure.
\end{lemma}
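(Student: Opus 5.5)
We first use the finite component group of the Zariski closure to build a finite extension, and then semisimplify on that extension. Let $H=\overline{\Gamma_{\hat p}}^{\,Z}$ be the Zariski closure of the based path group after the gauge \eqref{eq:path-gauge}, and put $F:=H/H^\circ$, which is finite. Every arrow $\hat B(\hat x)$ and every gauged holonomy lies in $H$, so each has a well-defined class in $F$. Define $\beta(\hat x)\in\operatorname{Sym}(F)$ as left multiplication by the class of $\hat B(\hat x)$. Since $\hat B$ is continuous and $H^\circ$ is open in the Euclidean topology on $H$, this class is locally constant. After a higher-block recoding, $\beta$ depends only on the current symbol, as in Section~\ref{sec:finite extensions}. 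On the extension we keep the pullback cocycle $\hat B_{\rm ext}(\hat x,a)=\hat B(\hat x)$. By Section~\ref{sec:finite extensions} and Lemma~\ref{lem:package-persistence}(c), the periodic eigenvalues (hence the periodic gap), the failure of domination on some transitive component, and the quantitative package with exponent $\theta_s$ are all preserved. We restrict to a non-dominated transitive component and, by Remark~\ref{rem:mixing-reduction}, pass to an iterate on a mixing cyclic piece. We then choose a fixed point $\hat p'$ there and redo the path gauge \eqref{eq:path-gauge} at $\hat p'$. This is an operation of type (i).

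The key claim is that the new based path group $\Gamma'$ at $(\hat p',a_0)$ lies in $H^\circ$, after conjugating by the gauge arrow. A based loop in the extension is a groupoid word that returns to the same label. The label moves by the $F$-classes of the cocycle arrows. Along local stable and unstable legs the label is transported by the symbolic matching, and this should correspond to holonomies whose class is trivial, because holonomies near the identity lie in $H^\circ$. Composing the classes along a closed loop therefore gives the identity in $F$, so every loop lies in $H^\circ$. I expect this bookkeeping to be the main obstacle. One has to check that the classes of the gauged holonomies are locally constant and agree with how the extension's local stable and unstable sets are defined. It may be necessary to enlarge the label to record the holonomy class as well, or to absorb these classes into the reference arrows $P_a$.

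With $\overline{\Gamma'}^{\,Z}\subset H^\circ$ connected, we apply Lemma~\ref{lem:ss}, an operation of type (ii). Its image $L'=\pi_{\rm ss}(\overline{\Gamma'}^{\,Z})$ is reductive, and it is connected because it is the image of a connected group under an algebraic homomorphism. It is also the Zariski closure of the new path group $\pi_{\rm ss}(\Gamma')$. The argument following Lemma~\ref{lem:package-persistence} shows that the periodic gap is kept because eigenvalues are unchanged, that non-domination is kept by openness of domination together with \eqref{eq:ss-limit}, and that the quantitative package is kept with the same $\theta_s$.

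If the Zariski closure of $\Gamma'$ turns out to be a proper subgroup of $H^\circ$ with several components, the procedure can simply be repeated. The number of steps is finite because the pair $(\dim,\#\text{components})$ of the Zariski closure decreases in the lexicographic order at each nontrivial step. The dimension cannot increase, since each new based path group maps into the identity component of the previous closure. The process stops at a connected reductive closure, which is the required counterexample.
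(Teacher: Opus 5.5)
Your outline follows the paper's proof: a component-group extension via $\chi\circ\hat B$, restriction to a non-dominated transitive component, loops landing in the identity component, and termination by dimension. However, the step that carries the lemma is left open, namely that a based loop closing in one lifted component has linear part in $H^\circ$. The reason you suggest does not establish it. A gauged holonomy $\Theta_{\hat y}^{-1}H^{*}_{\hat x,\hat y}\Theta_{\hat x}$ is near $\Id$ only when $\hat x$ and $\hat y$ are close. Local leaves are totally disconnected, so continuity cannot carry the trivial class from the diagonal to a whole leaf. Before recoding, the class can genuinely be nontrivial, e.g.\ for a locally constant cocycle depending on $x_{-1},x_0,x_1$ with values in the monomial matrices.

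The missing idea is to push the pair forward until your near-identity observation applies. By equivariance,
\[
 \Theta_{\hat y}^{-1}H^s_{\hat x,\hat y}\Theta_{\hat x}
 =\hat B^n(\hat y)^{-1}\bigl(\Theta_{\hat f^n\hat y}^{-1}H^s_{\hat f^n\hat x,\hat f^n\hat y}\Theta_{\hat f^n\hat x}\bigr)\hat B^n(\hat x).
\]
For large $n$ the middle factor is a gauged loop close to $\Id$. It therefore lies in $H^\circ$, because the cosets of $H^\circ$ are Euclidean-open. Hence the class of the gauged holonomy equals $\beta^n(\hat y)^{-1}\beta^n(\hat x)$ for all large $n$; this is the paper's formula $\chi(H^s_{\hat x,\hat y})=\lim_n\beta^n(\hat y)^{-1}\beta^n(\hat x)$. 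It is exactly the label shift $b=\chi(H^s_{\hat x,\hat y})a$ relating lifts $(\hat x,a)$ and $(\hat y,b)$ on a common local stable set of the extension, and backward iterates handle $H^u$. Therefore, along any lifted path the label changes by the class of its linear path map, so a loop closing in the extension lies in $\ker\chi=H^\circ$. No enlarged labels or modified $P_a$ are needed.

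With this supplied, your reordering is a valid variant of the paper's semisimplify-then-cover alternation. You take component covers on the possibly non-reductive closure first and semisimplify once at the end. A connected group has connected Levi image. Also, if $\overline{\Gamma'}^{\,Z}\subset H^\circ$ is disconnected, then $\dim\overline{\Gamma'}^{\,Z}<\dim H$, since equal dimension would force $\overline{\Gamma'}^{\,Z}=H^\circ$. Note that inclusion in $H^\circ$ alone does not give connectedness, as your third paragraph briefly suggests.
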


\begin{proof}
Apply Lemma \ref{lem:ss}. We are reduced to a counterexample for which the Zariski closure $L$ of the group of based path maps is reductive. Rename the semisimplified cocycle and its representative obtained from the fiber identifications as $\hat A$ and $\hat B$, respectively. Thus, from this point on, the values of $\hat B$ and all canonical holonomies, when expressed with respect to these identifications, lie in $L$. If $L$ is connected, there is nothing to prove. Otherwise, let $L^\circ$ denote the connected component of the identity in $L$ in the Zariski topology. In particular, $L^\circ$ is a normal subgroup of $L$ of finite index. Thus, we may consider
\[ Q=L/L^\circ\]
and let $\chi:L\to Q$ be the quotient map. Using the above fiber identifications, define
\[\beta(\hat x)=\chi(\hat B(\hat x)).\]
Since $Q$ is finite and $\hat B$ is continuous, $\beta$ is locally constant. After passing to a higher-block presentation, it defines a finite-group subshift of finite type extension
\[ \hat f_{\rm ext}(\hat x,a) =(\hat f\hat x,\beta(\hat x)a) \; \text{ for }\; a\in Q.\]

The change in the $Q$-coordinate along every lifted path map is the image under $\chi$ of its corresponding linear map. For orbit maps this follows directly from the definition. For example, if $\hat x,\hat y$ are in the same local stable set, then
\[ H^s_{\hat x,\hat y} =\lim_{n\to\infty}  \hat B^n(\hat y)^{-1}\hat B^n(\hat x).\]
Applying $\chi$ gives
\[ \chi(H^s_{\hat x,\hat y}) = \lim_{n\to\infty} \beta^n(\hat y)^{-1}\beta^n(\hat x),\]
and, since $Q$ is finite, this sequence is eventually constant. Thus two lifts $(\hat x,a)$ and $(\hat y,b)$ belong to the same local stable set precisely when
\[ b=\chi(H^s_{\hat x,\hat y})a.\]
The analogous backward formula gives the same statement for unstable holonomies. Consequently, for any lifted path obtained by composing cocycle maps and holonomies, the change in its $Q$-coordinate is exactly the image under $\chi$ of the associated linear path map. Hence a closed path based at a point of a single lifted component has trivial change in the $Q$-coordinate, and its linear part lies in
\[ \ker\chi=L^\circ.\]

The pullback cocycle on the whole finite extension is non-dominated, since domination there would imply domination on the base. Since the extension has only finitely many transitive components, at least one of them is non-dominated. Restrict to such a component. Choose a periodic point $\hat p'$ in it and pass, if necessary, to a suitable iterate and cyclic component so that the restricted base is mixing and $\hat p'$ is fixed. Let $K$ be the Zariski closure of the group of based path maps at $\hat p'$, using new fiber identifications if necessary.

Changing the base point in a transitive system of cocycle and holonomy paths conjugates the corresponding groups of based path maps by a path map joining the two base points. In the present coordinates, this conjugating map belongs to $L$. Since $L^\circ$ is normal in $L$, conjugation by such a map preserves $L^\circ$. Therefore the conclusion obtained above for closed paths in the chosen lifted component gives
\begin{equation*}\label{eq:K-in-L0}
K\subset L^\circ.
\end{equation*}
Conjugation also preserves algebraic dimension, connectedness, and reductivity. The periodic-gap hypothesis and the quantitative estimates used later survive this restriction by Lemma \ref{lem:package-persistence}.

The following dimension observation makes the termination of the procedure
explicit.

\medskip
\noindent\textbf{Claim.}
Suppose $G$ is the Zariski closure at the beginning of one stage of the
procedure.  After semisimplification let $L_G=\pi_{\rm ss}(G)$, and, if
$L_G$ is disconnected, after passage to the component cover and restriction
to a non-dominated transitive component let $K_G$ be the new based-path
Zariski closure.  Then
\[
 \dim L_G\le\dim G \; \text{ and  }\; K_G\subset L_G^\circ.
\]
Moreover, either $K_G=L_G^\circ$, in which case the new closure is connected
and reductive, or
\[
 \dim K_G<\dim L_G\le\dim G.
\]

\smallskip
\noindent Indeed, $L_G$ is the image of $G$ under the algebraic homomorphism
$\pi_{\rm ss}$, so $\dim L_G\le\dim G$.  The component-cover argument above
gives $K_G\subset L_G^\circ$.  Since
$\dim L_G^\circ=\dim L_G$, equality of dimensions
$\dim K_G=\dim L_G$ forces $K_G=L_G^\circ$. Otherwise the dimension is
strictly smaller.  This proves the claim.

Returning to the present stage, if $\dim K=\dim L$, then the claim gives
$K=L^\circ$, so $K$ is connected and reductive and we are done.  If
$\dim K<\dim L$, repeat the construction with $K$ in place of $L$, first
choosing the corresponding bounded fiberwise identifications at the new base
point.  At every stage that does not terminate with a connected reductive
closure, the claim gives a strict decrease of the nonnegative integer
``dimension of the based-path Zariski closure''.  Hence only finitely many
stages are possible, and the procedure terminates with a counterexample whose
based-path Zariski closure is connected and reductive.
\end{proof}

We now observe that the preceding lemmas prove Proposition~\ref{prop:linear-reduction}. Indeed, the chosen fiber identifications allow us to represent the two-sided cocycle and all holonomies as linear maps on one fixed fiber. Moreover,
the construction in Lemma~\ref{lem:ss} deletes the upper-triangular shear without changing
periodic eigenvalues, and Lemma~\ref{lem:package-persistence} shows that the
dynamical estimates persist.  Lemma~\ref{lem:connected-cover} removes the
finite component group.

For completeness, let $\Gamma$ be the final group of based path maps and let
$L=\overline\Gamma^{\,Z}$, which is connected and reductive. Suppose that
$\Gamma$ permutes a finite family of subspaces. The kernel $\Gamma_0$ of this
finite permutation action has finite index in $\Gamma$. If
$L_0=\overline{\Gamma_0}^{\,Z}$, then $L$ is the union of finitely many cosets of $L_0$.
Thus $L_0$ is an algebraic subgroup of finite index in $L$, and connectedness
forces $L_0=L$. Since the stabilizer of each member of the finite family is
Zariski closed and contains $\Gamma_0$, it contains $L$. Hence every
element of $\Gamma$ fixes every member individually. This proves item
\textup{(iv)}.

Finally, if a subspace $W$ in an exterior power is invariant under $\Gamma$,
then it is invariant under $L$ because its stabilizer is Zariski closed.
Complete reducibility of the exterior-power representation of the reductive
group $L$ gives an $L$-invariant, and therefore $\Gamma$-invariant,
complementary subspace. This proves item \textup{(v)} and completes the proof of
Proposition~\ref{prop:linear-reduction}.
\qed

\section{Proof of Lemma~\ref{lem:orbit-lifting}}\label{app:orbit-lemma}

This appendix proves Lemma~\ref{lem:orbit-lifting}. Its purpose is simple:
if two nearby subspaces belong to the same $\GL(d,\R)$-orbit, we need an
element of $\GL(d,\R)$, close to the identity, that sends one subspace to the
other. This is the only consequence of the algebraic orbit argument that is
used in the main proof.

The proof relies on three standard results. First, Rosenlicht's theorem is
used to show that the measurable family $W(\hat z)$ is contained almost
everywhere in a single complex orbit. Second, the fact that the real points of
this complex orbit split into finitely many real $\GL(d,\R)$-orbits follows from
Borel--Serre \cite[Corollary~6.4]{BorelSerre}. Finally, the quantitative
estimate for two nearby subspaces follows from the local section theorem for
smooth submersions (see \cite[Theorem~4.26]{LeeSmooth}). We include the details
because the main proof requires the explicit estimate
\eqref{eq:relative-lift}.

Let
\[
G=\GL(d,\R),\;
 G_{\C}=\GL(d,\C)
 \;\text{and}\;
 {\mathscr M}_{\C}=\Gr(d_0,\Lambda^q\C^d).
\]
We use Rosenlicht's theorem in its geometric-quotient form
\cite{Rosenlicht}. For an irreducible variety this is also stated explicitly
in \cite[Theorem~1.1]{BellGhiocaReichstein}.  It says that a nonempty
$G_{\C}$-variety contains a nonempty $G_{\C}$-invariant Zariski-open subset
$S$ admitting a morphism
\[
 \pi:S\longrightarrow Z
\]
whose fibers are individual $G_{\C}$-orbits.  Applying this statement by
Noetherian induction gives the finite stratification needed here.  Namely,
start with $Y_0={\mathscr M}_{\C}$.  If $Y_r\ne\varnothing$, choose such an
invariant open subset $S_{r+1}\subset Y_r$ and put
$Y_{r+1}=Y_r\setminus S_{r+1}$.  Then $Y_{r+1}$ is a proper
$G_{\C}$-invariant closed algebraic subset of $Y_r$.  Because the Zariski
topology is Noetherian, the strictly descending chain
$Y_0\supsetneq Y_1\supsetneq\cdots$ terminates after finitely many steps.
Thus
\[
 {\mathscr M}_{\C}=S_1\sqcup\cdots\sqcup S_N
\]
is a finite decomposition into $G_{\C}$-invariant locally closed sets and,
for each $j$, there is a geometric quotient
\[
 \pi_j:S_j\longrightarrow Z_j
\]
whose fibers are exactly the $G_{\C}$-orbits in $S_j$.  It is this last property, rather than merely separation of generic orbits
by rational invariants, that will be used below.

Since
\[
 W(\hat f_{\rm ext}\hat z)
 =\Lambda^q\hat A_{\rm ext}(\hat z)W(\hat z)
\]
and $\hat A_{\rm ext}(\hat z)\in G\subset G_{\C}$, the index $j$ for which
$W(\hat z)\in S_j$ is invariant under $\hat f_{\rm ext}$. By ergodicity,
there is therefore a fixed $j_0$ such that
\[
 W(\hat z)\in S_{j_0}
\]
for almost every $\hat z$.

Let $\pi_{j_0}:S_{j_0}\to Z_{j_0}$ be the corresponding map. Since
$\pi_{j_0}$ is constant on $G_{\C}$-orbits,
\[
 \pi_{j_0}\bigl(W(\hat f_{\rm ext}\hat z)\bigr)
 =
 \pi_{j_0}\bigl(W(\hat z)\bigr).
\]
Ergodicity then implies that $\pi_{j_0}(W(\hat z))$ is constant almost
everywhere. Because the fibers of $\pi_{j_0}$ are precisely the
$G_{\C}$-orbits, it follows that $W(\hat z)$ belongs almost everywhere to a
single complex $G_{\C}$-orbit, which we denote by ${\mathscr O}_{\C}$. Since
$W(\hat z)$ is real,
\[
 W(\hat z)\in{\mathscr O}_{\C}(\R)
\]
for almost every $\hat z$ where ${\mathscr O}_{\C}(\R)$ denotes the set of real points of the complex orbit ${\mathscr O}_{\C}$, that is, those subspaces in ${\mathscr O}_{\C}$ that belong to the real Grassmannian.

Choose a real point $v\in{\mathscr O}_{\C}(\R)$.  Let
$H_{v,\C}=(G_{\C})_v$ and let $H_{v,\R}=G_v$ be its real stabilizer. That is,  $H_{v,\mathbb{C}}$ and $H_{v,\mathbb{R}}$ are the subgroups fixing the point $v$ under the actions of the complex group $G_{\mathbb{C}}$ and the real group $G$, respectively.  The orbit
morphism $ G_{\C}\longrightarrow {\mathscr O}_{\C}$ given by
\[
  g\longmapsto g\cdot v,
\]
is surjective with kernel $H_{v,\C}$ and, in characteristic zero, identifies
the complex orbit with the algebraic homogeneous space
${\mathscr O}_{\C}\simeq G_{\C}/H_{v,\C}$.  Because $v$ is fixed by complex
conjugation and the action is defined over $\R$, the equations defining the
stabilizer are invariant under conjugation. Hence $H_{v,\C}$ is defined over
$\R$ and is the complexification of $H_{v,\R}$.  Thus
${\mathscr O}_{\C}$ is the complexification of the real homogeneous space
$G/H_{v,\R}$, and its set of real points is
$(G/H_{v,\R})(\R)={\mathscr O}_{\C}(\R)$.
Corollary~6.4 of Borel--Serre \cite{BorelSerre}, applied over the locally
compact field $\R$ to the homogeneous space $G/H_{v,\R}$, states that
$(G/H_{v,\R})(\R)$ is a union of only finitely many $G(\R)$-orbits. 
Consequently ${\mathscr O}_{\C}(\R)$ is a finite union of real $G$-orbits.
Each of these real orbits is preserved by the relation
\[
 W(\hat f_{\rm ext}\hat z)
 =\Lambda^q\hat A_{\rm ext}(\hat z)W(\hat z),
\]
because $\hat A_{\rm ext}(\hat z)\in G$. Hence the inverse images under
$W$ of these finitely many real orbits are invariant measurable sets.
Ergodicity therefore selects one real $G$-orbit, denoted by ${\mathscr O}$,
such that
\[
 W(\hat z)\in{\mathscr O}
\]
for almost every $\hat z$.

It remains to prove the estimate in \eqref{eq:relative-lift}. Fix
$v\in{\mathscr O}$. With its natural smooth manifold structure,
${\mathscr O}$ is the homogeneous space $G/G_v$, where $G_v$ is the subgroup
of $G$ that fixes $v$. Consequently, the orbit map
\[
 \alpha_v:G\longrightarrow{\mathscr O},
 \qquad
 \alpha_v(g)=gv,
\]
is a smooth submersion. A real algebraic $G$-orbit is a locally closed embedded smooth submanifold of $\mathscr M$. On compact subsets, an intrinsic smooth metric and the restriction of the ambient Grassmannian metric $d_{\Gr}$ are locally bi-Lipschitz. Thus smooth local sections are locally Lipschitz for the metric used in \eqref{eq:relative-lift}. By the local section theorem \cite[Theorem~4.26]{LeeSmooth}, there exist a neighborhood
$U_v\subset{\mathscr O}$ of $v$ and a smooth map
\[
 s_v:U_v\longrightarrow G
\]
such that
\[
 s_v(w)v=w\; \text{ and }\; s_v(v)=\Id.
\]
In other words, the map $s_v$ smoothly assigns to each neighboring point $w$ a specific group element in $G$ that transforms the base point $v$ precisely into $w$.
After shrinking $U_v$ if necessary, there are constants $L_v,M_v>0$ such that
\[
 \norm{s_v(V_2)-s_v(V_1)}
 \le L_v\,d_{\Gr}(V_1,V_2)
\]
and
\[
 \norm{s_v(V_1)^{-1}}\le M_v
\]
for all $V_1,V_2\in U_v$.

For $V_1,V_2\in U_v$, define
\[
 g=s_v(V_2)s_v(V_1)^{-1}.
\]
Since $s_v(V_i)v=V_i$, we have $gV_1=V_2$. Moreover,
\[
\begin{aligned}
 \norm{g-\Id}
 &=
 \norm{\bigl(s_v(V_2)-s_v(V_1)\bigr)s_v(V_1)^{-1}}\\
 &\le L_vM_v\,d_{\Gr}(V_1,V_2).
\end{aligned}
\]
Thus the required estimate holds whenever $V_1$ and $V_2$ belong to a common
neighborhood of this form.

Finally, choose compact sets $K_m\subset{\mathscr O}$ such that
\[
 \bigcup_m K_m={\mathscr O}.
\]
Fix $m$. By compactness of $K_m$, we can choose finitely many points
$v_1,\ldots,v_r$ and open sets
\[
 U'_{v_j}\subset U_{v_j} \; \text{ for }\; 1\le j\le r,
\]
such that
\[
 K_m\subset\bigcup_{j=1}^r U'_{v_j}
\]
and each $\overline{U'_{v_j}}$ is compact and contained in $U_{v_j}$.
Therefore one may choose $\rho_m>0$ so that, whenever
$V\in U'_{v_j}$ and
\[
 d_{\Gr}(V,V_i)<\rho_m \; \text{ for }\; i=1,2,
\]
both $V_1$ and $V_2$ belong to $U_{v_j}$. Set
\[
 C_m=\max_{1\le j\le r}L_{v_j}M_{v_j}.
\]
If $V\in K_m$ and $d_{\Gr}(V,V_i)<\rho_m$ for $i=1,2$, then
$V\in U'_{v_j}$ for some $j$, and the construction above gives
$g\in G$ such that
\[
 gV_1=V_2 \; \text{ and }\; 
 \norm{g-\Id}\le C_m\,d_{\Gr}(V_1,V_2).
\]
This is exactly \eqref{eq:relative-lift} and completes the proof of
Lemma~\ref{lem:orbit-lifting}.


\medskip{\bf Acknowledgments.}
L. Backes was partially supported by a CNPq-Brazil PQ fellowship under Grant No. 304806/2024-2. This work was also partially supported by FAPERGS - Programa Pesquisador Gaúcho - PqG under Grant No. 25/2551-0002627-0.
ChatGPT 5.6 was used to help draft Appendices \ref{app:reduction} and \ref{app:orbit-lemma} and to revise the manuscript.



\end{document}